\documentclass[11pt]{article}
\usepackage{mathrsfs}
\usepackage{latexsym}
\usepackage{amsmath,amssymb}
\usepackage[pdftex]{hyperref}
\usepackage{amsthm}
\usepackage{amsfonts,cite}
\usepackage[usenames]{color}
\usepackage{amssymb}
\usepackage{graphicx}
\usepackage{amsmath}
\usepackage{amsfonts}
\usepackage{amsthm}
\usepackage{mathrsfs}
\usepackage{dsfont}
\newcommand\be{\begin{equation}}
\newcommand\ee{\end{equation}}
\newcommand\ber{\begin{eqnarray}}
\newcommand\eer{\end{eqnarray}}
\newcommand\berr{\begin{eqnarray*}}
\newcommand\eerr{\end{eqnarray*}}
\newcommand\bea{\begin{eqnarray}}
\newcommand\eea{\end{eqnarray}}
\newcommand\ba{\begin{array}}
\newcommand\ea{\end{array}}

\newcommand{\nn}{\nonumber}

\newcommand{\bi}{\begin{itemize} }
  \newcommand{\ei}{\end{itemize} }

\ExecuteOptions{dvips} \marginparwidth 0pt \oddsidemargin 0.5 truecm
\evensidemargin 0.5 truecm \marginparsep 0pt \topmargin -25pt
\textheight 22 truecm \textwidth 15.0 truecm

\newtheoremstyle{mythm}{1.5ex plus 1ex minus .2ex}{1.5ex plus 1ex
minus .2ex}{\kai}{\parindent}{\song\bfseries}{}{1em}{}
\numberwithin{equation}{section}\numberwithin{figure}{section}

\newtheorem{theorem}{Theorem}[section]
\newtheorem{lemma}{Lemma}[section]

\newtheorem{proposition}{Proposition}[section]

\allowdisplaybreaks[4]
\begin{document}

\title{ Vortex Solutions for A Mixed Boundary-Value Problem in the Abelian-Higgs Model  with A Neutral Scalar Field}
\author{Guange Su\\Department of Mathematics and Physics \\Luoyang Institute of Science and Technology\\
Luoyang, Henan 471023, PR China\\Xiaosen Han\textsuperscript{*}\\School of Mathematics and Statistics\\Henan University\\
Kaifeng, Henan 475004, PR China}
\date{}
\maketitle

\begin{abstract}
Vortices represent a class of topological solitons arising in gauge theories coupled with complex scalar fields, holding significant importance across various domains of modern physics. In this paper we establish  the existence of vortex solutions for a mixed boundary-value problem derived from the Abelian-Higgs model incorporating a neutral scalar field, a system recently investigated by Eto, Peterson et al. [7]. By synergistically combining the shooting method with the Schauder fixed-point theorem, we derive sharp analytical criteria that delineate the Abelian vortex phase from the non-Abelian one. We also rigorously establish the monotonicity, uniform boundedness, and precise asymptotic behavior of the vortex profile functions. Our results provide rigorous confirmation of numerical observations regarding the phase boundary between these distinct vortex types.

\end{abstract}

\medskip
\begin{enumerate}

\item[]
{Keywords:} Abelian-Higgs model; Vortex solutions; Shooting method; Schauder fixed point theorem.

\end{enumerate}

\section{Introduction}
Vortex has important applications in many areas of modern physics including superfluids, superconductors~\cite{pm,sb}~and holographic superconductors~\cite{wyw,wsm}~as well as cosmic strings~\cite{va,hbm}~in high-energy physics. In many physical theories, vortices often play a crucial role in understanding key phenomena, greatly increasing their importance. Mathematically, it is a topological soliton under the complex scalar field coupled gauge field model~\cite{yy}.  Research on vortex solutions in classical and quantum field theory has yielded a rich body of work, encompassing examples such as the simple Kirchhoff vortex in fluid dynamics, magnetic vortices in Ginzburg-Landau superconductivity theory, gravitational vortices arising from the coupling of Einstein's theory of gravity with the Abelian-Higgs field theory, and non-Abelian vortices featured in the Glashow-Salam-Weinberg unified electroweak theory. The study of vortices not only aids in explaining physical phenomena but also frequently stimulates the development of novel mathematical ideas and techniques.

Magnetic vortex configurations were investigated by Abrikosov in the context of Ginzburg-Landau theory of superconductivity \cite{aa}. Based on this result, Nambu and 't Hooft proposed a theory of double superconductivity to explain the confinement of quarks \cite{ny,th,ms}. But it is well known that quark confinement is a non-Abelian gauge field problem. Therefore, using this model to study the quark color limit problem is definitely not perfect. Seiberg and Witten found that magnetic monopoles do condense in a softly broken $\mathcal{N}=2$ Super Yang-Mills theory \cite{sn}. Their condensation does in fact yield electric flux tubes, which is one of the main desired ingredients. However, the mechanism behind this remains purely Abelian and cannot describe any actual physical properties of Nature's quarks.  It therefore becomes important to study non-Abelian vortex dynamics. Non-Abelian vortices are standard solitonic vortex solutions with additional orientational moduli localized at their cores \cite{ha,ar}. These moduli are the Goldstone bosons resulting from the breaking of a non-Abelian global symmetry in the vortex core \cite{ckm,dht,ht,sy,smtg}. The important application background and the mathematical difficulties caused by nonlinearity have aroused people's strong research interest. In recent years, research on non-Abelian vortices has also yielded rich results \cite{chls,hl,ly1,ly2,ly3,gjk,tg,ge,xc}.

Inspired by Witten's superconducting string model\cite{we}, Shifman has proposed a particulary simple model which also contains non-Abelian vortices, free of any supersymmetry \cite{sm}. Eto, Peterson and others  subsequently modified Shifman's original model and derived a mixed boundary value problem on the semi-infinite interval $r\in[0,\infty)$ \cite{em}. In this paper, we solve this problem by rigorous mathematical analysis. Our difficulty lies in the existence of non-Abelian vortex which we use to describe those vortices possessing a non-zero condensate in the core. To overcome this difficulty, we combine shooting method and fixed-point theorem argument, which was first proposed by \cite{mbj,mbj1}. We analytically obtain the parameter region such that $\chi$-condensation at the vortex core condensation is non-zero. Namely, we clarify when the vortex becomes of the non-Abelian type. The vortex solutions essentially depend on the parameter $\alpha$. If $\alpha\geq\frac{1}{2}$, there is only Abelian vortex. However, if $0<\alpha<\frac{1}{2}$, non-Abelian vortex may appear. Furthermore, we show that the field profiles of the vortex solution to the equations of motion are monotonic functions.

The rest of our paper is organized as follows. In Section 2, we introduce the Abelian
Higgs model with neutral scalar field, and then state our main existence theorem for vortex solutions. In Sections 3, 4, and 5, we prove the existence and uniqueness of the respective profile functions, and we prove the existence of the vortex solution for each second order nonlinear ordinary differential equation via a shooting argument. Moreover, we obtain some qualitative properties of these solutions. Finally, in Section 6, we complete the proof of main existence theorem by using the Schauder fixed point theorem.

\section{Model and main results}
The system of interest is an Abelian-Higgs model with neutral scalar fields. The matter contents are the $U(1)$ gauge field $A_{\mu}$, the charged Higgs field $\phi$, and the neutral real scalar fields $\chi_{a}(a=1,\cdots,N)$. Following \cite{em}, we consider one of the simplest models which admits a nontrivial vortex with internal orientational moduli. The Lagrangian is
\begin{equation}\label{0.1}
\mathscr{L}=-\frac{1}{4}F_{\mu\nu}F^{\mu\nu}+D_{\mu}\phi(D^{\mu}\phi)^{*}+\partial_{\mu}\chi_{a}\partial^{\mu}\chi_{a}-V.
\end{equation}
Here field strength tensor $F_{\mu\nu}=\partial_{\mu}A_{\nu}-\partial_{\nu}A_{\mu}$, covariant derivative $D_{\mu}\phi=(\partial_{\mu}+ieA_{\mu})\phi$, $e$ is a $U(1)$ charge, and the potential is defined as
\begin{equation}\label{0.2}
V=\Omega^{2}\chi^{2}_{a}+\lambda(|\phi|^{2}+\chi^{2}_{a}-v^{2})^{2},
\end{equation}
where $\Omega^{2}>0$, $v^{2}>0$, $\lambda>0$ are coupling constants. Then the potential minimum reads
\ber
|\phi|=v, ~\vec{\chi}=0.
\eer
This is the Higgs branch in the sense that the $U(1)$ gauge symmetry is broken. On the other hand, the global $O(N)$ symmetry is unbroken.  Instead, $(x,y)=(v,0)$ is always a local minimum where all bosonic excitations are massive and
\ber
m^{2}_{\phi}=4v^{2}\lambda,~~m^{2}_{\gamma}=2e^{2}v^{2},~~m^{2}_{\chi}=\Omega^{2}.
\eer

When the gauge coupling is turned on, the gauge fields become dynamical fields. The full equations of motion resulting from Lagrangian (\ref{0.1}) with potential (\ref{0.2}) are
\begin{align}
\partial^{\mu}\partial_{\mu}A^{\nu}-\partial^{\nu}\partial_{\mu}A^{\mu}+ie(\phi D^{\nu}\phi^{\star}-\phi^{\star}D^{\nu}\phi)&=0,\label{0.4}\\
D_{\mu}D^{\mu}\phi+\frac{\partial V}{\partial\phi^{\star}}&=0,\label{0.5}\\
\partial_{\mu}\partial^{\mu}\chi^{i}+\frac{1}{2}\frac{\partial V}{\partial \chi^{i}}&=0.\label{0.6}
\end{align}

We seek solutions of this system within the following ansatz
\begin{equation}\label{0.7}
\phi=vf(r)e^{i\theta},~~\chi=vg(r)\boldsymbol{n},~~A^{i}=\frac{1}{e}\epsilon^{ij}\frac{x^{j}}{r^{2}}(1-a(r)),
\end{equation}
where $r$ and $\theta$ are the two-dimensional polar coordinates, $\boldsymbol{n}$ is an arbitrary constant unit vector, and $f(r)$, $g(r)$, $a(r)$ are the real profile functions. Substituting (\ref{0.7}) into (\ref{0.4})-(\ref{0.6}) yields a coupled system of
ordinary differential equations
\begin{align}
a''-\frac{1}{r}a'-\beta^{2}f^{2}a&=0,\label{1.1}\\
g''+\frac{1}{r}g'-\alpha^{2}g-\frac{1}{2}(f^{2}+g^{2}-1)g&=0,\label{1.2}\\
f''+\frac{1}{r}f'-\frac{a^{2}}{r^{2}}f-\frac{1}{2}(f^{2}+g^{2}-1)f&=0,\label{1.3}
\end{align}
subject to the following boundary conditions
\begin{align}
(a,g',f)\rightarrow&(1,0,0)~~~~~\text{for }r\rightarrow0,\label{1.4}\\
(a,g,f)\rightarrow&(0,0,1)~~~~~\text{for }r\rightarrow\infty.\label{1.5}
\end{align}
Here $\alpha=\frac{m_{\chi}}{m_{\phi}}>0$, $\beta=\frac{m_{\gamma}}{m_{\phi}}>0$. We call a vortex non-Abelian only if the $\chi$-condensation at the vortex center satisfies $g(0)>0$, otherwise we regard it as Abelian.
The main results of this paper are stated as follows.
\begin{theorem}\label{t1}
For any parameters $\alpha>0$ and $\beta>0$, the equations of motion of the Abelian-Higgs model with neutral scalar field defined by the Lagrangian density (\ref{0.1}) have a radially symmetric solution described by the ansatz (\ref{0.7}) so that $(a, g, f)$ satisfies the boundary conditions (\ref{1.4}) and (\ref{1.5}), $a, f$ are strictly monotone functions of $r$, $0<a(r)<1$, $0<f(r)<1$ for all $r>0$. In particularly,
\item(i) if $\alpha\geq\frac{1}{2}$, then there is only Abelian vortex, namely, $g(r)\equiv0$ for all $r>0$.
\item(ii) if $\alpha<\frac{1}{2}$, then there exists non-Abelian vortex. More precisely, $g(r)$ is strictly decreasing and for all $r>0$,
\ber
0<g(r)<\sqrt{1-2\alpha^{2}},~0<f^{2}(r)+g^{2}(r)<1.
\eer
Furthermore, there hold the asymptotic estimates
\begin{align}
a(r)&=1+O(r^{2}),\\
g(r)&=g(0)+O(r^{2}),\\
f(r)&=O(r),
\end{align}
for $r\rightarrow0$, where $0<g(0)<\sqrt{1-2\alpha^{2}}$, and
\begin{align}
a(r)&=O(r^{\frac{1}{2}}\exp{(-\beta(1-\varepsilon)r)}),\\
g(r)&=O(\exp{(-\alpha(1-\varepsilon)r)}),\\
f(r)&=1+O(\exp{(-\gamma(1-\varepsilon)r)}),
\end{align}
for $r\rightarrow\infty$, where $0<\varepsilon<1$ is arbitrary and the exponents $\gamma=\min\{1,2\alpha,2\beta\}$.
\end{theorem}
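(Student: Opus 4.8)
The plan is to decouple the system \eqref{1.1}--\eqref{1.3} by treating it as a fixed-point problem for the triple $(a,g,f)$ and handling each ODE by a shooting argument in which the "shooting parameter" is the value of the relevant profile at the origin. Concretely: fix a continuous pair $(\tilde f,\tilde a)$ on $[0,\infty)$ taking values in $[0,1]$ with $\tilde a(0)=1$, $\tilde f(0)=0$; plug these into \eqref{1.2} and show that there is a unique solution $g=g[\tilde f,\tilde a]$ with $g'(0)=0$ satisfying the correct decay at infinity, with $0\le g(r)<\sqrt{\max\{0,1-2\alpha^2\}}$ and $g$ monotone. Similarly, feeding $(g,\tilde a)$ into \eqref{1.3} produces $f$ with $f(0)=0$, $0<f<1$, $f$ increasing; and feeding $f$ into the linear-in-$a$ equation \eqref{1.1} produces $a$ with $a(0)=1$, $0<a<1$, $a$ decreasing. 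This defines a map $\mathcal{F}\colon(\tilde f,\tilde a)\mapsto(f,a)$ on a suitable closed convex bounded subset of a weighted $C^0$ (or $C^0\times C^0$) space, and Theorem~\ref{t1} follows once $\mathcal{F}$ is shown to be continuous and compact and the set is invariant, so that Schauder applies and a fixed point is a genuine solution of the full coupled system.

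The individual shooting arguments are the technical heart and would be carried out in Sections 3--5 as promised. For each scalar equation one performs the local analysis at $r=0$ (Frobenius-type expansion giving $a=1+O(r^2)$, $g=g(0)+O(r^2)$, $f=O(r)$), establishes local existence and continuous dependence on the shooting datum, and then defines the usual dichotomy: the set of initial values for which the solution undershoots (say, $g$ crosses zero, or $f$ exceeds $1$) and the set for which it overshoots (blows up, or leaves the admissible strip). One shows both sets are open and nonempty, hence by connectedness of the interval of admissible initial data there is a boundary value for which the solution exists globally and lands on the required asymptotic state; a comparison/energy argument then pins down monotonicity and the strict bounds $0<a,f<1$, $0\le g<\sqrt{1-2\alpha^2}$. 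The decay rates at infinity come from linearizing each equation about the asymptotic values $(0,0,1)$ — the modified-Bessel behavior of \eqref{1.1} gives the $r^{1/2}e^{-\beta(1-\varepsilon)r}$ factor, and \eqref{1.2}, \eqref{1.3} linearize to yield masses $\alpha$ and $\gamma=\min\{1,2\alpha,2\beta\}$ respectively — combined with a barrier/comparison argument to upgrade the linear prediction to a genuine pointwise bound with the $(1-\varepsilon)$ loss.

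The Abelian/non-Abelian dichotomy in parts (i)--(ii) comes out of the $g$-equation alone. Writing \eqref{1.2} as $g''+\frac1r g'=\left(\alpha^2-\tfrac12+\tfrac12(f^2+g^2)\right)g$, one sees that if $\alpha\ge\tfrac12$ the right-hand coefficient is nonnegative whenever $g\ge0$, which forces $g\equiv0$ by a maximum-principle/integration-by-parts argument (no positive solution decaying at infinity can exist), giving (i). When $\alpha<\tfrac12$ the coefficient $\alpha^2-\tfrac12$ is negative, so near the core a positive solution is admissible; the shooting then produces $g(0)\in(0,\sqrt{1-2\alpha^2})$, and the bound $g<\sqrt{1-2\alpha^2}$ follows because at an interior maximum of $g$ one needs $\alpha^2-\tfrac12+\tfrac12(f^2+g^2)\le0$, hence $g^2\le 1-2\alpha^2-f^2<1-2\alpha^2$, while $f^2+g^2<1$ follows from the joint maximum principle for the combination $f^2+g^2-1$ using \eqref{1.2}--\eqref{1.3}.

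I expect the main obstacle to be the fixed-point step rather than the shooting: one must choose the function space and the invariant set so that (a) the shooting maps produce outputs with \emph{exactly} the regularity and decay needed to re-enter the set, (b) continuous dependence of the shot solution on the frozen coefficients $(\tilde f,\tilde a)$ holds in that topology — delicate because the admissible shooting datum (e.g. $g(0)$) itself depends on $(\tilde f,\tilde a)$, and uniqueness of that datum is what makes the selection single-valued and continuous — and (c) compactness, which on the non-compact domain $[0,\infty)$ requires the uniform exponential decay estimates to get equicontinuity/tightness. Establishing uniqueness of the shooting parameter for each frozen coefficient (so that $\mathcal{F}$ is well-defined as a genuine map) is the crux, and is where the monotonicity and strict sign information proved in Sections 3--5 will be used decisively.
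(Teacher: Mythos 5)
Your proposal follows essentially the same strategy as the paper: a shooting dichotomy (open overshoot/undershoot parameter sets plus connectedness) for each scalar ODE, barrier/comparison functions for the exponential decay rates, maximum principles for the Abelian/non-Abelian dichotomy and for $f^2+g^2<1$, and a Schauder fixed point to close the coupling --- the only organizational differences being that the paper iterates on $f$ alone (since both the $a$- and $g$-equations depend only on $f$, while the $f$-equation depends on $(a,g)$) and shoots on the leading Taylor coefficients of $a$ and $f$ at the origin rather than on their values there, which are prescribed by the boundary conditions. One caveat: your maximum-principle argument for part (i) actually requires $\alpha^2\ge\tfrac12$, i.e.\ $\alpha\ge\sqrt{1/2}$ (for $\alpha=\tfrac12$ the coefficient $\alpha^2-\tfrac12+\tfrac12(f^2+g^2)$ is negative near the core where $f$ and $g$ are small), which matches the paper's own computation and its stated phase boundary $\alpha=\sqrt{1/2}$; the ``$\alpha\ge\tfrac12$'' in the theorem statement, which you reproduced, is a typo that neither your argument nor the paper's establishes.
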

It is easily seen that the phase boundary between the Abelian and non-Abelian phases stands to the line $\alpha=\sqrt{\frac{1}{2}}$, which rigorously confirms the numerical results obtained in \cite{em}. The steps to prove Theorem \ref{t1} will be described as follows. Firstly, fixed a function $f(r)$ satisfying the boundary condition $f(0)=0$, $f(\infty)=1$ and some additional properties, we obtain a unique decreasing solution $a(r)$ with $a(0)=1$, $a(\infty)=0$ via applying shooting method to (\ref{1.1}). For the fixed $f(r)$, we use shooting method applied to (\ref{1.2}) to get a unique $g(r)$ with $g'(0)=0$, $g(\infty)=0$. Next, substituting the obtained functions $a(r)$ and $g(r)$ into (\ref{1.3}), we get a unique increasing function $\tilde{f}(r)$ satisfying (\ref{1.3}) and $\tilde{f}(0)=0$, $\tilde{f}(\infty)=1$. Technically, the process of the three-step shooting allow us to define a mapping $T$, which sends $f$ to $\tilde{f}$ : $T(f)=\tilde{f}$. We show that $T$ maps some convex bounded set into itself and is compact. Therefore, Schauder fixed point theorem guarantees that $T$ has a fixed point $f(r)$. In this way, we get a solution to the boundary value problem (\ref{1.1})-(\ref{1.5}). The above steps will be presented in the following sections in details.

\section{Existence and uniqueness of $a(r)$}
In this section, we are to show the existence and uniqueness of $a(r)$ for a suitably fixed function $f(r)$ via a single parameter shooting method. The result of this section can be described as the following proposition.
\begin{proposition}\label{l1.1}
Given function $f(r)\in C[0,\infty)$ such that $f$ is increasing,
$r^{-1}f\leq M_{1}$ for $r\leq1$ $(M_{1}$ is a positive constant), $f(0)=0$ and $f(\infty)=1$, then we can find a unique function $a(r)$ satisfying (\ref{1.1}) with boundary condition $a(0)=1$, $a(\infty)=0$. Moreover, $a(r)$ is strictly decreasing, $0<a(r)<1$ for all $r>0$, $|a-1|\leq M_{2}r^{2}$ for any $r\leq1$, where $M_{2}$ is a positive constant.
\end{proposition}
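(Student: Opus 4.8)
The plan is to treat \eqref{1.1} as a linear second-order ODE in $a$ (with $f$ fixed) and run a one-parameter shooting argument in the initial slope data. First I would rewrite \eqref{1.1} in self-adjoint form: multiplying by $r^{-1}$ gives $\bigl(r^{-1}a'\bigr)' = \beta^2 r^{-1} f^2 a$, so $r^{-1}a'$ is nondecreasing wherever $a>0$. Near $r=0$ the indicial analysis of $a'' - r^{-1}a' = 0$ shows the two homogeneous behaviours are $a\sim\text{const}$ and $a\sim r^2$; the bound $r^{-1}f\le M_1$ on $[0,1]$ makes the potential $\beta^2 f^2$ of order $r^2$ near the origin, so there is a one-parameter family of local solutions with $a(0)=1$, $a'(0)=0$ and $a(r) = 1 + O(r^2)$ — this will also yield the claimed estimate $|a-1|\le M_2 r^2$ on $[0,1]$ by a Picard/contraction argument on the integral equation $a(r) = 1 + \beta^2\int_0^r s^{-1}\!\!\int_0^s \tau f(\tau)^2 a(\tau)\,\dd\tau\,\dd s$. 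Actually, since the equation is linear and homogeneous in $a$, the solution satisfying $a(0)=1$, $a'(0)=0$ is \emph{unique} and globally defined; there is genuinely no shooting parameter at the origin, so the real work is showing this particular solution has the right behaviour at $r=\infty$.

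So the substantive steps are, in order: (1) \textbf{Monotonicity and sign.} Since $(r^{-1}a')' = \beta^2 r^{-1}f^2 a\ge 0$ as long as $a\ge 0$, and $r^{-1}a'\to 0$ as $r\to 0$, we get $a'\ge 0$ would force $a$ increasing — instead I argue $a'<0$ for $r>0$: suppose $a'(r_0)\ge 0$ for some first $r_0>0$; then $r^{-1}a'$ being nondecreasing while $a>0$ forces $a'>0$ for all $r>r_0$, so $a$ increases away from $1$ and stays positive, hence $r^{-1}a'$ keeps increasing, $a'$ grows at least linearly, and $a\to\infty$ — this is consistent as an ODE solution but I must rule it out by comparison with the $r\to\infty$ requirement. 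The cleaner route: show directly that the solution with $a(0)=1,a'(0)=0$ satisfies $0<a<1$ and $a'<0$ on $(0,\infty)$ by a maximum-principle/barrier argument — if $a$ ever reached $1$ again or exceeded it, convexity-type inequalities from $(r^{-1}a')'\ge 0$ give a contradiction; and $a$ cannot touch $0$ since at a first zero $a'<0$ strictly (uniqueness for the linear ODE) but $r^{-1}a'$ nondecreasing from $0$ forces $a'\le 0$ throughout with the integral of $a'$ from that point staying $\le 0$, contradiction with $a\ge 0$. (2) \textbf{Decay at infinity.} Once $0<a<1$ is known, use $f(\infty)=1$: pick $R$ with $f^2\ge (1-\vep)^2$ for $r\ge R$, then on $[R,\infty)$ compare $a$ with the modified-Bessel-type solutions of $a''-r^{-1}a'-\beta^2(1-\vep)^2 a=0$, whose decaying solution behaves like $r^{1/2}e^{-\beta(1-\vep)r}$; a comparison/Gronwall argument gives $a(r)\to 0$ with exactly that rate, and monotone $a$ bounded below by $0$ forces the limit to exist and be $0$.

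The \textbf{main obstacle} I anticipate is step (1): because \eqref{1.1} is linear and homogeneous, there is no free shooting parameter to adjust (rescaling $a$ rescales $a(0)$), so the usual "shoot and match" dichotomy does not directly apply — the heart of the matter is proving that the \emph{unique} normalized solution $a(0)=1$, $a'(0)=0$ automatically decays rather than blows up, and this must be extracted from the structure $(r^{-1}a')'=\beta^2 r^{-1}f^2 a$ together with global positivity. I would handle it by first establishing $a>0$ and $a'<0$ on a maximal interval via the self-adjoint-form monotonicity of $r^{-1}a'$, then showing the maximal interval is all of $(0,\infty)$, and finally obtaining $a(\infty)=0$ from the lower bound $a>0$, the monotone decrease, and the exponential barrier at infinity; uniqueness of $a$ is then immediate from linearity (any two solutions with the stated boundary data differ by a solution of the homogeneous problem that vanishes at $0$ to second order and decays at $\infty$, forcing it to be identically zero by the maximum principle). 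The quantitative bounds $r^{-1}f\le M_1 \Rightarrow |a-1|\le M_2 r^2$ and the $r^{1/2}e^{-\beta(1-\vep)r}$ tail are then routine consequences of the integral-equation estimate near $0$ and the Bessel comparison near $\infty$, respectively.
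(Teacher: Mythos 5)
There is a genuine gap, and it sits exactly where you located ``the main obstacle'': your claim that, by linearity, ``the solution satisfying $a(0)=1$, $a'(0)=0$ is unique \dots{} there is genuinely no shooting parameter at the origin'' is false. The origin is a \emph{singular} point of (\ref{1.1}); the two indicial behaviours of $a''-r^{-1}a'=0$ are $1$ and $r^{2}$, and \emph{both} satisfy $a'(0)=0$. Hence the data $a(0)=1$, $a'(0)=0$ leave a one-parameter family of local solutions $a(r)=1-Br^{2}+O(r^{4})$, parametrized by $B=-\lim_{r\to0}a'(r)/(2r)$. The paper's proof is a shooting argument over precisely this parameter: setting $\tilde a=1-a$ it studies (\ref{2.2}) with $\tilde a\sim Br^{2}$ and the trichotomy $\mathcal{B}^{-},\mathcal{B}^{0},\mathcal{B}^{+}$. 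Your integral equation $a(r)=1+\beta^{2}\int_{0}^{r}s^{-1}\int_{0}^{s}(\cdots)\,\dd\tau\,\dd s$ silently imposes $r^{-1}a'\to0$ as $r\to 0$, i.e.\ it selects $B=0$; Lemma \ref{2.0} of the paper shows that for $B$ at or near $0$ one has $\tilde a<0$, i.e.\ $a>1$ and increasing, so the particular ``normalized'' solution you single out actually \emph{violates} the boundary condition at infinity.

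Consequently the ``contradictions'' you invoke in step (1) do not exist: a solution with $a(0)=1$, $a'(0)=0$ that exceeds $1$ and blows up is perfectly consistent with $(r^{-1}a')'=\beta^{2}r^{-1}f^{2}a\ge0$ (this is what happens for small $B$), and a solution that decreases through $0$ also occurs (large $B$, seen via the rescaling $r=B^{-1/2}t$). The correct argument must exploit this dichotomy rather than deny it: one shows the two bad sets of $B$-values are open and nonempty, and connectedness of $(0,\infty)$ yields a $B$ for which $0<a<1$ and $a'<0$ globally; the limit $a(\infty)=0$ then follows from the monotonicity of $r^{-1}a'$ together with $f(\infty)=1$, as in Lemma \ref{l3.5}. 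Your remaining ingredients --- the bound $|a-1|\le M_{2}r^{2}$ from a Picard iteration, the barrier $r^{1/2}e^{-\beta(1-\varepsilon)r}$ at infinity, and uniqueness via a maximum principle applied to the difference of two solutions --- are sound and coincide with the paper's, but they cannot rescue the existence step as you have set it up.
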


To proceed, we define $\tilde{a}(r)=1-a(r)$ and consider the differential equation
\begin{equation}\label{2.1}
\tilde{a}''-\frac{1}{r}\tilde{a}'+\beta^{2}f^{2}(1-\tilde{a})=0,
\end{equation}
which can be formally transformed into the integral equation
\begin{equation}\label{2.2}
\tilde{a}(r)=Br^{2}+\frac{\beta^{2}}{2}\int_{0}^{r}s\left(\frac{r^{2}}{s^{2}}-1\right)f^{2}(s)(\tilde{a}(s)-1)\mathrm{d}s.
\end{equation}
Here $B>0$ is an arbitrary constant. We first demonstrate that for any $B>0$, there exists a local solution to the equation (\ref{2.1}) satisfying the initial condition $\tilde{a}(0)=0$.

\begin{lemma}\label{l1}
Let $f(r)$ be given in Proposition \ref{l1.1} and $B>0$, the equation (\ref{2.1}) admits a unique solution near $r=0$ satisfying $\tilde{a}(0)=0$, more precisely,
\ber\label{2.3}
\tilde{a}(r)=Br^{2}+O(r^{4})~(r\rightarrow0).
\eer
\end{lemma}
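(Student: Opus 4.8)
The plan is to recast the problem as a fixed-point problem for the integral operator suggested by \eqref{2.2} on a small interval $[0,\delta]$ and apply the contraction mapping principle. First I would fix $B>0$ and define, on the Banach space $X_{\delta}=C[0,\delta]$ with the sup-norm, the operator
\[
(\mathcal{A}\tilde{a})(r)=Br^{2}+\frac{\beta^{2}}{2}\int_{0}^{r}s\left(\frac{r^{2}}{s^{2}}-1\right)f^{2}(s)\bigl(\tilde{a}(s)-1\bigr)\,\mathrm{d}s .
\]
The first task is to check that $\mathcal{A}$ is well defined: the kernel $s(r^{2}/s^{2}-1)=r^{2}/s-s$ is integrable near $s=0$ because $r^{2}/s$ is, and combined with the hypothesis $f(s)\le M_{1}s$ for $s\le 1$ from Proposition \ref{l1.1}, the integrand is $O(s)$ near $0$; hence the integral converges and $\mathcal{A}\tilde{a}\in C[0,\delta]$. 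I would also record the pointwise bound $|(\mathcal{A}\tilde{a})(r)|\le Br^{2}+C\,\delta^{2}\bigl(1+\|\tilde{a}\|_{\infty}\bigr)$ for $r\le\delta\le 1$, where $C$ depends only on $\beta$ and $M_{1}$; this comes from $\int_{0}^{r}(r^{2}/s-s)f^{2}(s)\,\mathrm{d}s\le M_{1}^{2}\int_{0}^{r}(r^{2}/s-s)s^{2}\,\mathrm{d}s=O(r^{4})$.

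Next I would choose the closed ball $\mathcal{B}=\{\tilde{a}\in C[0,\delta]:\|\tilde{a}\|_{\infty}\le 1\}$ and verify, using the bound above, that for $\delta$ small enough (depending on $B$, $\beta$, $M_{1}$) we have $\mathcal{A}:\mathcal{B}\to\mathcal{B}$. For the contraction estimate, note the difference is linear in $\tilde{a}$:
\[
(\mathcal{A}\tilde{a}_{1})(r)-(\mathcal{A}\tilde{a}_{2})(r)=\frac{\beta^{2}}{2}\int_{0}^{r}\left(\frac{r^{2}}{s}-s\right)f^{2}(s)\bigl(\tilde{a}_{1}(s)-\tilde{a}_{2}(s)\bigr)\,\mathrm{d}s,
\]
so $\|\mathcal{A}\tilde{a}_{1}-\mathcal{A}\tilde{a}_{2}\|_{\infty}\le C\delta^{2}\|\tilde{a}_{1}-\tilde{a}_{2}\|_{\infty}$, which is a contraction once $C\delta^{2}<1$. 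Shrinking $\delta$ once more if necessary, the Banach fixed-point theorem yields a unique $\tilde{a}\in\mathcal{B}$ with $\mathcal{A}\tilde{a}=\tilde{a}$ on $[0,\delta]$. A routine bootstrap — differentiating the integral equation twice — shows this fixed point is $C^{2}$ on $(0,\delta]$ and solves \eqref{2.1} with $\tilde{a}(0)=0$; conversely any $C^{2}$ solution of \eqref{2.1} with $\tilde{a}(0)=0$ and bounded near $0$ satisfies the integral equation (integrate \eqref{2.1} twice, absorbing the two constants of integration into $Br^{2}$ and using boundedness to kill the $\log r$ term), so the solution is unique in that class.

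Finally, the asymptotic expansion \eqref{2.3} follows immediately: plugging the fixed point (with $\|\tilde{a}\|_{\infty}\le 1$) back into the formula for $\mathcal{A}\tilde{a}$ gives $\tilde{a}(r)=Br^{2}+\frac{\beta^{2}}{2}\int_{0}^{r}(r^{2}/s-s)f^{2}(s)(\tilde{a}(s)-1)\,\mathrm{d}s$, and by the $O(r^{4})$ bound established above the integral term is $O(r^{4})$ as $r\to 0$. The main obstacle is purely technical: handling the mild singularity of the kernel $r^{2}/s$ at $s=0$ and the apparent singularity of \eqref{2.1} at $r=0$. Both are controlled by the hypothesis $r^{-1}f\le M_{1}$ near the origin, which supplies the extra power of $s$ needed to make every integral converge and to force the second constant of integration to vanish; once that is in place the argument is a standard small-interval contraction.
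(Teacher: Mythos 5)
Your proposal is correct and takes essentially the same route as the paper: the paper also reformulates (\ref{2.1}) as the integral equation (\ref{2.2}) and solves it by Picard iteration on a small interval $[0,\delta]$, with the $O(r^{4})$ bound on the integral term giving (\ref{2.3}); your contraction-mapping formulation is just the abstract version of that iteration. One cosmetic slip: the homogeneous solutions of $u''-u'/r=0$ are $1$ and $r^{2}$ rather than $\log r$, so in the uniqueness step it is the condition $\tilde{a}(0)=0$ together with the prescribed coefficient $B$ of $r^{2}$ (not boundedness killing a logarithm) that fixes the two constants of integration.
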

\begin{proof}
We employ Picard iteration to solve the integral equation (\ref{2.2}). To begin with, set
\begin{align}
\tilde{a}_{0}(r)&=Br^{2},\\
\tilde{a}_{n+1}(r)&=Br^{2}+\frac{\beta^{2}}{2}\int_{0}^{r}s\left(\frac{r^{2}}{s^{2}}-1\right)f^{2}(s)(\tilde{a}_{n}(s)-1)\mathrm{d}s,~~~n=0,1,2,...
\end{align}
Since $f(r)\leq M_{1}r$ for any $r\leq1$, we have
\begin{align}\label{2.4}
|\tilde{a}_{1}-\tilde{a}_{0}|&\leq\frac{\beta^{2}}{2}\int_{0}^{r}\frac{r^{2}}{s}M_{1}^{2}s^{2}(Bs^{2}+1)\mathrm{d}s\notag\\
&\leq\frac{\beta^{2}}{4}M_{1}^{2}r^{4}+\frac{\beta^{2}}{8}M_{1}^{2}Br^{6},\\
&\leq\frac{3\beta^{2}}{8}M_{1}^{2}r^{4},~~~r\leq\min{\left\{1,\sqrt{\frac{1}{B}}\right\}}.\notag
\end{align}
Then
\begin{equation}
|\tilde{a}_{1}|\leq Br^{2}+\frac{3\beta^{2}}{8}M_{1}^{2}r^{4}\leq M_{*}r^{2},~~~r\leq\min{\left\{1,\sqrt{\frac{1}{B}}\right\}},\notag
\end{equation}
where $M_{*}=B+\frac{3\beta^{2}}{8}M_{1}^{2}$. Similarly, for any $r\leq\min{\left\{1,\sqrt{\frac{1}{B}},\sqrt{\frac{1}{M_{*}}}\right\}}$, we obtain
\begin{align}
|\tilde{a}_{2}-\tilde{a}_{0}|&\leq\frac{\beta^{2}}{2}\int_{0}^{r}\frac{r^{2}}{s}M_{1}^{2}s^{2}(M_{*}s^{2}+1)\mathrm{d}s\notag\\
&\leq\frac{\beta^{2}}{4}M_{1}^{2}r^{4}+\frac{\beta^{2}}{8}M_{1}^{2}M_{*}r^{6}\notag\\
&\leq\frac{3\beta^{2}}{8}M_{1}^{2}r^{4},\notag
\end{align}
so that $|\tilde{a}_{2}|\leq M_{*}r^{2}$. As a consequence, setting
\begin{equation}\label{00}
\delta=\min{\left\{1,\sqrt{\frac{1}{B}},\sqrt{\frac{1}{M_{*}}}\right\}}, %\triangleq
\end{equation}
we see that for any $r\leq\delta$,
\begin{align}
|\tilde{a}_{n}-\tilde{a}_{0}|&\leq\frac{3\beta^{2}}{8}M_{1}^{2}r^{4},~~~n=1,2,3,...\label{2.5}\\
|\tilde{a}_{n}|&\leq M_{*}r^{2},~~~n=1,2,3,...\label{2.6}
\end{align}

On the other hand, a direct calculation shows that for $r\leq\delta$,
\begin{align}
|\tilde{a}_{n+1}-\tilde{a}_{n}|&\leq\frac{\beta^{2}}{2}\int_{0}^{r}s\left(\frac{r^{2}}{s^{2}}-1\right)f^{2}(s)|\tilde{a}_{n}(s)-\tilde{a}_{n-1}(s)|\mathrm{d}s\notag\\
&\leq\frac{\beta^{2}}{2}\int_{0}^{r}\frac{r^{2}}{s}|\tilde{a}_{n}(s)-\tilde{a}_{n-1}(s)|\mathrm{d}s\notag\\
&\leq \frac{3\beta^{2}}{8}M_{1}^{2}r^2\left(\frac{\beta^{2}}{2}\right)^{n}\frac{2r^{2(n+1)}}{2(n+1)!!}\notag\\
&\leq\left(\frac{\beta^{2}}{2}\right)^{n}\frac{2r^{2(n+1)}}{2(n+1)!!} ~~~n=0,1,2,3,...\notag
\end{align}
Hence,
\ber\label{2.7}
\sum_{n=0}^{\infty}(\tilde{a}_{n+1}-\tilde{a}_{n})\leq\sum_{n=0}^{\infty}\left(\frac{\beta^{2}}{2}\right)^{n}\frac{2r^{2(n+1)}}{2(n+1)!!},~~r\leq\delta.
\eer
In view of (\ref{2.4}) and (\ref{2.7}), it is clear that the series $\tilde{a}_{0}(r)+\sum\limits_{n=1}^{\infty}(\tilde{a}_{n}(r)-\tilde{a}_{n-1}(r))$ converges uniformly on $[0,\delta]$, which implies that the sequence $\{\tilde{a}_{n}\}$ converges. We may assume $\lim\limits_{n\rightarrow\infty}\tilde{a}_{n}(r)=\tilde{a}(r)$, so that $\tilde{a}(r)$ is a solution to problem (\ref{2.1}) over $[0,\delta]$. Moreover, $\tilde{a}(r)$ can be extended uniquely to a maximal interval $[0, R_{B})$ such that either $R_{B}=\infty$ or $|\tilde{a}(r)|\rightarrow\infty$ as $r\rightarrow R_{B}$. Taking $n\rightarrow\infty$ in (\ref{2.5}), we obtain (\ref{2.3}).
\end{proof}

To invoke a shotting argument, we denote the solution as $\tilde{a}(B,r)$ and set
\ber
\begin{aligned}
\mathcal{B}^{-}=&\big\{B>0~|~\text{there exists a}~r_{*}\in(0,R_{B})~\text{such that}
~\tilde{a}'(B,r_{*})<0~\text{and for any}~r\in(0,r_{*}],0<\tilde{a}(B,r)<1\big\},\notag\\
\mathcal{B}^{0}=&\big\{B>0~|~\tilde{a}'(B,r)\geq0\mbox{ and } 0<\tilde{a}(B,r)<1~\text{for all}~r>0\big\},\notag\\
\mathcal{B}^{+}=&\big\{B>0~|~\mbox{there exists a}~r^{*}\in(0,R_{B})~\text{such that}~\tilde{a}(B,r^{*})=1~\text{and for any}~r\in(0,r^{*}], \tilde{a}'(B,r)\geq0\big\}.
\end{aligned}
\eer

\begin{lemma}\label{l.0}
$\mathcal{B}^{-}\cap\mathcal{B}^{0}=\mathcal{B}^{0}\cap\mathcal{B}^{+}=\mathcal{B}^{-}\cap\mathcal{B}^{+}=\emptyset$ and $\mathcal{B}^{-}\cup\mathcal{B}^{0}\cup\mathcal{B}^{+}=(0,\infty)$.
\end{lemma}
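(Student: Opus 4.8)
\section*{Proof proposal for Lemma \ref{l.0}}

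The plan is to establish the claimed partition of $(0,\infty)$ in two parts: first the pairwise disjointness of $\mathcal{B}^{-}$, $\mathcal{B}^{0}$, $\mathcal{B}^{+}$, which is essentially a bookkeeping argument from the definitions, and then the exhaustiveness $\mathcal{B}^{-}\cup\mathcal{B}^{0}\cup\mathcal{B}^{+}=(0,\infty)$, which requires a short qualitative analysis of the solution $\tilde{a}(B,r)$. Throughout I would work with the local solution furnished by Lemma \ref{l1}, recalling that near $r=0$ we have $\tilde{a}(B,r)=Br^{2}+O(r^{4})$, so $\tilde{a}(B,r)>0$ and $\tilde{a}'(B,r)>0$ on some initial interval $(0,\delta_{B}]$; this is the seed of every argument below.

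For the disjointness, I would argue directly. If $B\in\mathcal{B}^{0}$ then $\tilde{a}'(B,r)\geq 0$ for \emph{all} $r>0$, which contradicts the existence of an $r_{*}$ with $\tilde{a}'(B,r_{*})<0$ required for $B\in\mathcal{B}^{-}$; hence $\mathcal{B}^{-}\cap\mathcal{B}^{0}=\emptyset$. Similarly, $B\in\mathcal{B}^{0}$ forces $0<\tilde{a}(B,r)<1$ for all $r>0$, which rules out the existence of an $r^{*}$ with $\tilde{a}(B,r^{*})=1$, so $\mathcal{B}^{0}\cap\mathcal{B}^{+}=\emptyset$. The only subtle case is $\mathcal{B}^{-}\cap\mathcal{B}^{+}=\emptyset$: suppose $B$ lies in both, with witnesses $r_{*}$ (first place the derivative goes negative, with $\tilde{a}<1$ up to there) and $r^{*}$ (first place $\tilde{a}$ hits $1$, with $\tilde{a}'\geq 0$ up to there). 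If $r^{*}\le r_{*}$, then on $(0,r_{*}]$ we would need both $\tilde a(B,r^*)=1$ and $0<\tilde a(B,r)<1$, a contradiction at $r=r^{*}$. If $r_{*}<r^{*}$, then $\tilde{a}'(B,r_{*})<0$ contradicts $\tilde{a}'(B,r)\geq 0$ on $(0,r^{*}]$. Either way we reach a contradiction, so the three sets are pairwise disjoint.

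For exhaustiveness, fix any $B>0$ and consider the behavior of $\tilde{a}(B,\cdot)$ on its maximal interval $[0,R_{B})$. By Lemma \ref{l1}, $\tilde{a}>0$ and $\tilde{a}'>0$ just to the right of $0$. I would distinguish cases according to whether $\tilde a$ ever reaches $1$ and whether $\tilde a'$ ever becomes negative. Define $r^{*}=\inf\{r>0:\tilde a(B,r)=1\}$ and $r_{*}=\inf\{r>0:\tilde a'(B,r)<0\}$ (each $=+\infty$ if the corresponding set is empty). If $r^{*}<\infty$ and $r^{*}\le r_{*}$, then on $(0,r^{*})$ we have $0<\tilde a(B,r)<1$ and $\tilde a'(B,r)\ge 0$ by minimality, so $B\in\mathcal{B}^{+}$ (one should check $r^{*}<R_{B}$, which follows since $\tilde a$ stays bounded by $1$ up to $r^{*}$ so no blow-up occurs there). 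If $r_{*}<\infty$ and $r_{*}<r^{*}$, then on $(0,r_{*}]$ we have $0<\tilde a(B,r)<1$ and at $r_{*}$ the derivative is $\le 0$; a continuity/limiting argument upgrades this to $\tilde a'(B,r_{*})<0$ (or else pushes $r_*$ slightly right to a genuine strict sign), so $B\in\mathcal{B}^{-}$. Finally, if $r^{*}=r_{*}=\infty$, then $\tilde a'(B,r)\ge 0$ for all $r>0$ and $0<\tilde a(B,r)<1$ for all $r>0$ (the upper bound because $r^{*}=\infty$, the lower because $\tilde a$ starts positive and is nondecreasing); moreover $R_{B}=\infty$ since $\tilde a$ stays in $(0,1)$ and hence cannot blow up. Thus $B\in\mathcal{B}^{0}$. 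In all cases $B$ lands in one of the three sets, giving $\mathcal{B}^{-}\cup\mathcal{B}^{0}\cup\mathcal{B}^{+}=(0,\infty)$.

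I expect the main obstacle to be the exhaustiveness direction, specifically the borderline bookkeeping at $r_{*}$: the set $\mathcal{B}^{-}$ demands a point with \emph{strictly} negative derivative while $\tilde a$ is still in $(0,1)$, so one has to rule out the degenerate scenario where $\tilde a'$ touches $0$ from above without ever going negative before $\tilde a$ reaches $1$. Here I would use the ODE \eqref{2.1} itself: at a putative interior critical point $\bar r$ with $\tilde a'(B,\bar r)=0$ and $0<\tilde a(B,\bar r)<1$, \eqref{2.1} gives $\tilde a''(B,\bar r)=-\beta^{2}f^{2}(\bar r)\bigl(1-\tilde a(B,\bar r)\bigr)<0$ (wherever $f(\bar r)\ne 0$), so such a critical point is a strict local maximum and $\tilde a'$ does become negative immediately afterward — pushing us into $\mathcal{B}^{-}$. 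A small amount of care is needed where $f$ may vanish, but since $f$ is increasing with $f(\infty)=1$ it is positive for all large $r$, which suffices to close the argument.
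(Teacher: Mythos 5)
Your proposal is correct and follows essentially the same route as the paper: a direct case analysis on the shooting trajectory $\tilde a(B,\cdot)$, splitting according to whether $\tilde a$ ever reaches $1$ and whether $\tilde a'$ ever becomes negative. You are in fact somewhat more careful than the paper at the two delicate points — the disjointness of $\mathcal{B}^{-}$ and $\mathcal{B}^{+}$, which the paper dismisses as obvious, and the borderline critical-point case, which the paper excludes via uniqueness for the ODE (\ref{2.1}) against the constant solution $\tilde a\equiv 1$ while you use the sign of $\tilde a''$ at an interior critical point (noting, correctly, that the scenario where $\tilde a'$ merely touches zero would in any event land $B$ harmlessly in $\mathcal{B}^{0}$ or $\mathcal{B}^{+}$).
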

\begin{proof}
Obviously, $\mathcal{B}^{-}$, $\mathcal{B}^{0}$ and $\mathcal{B}^{+}$ are nonoverlapping. If there are some value of $B>0$ and $B\notin\mathcal{B}^{-}$, then only three cases may happen. Case(i), $\tilde{a}'(B,r)\geq0$ and $0<\tilde{a}(B,r)<1$ for all $r>0$, namely, $B\in\mathcal{B}^{0}$. Case(ii), $\tilde{a}'(B,r)\geq0$ for all $r>0$ and $\tilde{a}(B,r_{0})\geq1$ for some $r_0$. We claim that $\tilde{a}'(B,r_{0})=0$ and $\tilde{a}(B,r_{0})=1$ cannot happen simultaneously. Otherwise, applying the uniqueness theorem for the initial value problem of ordinary differential equations, we have $\tilde{a}(B,r)\equiv1$ on $(0,\infty)$, which contradicts the fact $\tilde{a}(B,0)=0$. Thus $a\in\mathcal{B}^{+}$.  Case(iii), there exists $r_1$, first zero of $\tilde{a}'(B,r)$, such that $\tilde{a}(B,r_{1})>1$. Then $a\in\mathcal{B}^{+}$. Therefore $\mathcal{B}^{-}\cup\mathcal{B}^{0}\cup\mathcal{B}^{+}=(0,\infty)$ and the lemma follows.
\end{proof}

\begin{lemma}\label{l3.5}
Let $f(r)$ be given in Proposition \ref{l1.1}. If $B_{0}\in\mathcal{B}^{0}$, then $\tilde{a}(B_{0},r)$ is a solution to the differential equation (\ref{2.1}) satisfying boundary condition $\tilde{a}(B_{0},0)=0$, $\tilde{a}(B_{0},\infty)=1$.

\end{lemma}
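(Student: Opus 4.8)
The plan is to show that membership in $\mathcal{B}^{0}$ forces the right asymptotic behavior at infinity, so that the solution $\tilde a(B_0,r)$ automatically satisfies the boundary condition $\tilde a(B_0,\infty)=1$. First I would observe that since $B_0\in\mathcal{B}^{0}$, the function $\tilde a(B_0,\cdot)$ is monotone nondecreasing and bounded between $0$ and $1$ on all of $(0,\infty)$; in particular $R_{B_0}=\infty$, so the solution is global. Because $\tilde a(B_0,r)$ is bounded and increasing, the limit $L:=\lim_{r\to\infty}\tilde a(B_0,r)$ exists and lies in $(0,1]$. It remains only to prove $L=1$.

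The key step is to rule out $L<1$. Suppose $L<1$. Rewrite equation (\ref{2.1}) as $\tilde a''-\tfrac1r\tilde a'=-\beta^2 f^2(1-\tilde a)$, i.e. $\bigl(r^{-1}\tilde a'\bigr)'=\tfrac1r\bigl(\tilde a''-\tfrac1r\tilde a'\bigr)=-\tfrac{\beta^2}{r}f^2(1-\tilde a)$, but it is cleaner to multiply by $r^{-1}$ directly: from (\ref{2.1}) one has $\frac{d}{dr}\bigl(\frac{\tilde a'}{r}\bigr)=-\frac{\beta^2 f^2(1-\tilde a)}{r}$. Since $f(r)\to 1$ and $1-\tilde a(B_0,r)\to 1-L>0$, the right-hand side is $\le -\frac{c}{r}$ for $r$ large, where $c>0$ is a fixed constant; integrating from some large $r_1$ to $r$ gives $\frac{\tilde a'(r)}{r}\le \frac{\tilde a'(r_1)}{r_1}-c\log(r/r_1)\to-\infty$, so $\tilde a'(r)<0$ eventually, contradicting $\tilde a'(B_0,r)\ge 0$ for all $r>0$. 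Hence $L=1$. Combined with Lemma \ref{l1} giving $\tilde a(B_0,0)=0$, this establishes that $\tilde a(B_0,r)$ solves the boundary-value problem.

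The main obstacle is making the ``eventually'' argument quantitative without circularity: one must secure a strictly positive lower bound on $f^2(1-\tilde a)$ for all large $r$, which uses $f(\infty)=1$ (from the hypotheses on $f$ in Proposition \ref{l1.1}) together with the standing bound $0<\tilde a<1$ and the assumed limit $L<1$. I would also remark that once $L=1$ is known, the bound $|a-1|=\tilde a\le M_2 r^2$ for $r\le 1$ promised in Proposition \ref{l1.1} follows from (\ref{2.3}) of Lemma \ref{l1} (with $M_2$ depending on $B_0$ and the local $O(r^4)$ term), although strictly speaking that part belongs to the proof of the Proposition rather than of this lemma. No delicate uniqueness or regularity issues arise here beyond what Lemma \ref{l1} already provides, so this lemma is essentially a monotonicity-plus-integration argument.
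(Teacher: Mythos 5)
Your proposal is correct, and it closes the argument a little more directly than the paper does. Both proofs begin the same way: the limit $L=\lim_{r\to\infty}\tilde a(B_0,r)$ exists by monotonicity and boundedness, and both exploit the identity $\bigl(\tilde a'/r\bigr)'=\beta^{2}f^{2}(\tilde a-1)/r$ obtained by rewriting (\ref{2.1}). The difference is in how the case $L<1$ is excluded. The paper proceeds in two steps: it first shows $\tilde a'/r$ is decreasing and that its limit must be $0$ (otherwise $\tilde a$ would grow quadratically), and then, using $\tilde a'/r\to0$ together with $f\to1$, deduces from the original equation that $\tilde a''\geq c_1>0$ for large $r$, forcing $\tilde a'\to\infty$ and contradicting boundedness. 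You instead integrate the same identity once: since $f^{2}(1-\tilde a)\geq \tfrac12(1-L)>0$ for large $r$, you get $\tilde a'(r)/r\leq \tilde a'(r_1)/r_1-c\log(r/r_1)\to-\infty$, which contradicts $\tilde a'(B_0,r)\geq0$ directly. This single integration replaces the paper's intermediate claim (\ref{2.15}) entirely and yields a contradiction with the \emph{sign} of $\tilde a'$ rather than with its boundedness; the logarithmic divergence of $\int dr/r$ does the work. Your quantitative lower bound on $f^{2}(1-\tilde a)$ is justified exactly as you say: $f(\infty)=1$ gives $f^{2}\geq\tfrac12$ eventually, and $1-\tilde a\geq1-L$ for all $r$ because $\tilde a$ increases to $L$. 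No gap.
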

\begin{proof}
We need only to show that $\tilde{a}(B_{0},r)\rightarrow1$ as $r\rightarrow\infty$. Since $\tilde{a}(B_{0},r)$ is bounded and strictly increasing, the limit $\lim\limits_{r\rightarrow\infty}\tilde{a}(B_{0},r)$ must exist.

Indeed, the equation (\ref{2.1}) can be rewrited as
\ber\label{2.16}
\left(\frac{\tilde{a}'}{r}\right)'=\beta^{2}f^{2}\frac{(\tilde{a}-1)}{r}.
\eer
Since $f>0$, $0<\tilde{a}<1$, we see that $\tilde{a}'/r$ is decreasing for $r>0$. In particular $\tilde{a}'/r$ approaches a finite limit as $r\rightarrow\infty$. We claim that
\ber\label{2.15}
\lim\limits_{r\rightarrow\infty}\frac{\tilde{a}'(B_{0},r)}{r}=0.
\eer
Otherwise, there is an $R_{0}>0$ such that
\ber
\frac{\tilde{a}'(B_{0},r)}{r}\geq c_{0},~~~R_{0}<r<\infty,
\eer
for some constant $c_{0}>0$, which leads to
\ber
\tilde{a}(B_{0},r_{2})-\tilde{a}(B_{0},r_{1})=\int_{r_{1}}^{r_{2}}\tilde{a}'(B_{0},r)\mathrm{d}r\geq\frac{c_{0}}{2}(r_{2}^{2}-r_{1}^{2}),\notag
\eer
for any $r_{2}>r_{1}>R_{0}$. This result indicates that the limit of $\tilde{a}(B_{0},r)$ as $r\rightarrow\infty$ cannot exist, a contradiction. So (\ref{2.15}) is valid.

We now prove that $\tilde{a}(B_0,r)\geq1-\varepsilon$ for any $\varepsilon>0$ as $r\rightarrow\infty$. Otherwise, using (\ref{2.15}) and $f(\infty)=1$, there exists a positive constant $c_1$ such that
\ber
\tilde{a}''(b_0,r)=\frac{\tilde{a}'(b_0,r)}{r}+\beta^{2}f^{2}(1-\tilde{a}(b_0,r))\geq c_1,~r\rightarrow\infty,
\eer
which leads to $a'(B_0,r_{3})\rightarrow\infty$ at infinity. This contradicts the fact $B_{0}\in\mathcal{B}^{0}$. Thus, we have $1-\varepsilon\leq\tilde{a}(B_0,r)\leq1$ for any $\varepsilon>0$ as $r\rightarrow\infty$, namely, $\lim\limits_{r\rightarrow\infty}\tilde{a}(B_0,r)=1$.
\end{proof}

\begin{lemma}\label{2.0}
Let $f(r)$ be given in Proposition \ref{l1.1}. The sets $\mathcal{B}^{-}$ and $\mathcal{B}^{+}$ are both open and nonempty.
\end{lemma}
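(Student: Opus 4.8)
The plan is to first replace the defining conditions for $\mathcal{B}^{-}$ and $\mathcal{B}^{+}$ by equivalent, manifestly open conditions, and only then invoke continuous dependence on the shooting parameter. The key structural fact is the monotonicity encoded in (\ref{2.16}): writing (\ref{2.1}) as $(\tilde a'/r)'=\beta^{2}f^{2}(\tilde a-1)/r$ and recalling from (\ref{2.3}) that $\tilde a'(B,r)/r\to 2B>0$ as $r\to 0^{+}$, we see that on any interval on which $\tilde a(B,\cdot)<1$ the ratio $\tilde a'(B,r)/r$ is strictly decreasing. Hence, once $\tilde a'(B,\cdot)/r$ has become negative while $\tilde a<1$ it stays negative and $\tilde a$ strictly decreases for as long as $\tilde a<1$; and on $(0,r_{1})$, where $r_{1}$ is the first zero of $\tilde a'(B,\cdot)$, the function $\tilde a$ strictly increases from $\tilde a(B,0)=0$, so it is positive there. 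From these observations I would deduce
\[
\mathcal{B}^{+}=\{\,B>0:\ \tilde a(B,r)>1\ \text{for some}\ r\in(0,R_{B})\,\},
\]
\[
\mathcal{B}^{-}=\{\,B>0:\ \tilde a'(B,r)<0\ \text{and}\ \tilde a(B,r)<1\ \text{for some}\ r\in(0,R_{B})\,\}.
\]
Indeed, if $\tilde a(B,\cdot)$ exceeds $1$ somewhere, then at the first point $r^{*}$ where it equals $1$ it has been monotone increasing (it could never return to the level $1$ after $\tilde a'/r$ turns negative), so $B\in\mathcal{B}^{+}$; conversely, if $B\in\mathcal{B}^{+}$ then $\tilde a'(B,r^{*})>0$, since $\tilde a'(B,r^{*})=0$ together with $\tilde a(B,r^{*})=1$ would force $\tilde a\equiv1$ by ODE uniqueness, contradicting $\tilde a(B,0)=0$, and therefore $\tilde a$ strictly exceeds $1$ just beyond $r^{*}$. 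For the second identity, the right-hand condition plainly holds on $\mathcal{B}^{-}$; conversely, if $\tilde a'(B,\bar r)<0$ and $\tilde a(B,\bar r)<1$, then $\tilde a<1$ on all of $(0,\bar r]$ (a prior crossing of the level $1$ would, by the same argument, keep $\tilde a>1$ afterwards), so $\tilde a'(B,\cdot)/r$ has a unique zero $r_{1}\in(0,\bar r)$, $\tilde a>0$ on $(0,r_{1}]$, and choosing $r_{*}$ slightly past $r_{1}$ yields $\tilde a'(B,r_{*})<0$ with $0<\tilde a(B,\cdot)<1$ on $(0,r_{*}]$, i.e.\ $B\in\mathcal{B}^{-}$.

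With these reformulations, openness is immediate. The solution $\tilde a(B,r)$ of (\ref{2.1}) with $\tilde a(B,0)=0$ and $\tilde a'(B,r)/r\to 2B$, together with its $r$-derivative, depends continuously on $(B,r)$, and the maximal existence radius $R_{B}$ is lower semicontinuous in $B$: away from $r=0$ this is the standard theory, and near the singular endpoint $r=0$ it follows from the Picard scheme in the proof of Lemma \ref{l1}, whose iterates converge uniformly for $B$ in compact sets and are continuous in $B$. Since each of the two reformulated conditions is an open condition on $B$, the sets $\mathcal{B}^{+}$ and $\mathcal{B}^{-}$ are open in $(0,\infty)$.

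To see $\mathcal{B}^{+}\neq\emptyset$ I would push a large-$B$ estimate. The substitution $t=r^{2}$ turns (\ref{2.1}) into $\tilde a_{tt}=g(t)(\tilde a-1)$ with $g(t)=\beta^{2}f(\sqrt t)^{2}/(4t)$, $\tilde a(0)=0$, $\tilde a_{t}(0)=B$; since $f(s)\le M_{1}s$ for $s\le1$, one has $0\le g(t)\le C:=\beta^{2}M_{1}^{2}/4$ on $t\in(0,1]$. As long as $0\le\tilde a\le1$ this gives $|\tilde a_{tt}|\le C$, hence $\tilde a_{t}(t)\ge B-Ct$ and $\tilde a(t)\ge Bt-\tfrac{C}{2}t^{2}$; evaluating at $t=2/B$ gives $\tilde a(2/B)\ge 2-2C/B^{2}>1$ whenever $B>\max\{2,\sqrt{2C}\}$, which is incompatible with $\tilde a\le1$ on $(0,2/B]$. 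Thus every sufficiently large $B$ lies in $\mathcal{B}^{+}$. For $\mathcal{B}^{-}\neq\emptyset$ I would use small $B$: by the bound $|\tilde a(B,r)|\le M_{*}r^{2}$ with $M_{*}=B+\tfrac{3\beta^{2}}{8}M_{1}^{2}$ established in the proof of Lemma \ref{l1}, there is a fixed $r_{0}\in(0,1)$ with $\tilde a(B,r)\le\tfrac12$ on $(0,r_{0}]$ for all $B\le1$; then $\tilde a-1\le-\tfrac12$ there, so
\[
\frac{\tilde a'(B,r_{0})}{r_{0}}=2B+\int_{0}^{r_{0}}\beta^{2}f^{2}(s)\,\frac{\tilde a(B,s)-1}{s}\,\mathrm{d}s\le 2B-\frac{\beta^{2}}{2}\int_{0}^{r_{0}}\frac{f^{2}(s)}{s}\,\mathrm{d}s<0
\]
once $B$ is small enough, the last integral being a fixed positive number because $f>0$ on $(0,\infty)$. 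By the reformulation, every such $B$ lies in $\mathcal{B}^{-}$.

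The step I expect to be the main obstacle is not a single estimate but the bookkeeping required for the two reformulations: one must track the sign of $\tilde a'(B,\cdot)/r$ through (\ref{2.16}) carefully enough to show that the clauses ``$0<\tilde a<1$ up to $r_{*}$'' in the definition of $\mathcal{B}^{-}$ and ``$\tilde a'\ge0$ up to $r^{*}$'' in the definition of $\mathcal{B}^{+}$ are satisfied automatically. A secondary technical point is justifying continuous dependence uniformly up to the singular endpoint $r=0$, which is why I would rely on the explicit uniform Picard estimates of Lemma \ref{l1} rather than on a black-box continuous-dependence theorem.
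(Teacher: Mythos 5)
Your proof is correct and reaches the same conclusion by a genuinely different, and in places more careful, route. The paper establishes $\mathcal{B}^{-}\neq\emptyset$ by examining the degenerate shot $B=0$ (for which $\tilde a\le 0$ and $\tilde a'\le 0$) and perturbing via continuous dependence, then ruling out the alternatives at the resulting interior maximum; you instead integrate $(\tilde a'/r)'=\beta^{2}f^{2}(\tilde a-1)/r$ from $0$ to a fixed $r_{0}$ and read off $\tilde a'(B,r_{0})<0$ directly for small $B$ from the uniform Picard bound $|\tilde a|\le M_{*}r^{2}$ --- a more quantitative version of the same idea. For $\mathcal{B}^{+}$ the paper rescales $r=B^{-1/2}t$ and passes formally to the limit $B\to\infty$, where the limiting problem $\hat a''-\hat a'/t=0$, $\hat a\sim t^{2}$ gives $\hat a=t^{2}$ crossing $1$; your substitution $t=r^{2}$ together with the explicit two-sided bound $\tilde a\ge Bt-\tfrac{C}{2}t^{2}$ makes that limit argument rigorous without any hand-waving about ``small $\beta^{2}/B$''. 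The most substantive addition in your write-up is the reformulation of $\mathcal{B}^{\pm}$ as single-point conditions (``$\tilde a>1$ somewhere'' and ``$\tilde a'<0$ with $\tilde a<1$ somewhere''), justified by the strict monotonicity of $\tilde a'/r$ on intervals where $\tilde a<1$; this makes openness genuinely immediate, whereas the paper's definitions involve clauses over entire intervals and its one-line appeal to continuous dependence quietly skips exactly this bookkeeping. One small caveat: your final inequality for $\mathcal{B}^{-}$ requires $\int_{0}^{r_{0}}f^{2}(s)/s\,\mathrm{d}s>0$, i.e.\ $f\not\equiv 0$ on $(0,r_{0}]$ for the \emph{specific} small $r_{0}$ where the quadratic bound holds; this is automatic if ``increasing'' in Proposition \ref{l1.1} is read as strictly increasing (as it is for the actual profile functions), but since elements of the fixed-point set $\mathcal{S}$ are only required to be nondecreasing you should either impose strict positivity of $f$ on $(0,\infty)$ or note that the paper's own $B=0$ argument needs the same hypothesis, so nothing is lost relative to the original.
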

\begin{proof}
If $B=0$, then from (\ref{2.2})
\begin{align}
\tilde{a}(0,r)&=\frac{\beta}{2}\int_{0}^{r}s\left(\frac{r^{2}}{s^{2}}-1\right)f^{2}(s)(\tilde{a}(s)-1)\mathrm{d}s,\\
\tilde{a}'(0,r)&=\frac{\beta}{2}\int_{0}^{r}\frac{2r}{s}f^{2}(s)(\tilde{a}(s)-1)\mathrm{d}s.
\end{align}
Obviously, $\tilde{a}(0,r)<0$, $\tilde{a}'(0,r)<0$ for all $r>0$ by iteration. Since $\tilde{a}$ and $\tilde{a}'$ continuously depend on the parameter $B$, so when $B_{1}$ is close to 0, we also have
\ber
\tilde{a}(B_{1},r_{1})<0, \tilde{a}'(B_{1},r_{1})<0,
\eer
for some $r_{1}>0$. However, from (\ref{2.3}), we obtain $\tilde{a}(B,r)>0$, $\tilde{a}'(B,r)>0$ initially for any $B>0$. Then there is a point $r_{0}\in(0,r_{1})$ such that $r_{0}$ is a local maximum of $\tilde{a}(B_{1},r)$ and $\tilde{a}'(B_{1},r_{0})=0$. If $\tilde{a}(B_{1},r_{0})=1$, applying the uniqueness theorem for the initial value problem of an ordinary differential equation we have $\tilde{a}(B_{1},r)=1$ for all $r>0$, which is false. If $\tilde{a}(B_{1},r_{0})>1$, such a situation violates the equation (\ref{2.1}).
Thus we are left with the only possible situation, $\tilde{a}(B_{1},r_{0})<1$. Of course, we can find $r_{*}\in(r_{0},r_{1})$ such that $\tilde{a}'(B_{1},r_{*})<0$ and $0<\tilde{a}(B_{1},r)<1$ for any $r\in(0,r_{*}]$, which implies $B_{1}\in\mathcal{B}^{-}$.

It remains to show $\mathcal{B}^{+}\neq\emptyset$. Making the change of scale $r=B^{-\frac{1}{2}}t$ in integral equation (\ref{2.1}) and writing $\hat{a}(t)=\tilde{a}(B^{-\frac{1}{2}}t)$, $\hat{f}(t)=f(B^{-\frac{1}{2}}t)$, we have
\ber\label{2.10}
\hat{a}''(t)-\frac{1}{t}\hat{a}'(t)=\frac{\beta^{2}}{B}\hat{f}^{2}(t)(\hat{a}(t)-1), t\in[0,R_B B^{\frac{1}{2}}).
\eer
Under the assumption of $f(r)$, we obtain
\begin{equation}
\left|\frac{\beta^{2}}{B}\hat{f}^{2}(t)(\hat{a}(t)-1)\right|\leq\frac{\beta^{2}}{B}|\hat{a}(t)-1|,~~~t>0.\notag
\end{equation}
In view of (\ref{00}), there holds $\delta B^{\frac{1}{2}}\rightarrow1$ as $B\rightarrow\infty$. Takeing $B\rightarrow\infty$, we see that
\ber\label{2.11}
\hat{a}''(t)-\frac{1}{t}\hat{a}'(t)=0, ~t\in[0,1],\label{2.11}\\
\hat{a}(t)\sim t^{2}, ~t\rightarrow0.\label{2.12}
\eer
Since the solution of (\ref{2.11})-(\ref{2.12}) is $\hat{a}(t)=t^{2}$, it follows that for small value of $\frac{\beta^{2}}{B}$, $\hat{a}(B,t)=1$ at $t_{0}=1$ and $\hat{a}'(B,t_{0})>0$. So $\mathcal{B}^{+}$ is non-empty. Using the continuous dependence of $\tilde{a}$ and $\tilde{a}'$ on the parameter $B$, $\mathcal{B}^{-}$ and $\mathcal{B}^{+}$ are open set.
\end{proof}

Since the connected set $B>0$ cannot consist of two open disjoint non-empty sets, there must be some value of $B$ in $\mathcal{B}^{0}$ such that $0<\tilde{a}(B,r)<1$, $\tilde{a}'(B,r)\geq0$ for all $r>0$. If $\tilde{a}(B,r)$ is not strictly increasing, it must be a  constant in an interval. So we arrive at a contradiction using (\ref{2.1}) because we would have $\tilde{a}(B,r)=1$ for all $r>0$ which is impossible. Thus, for any $B\in\mathcal{B}^{0}$,
\ber
0<\tilde{a}(B,r)<1, \tilde{a}'(B,r)>0,~r>0.
\eer

\begin{lemma}\label{ll1}
Let $f(r)$ be given in Proposition \ref{l1.1}. If $B_{0}\in\mathcal{B}^{0}$, then for any $0\leq r\leq1$
\ber
\tilde{a}(B_{0},r)\leq M_{2}r^{2},
\eer
where $M_{2}>0$ is a constant.
\end{lemma}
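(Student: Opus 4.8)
\textbf{Proof proposal for Lemma \ref{ll1}.}

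The goal is to establish the quadratic bound $\tilde a(B_0,r)\le M_2 r^2$ near the origin uniformly for a $\mathcal B^0$-shooting parameter. The natural approach is to re-run the Picard iteration estimates from Lemma \ref{l1} but keep track of the fact that here we do not have the freedom of choosing $B$ small; instead $B_0$ is fixed (though \emph{a priori} possibly large). First I would recall from the proof of Lemma \ref{l1} that on $[0,\delta]$, with $\delta=\min\{1,B_0^{-1/2},M_*^{-1/2}\}$ and $M_*=B_0+\tfrac{3\beta^2}{8}M_1^2$, the limit function satisfies $\tilde a(B_0,r)\le M_* r^2$. Thus on the possibly-short interval $[0,\delta]$ the claimed estimate already holds with constant $M_*$. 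The remaining work is to push the quadratic bound out to all of $[0,1]$, i.e. across the gap $[\delta,1]$ (which is non-empty only when $B_0>1$ or $M_*>1$).

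For the interval $[\delta,1]$ I would argue directly from the integral equation \eqref{2.2}. Since $B_0\in\mathcal B^0$ we know $0<\tilde a(B_0,s)<1$ for all $s>0$, so $|\tilde a(s)-1|\le 1$, and using $f(s)\le M_1 s$ for $s\le 1$ together with $\tilde a_s\ge 0$ gives, for $r\le 1$,
\be
\tilde a(B_0,r)=B_0 r^2+\frac{\beta^2}{2}\int_0^r s\Big(\frac{r^2}{s^2}-1\Big)f^2(s)\big(\tilde a(s)-1\big)\,\mathrm ds\le B_0 r^2+\frac{\beta^2}{2}\int_0^r \frac{r^2}{s}M_1^2 s^2\,\mathrm ds\le B_0 r^2+\frac{\beta^2}{4}M_1^2 r^4.
\ee
Since $r\le 1$ we may bound $r^4\le r^2$, so $\tilde a(B_0,r)\le\big(B_0+\tfrac{\beta^2}{4}M_1^2\big)r^2$ for all $r\in[0,1]$, and one takes $M_2=B_0+\tfrac{\beta^2}{4}M_1^2$. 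In fact this single estimate covers the whole interval $[0,1]$ at once and makes the splitting into $[0,\delta]$ and $[\delta,1]$ unnecessary; the key point that makes it work is simply that the integrand in \eqref{2.2} is controlled by $M_1^2 r^2/s$ once we use the \emph{sign} information $\tilde a-1<0$ coming from $B_0\in\mathcal B^0$, so no Gr\"onwall-type feedback is needed.

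The only mild obstacle is making sure the estimate is honestly uniform: $M_2$ depends on $B_0$, which is acceptable since in Proposition \ref{l1.1} the constant $M_2$ is allowed to depend on the data (equivalently, once $B_0\in\mathcal B^0$ is selected it is a fixed number determined by $f$, $\beta$). If one wanted $M_2$ independent of the particular $B_0\in\mathcal B^0$ one would need an a priori upper bound on $\mathcal B^0$; I would note that such a bound follows from Lemma \ref{2.0}, since $\mathcal B^+$ is shown to contain all sufficiently large $B$, forcing $\sup\mathcal B^0<\infty$. With that remark in place, $M_2$ can be taken to depend only on $\beta$ and $M_1$, completing the proof.
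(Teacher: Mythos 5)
Your proof is correct and follows essentially the same route as the paper: a direct estimate of the integral equation (\ref{2.2}) using $f(s)\le M_1 s$ and $0<\tilde a(B_0,s)<1$, combined with the a priori bound $B_0\le\beta^2 N$ extracted from the proof of Lemma \ref{2.0}. The only differences are presentational: the paper places the bound on $B_0$ first (it is essential rather than optional, since $M_2$ must be uniform in $f$ for the later fixed-point argument), whereas you defer it to a closing remark, and your initial detour through the Picard iteration on $[0,\delta]$ is, as you yourself note, unnecessary.
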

\begin{proof}
From the argument of Lemma \ref{2.0}, there exists $N>0$ large enough such that $\tilde{a}(B,r)$ crosses $1$ when
\ber
\frac{\beta^{2}}{B}<\frac{1}{N}.\nn
\eer
It follows that if $B>\beta^{2}N$, then $B\in\mathcal{B}^{+}$. Since $B_{0}\notin\mathcal{B}^{+}$, it must satisfy $B_{0}\leq\beta^{2}N$.

Since $f(r)\leq M_{1}r$ for $0\leq r\leq1$, there holds
\begin{align}
\left|\tilde{a}(B_{0},r)\right|&\leq B_{0}r^{2}+\frac{\beta^{2}}{2}\int_{0}^{r} \nn s\left(\frac{r^{2}}{s^{2}}-1\right)f^{2}(s)(\tilde{a}(s)+1)\mathrm{d}s\\\nn
&\leq B_{0}r^{2}+\beta^{2}\int_{0}^{r}M_{1}^{2}s^{3}\mathrm{d}s\\\nn
&\leq B_{0}r^{2}+\frac{\beta^{2}M_{1}^{2}}{4}r^{4}\\\nn
&\leq M_{2}r^{2},~~r\leq1,
\end{align}
where we denote $M_{2}=\beta^{2}N+\frac{\beta^{2}M_{1}^{2}}{4}$. The proof of lemma is complete.
\end{proof}

\begin{lemma}
Let $f(r)$ be given in Proposition \ref{l1.1}. The equation (\ref{1.1}) admits a unique solution satisfying $a(0)=0$ and $a(\infty)=1$.
\end{lemma}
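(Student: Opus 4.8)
The plan is to separate the two assertions — existence and uniqueness — and to observe that the function carrying the boundary data "$0$ at the origin, $1$ at infinity" is exactly the transformed profile $\tilde{a}$ studied throughout this section, which solves the linear equation (\ref{2.1}). For existence I would simply harvest the lemmas already proved; the genuinely new content, and the step I expect to be the main obstacle, is uniqueness, which I would extract from the \emph{linearity} of (\ref{2.1}) together with a maximum-principle argument.

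For existence I would argue by connectedness. By Lemma~\ref{l.0} the parameter ray $(0,\infty)$ is the disjoint union $\mathcal{B}^{-}\cup\mathcal{B}^{0}\cup\mathcal{B}^{+}$, and by Lemma~\ref{2.0} the sets $\mathcal{B}^{-}$ and $\mathcal{B}^{+}$ are open and nonempty. Since $(0,\infty)$ is connected it cannot be written as a union of two disjoint nonempty open sets, so $\mathcal{B}^{0}\neq\emptyset$; fix $B_{0}\in\mathcal{B}^{0}$. By Lemma~\ref{l3.5} the orbit $\tilde{a}(B_{0},\cdot)$ is then a global solution of (\ref{2.1}) with $\tilde{a}(B_{0},0)=0$ and $\tilde{a}(B_{0},\infty)=1$; it is strictly increasing with $0<\tilde{a}(B_{0},r)<1$ for $r>0$ by the remark following Lemma~\ref{2.0}, and by Lemma~\ref{ll1} it satisfies $\tilde{a}(B_{0},r)\leq M_{2}r^{2}$ on $[0,1]$. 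This furnishes a solution taking value $0$ at the origin and $1$ at infinity with all the quantitative properties required.

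It remains to prove uniqueness, which is precisely where the construction above gives no information: connectedness only tells us that $\mathcal{B}^{0}$ is nonempty, not that it reduces to a single point, so uniqueness cannot be read off from the shooting parameter. Instead I would exploit that (\ref{2.1}) is an \emph{inhomogeneous linear} equation in $\tilde{a}$. Hence if $\tilde{a}_{1},\tilde{a}_{2}$ are two solutions sharing the same boundary data, their difference $w=\tilde{a}_{1}-\tilde{a}_{2}$ solves the associated \emph{homogeneous} linear problem
\be
w''-\frac{1}{r}w'-\beta^{2}f^{2}\,w=0,\qquad w(0)=0,\quad w(\infty)=0,
\ee
which is exactly equation (\ref{1.1}). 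Rewriting it in the self-adjoint form $(w'/r)'=\beta^{2}f^{2}\,(w/r)$ and invoking the maximum principle finishes the argument: if $w\not\equiv0$, then either $\sup w>0$ or $\inf w<0$. In the first case, since $w\to0$ at both ends, the positive supremum is attained at some interior $r_{0}\in(0,\infty)$, where $w'(r_{0})=0$ and $w''(r_{0})\leq0$; but the equation forces $w''(r_{0})=\beta^{2}f^{2}(r_{0})\,w(r_{0})>0$, a contradiction. Replacing $w$ by $-w$ handles the second case identically. Therefore $w\equiv0$, giving uniqueness.

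The main obstacle is thus uniqueness, and the only further care it requires is at the two endpoints. Near $r=0$ the indicial analysis used in Lemma~\ref{l1} shows every admissible solution behaves like $cr^{2}$, so $w$ and $w'$ extend continuously with $w(0)=w'(0)=0$ and the singular weight $1/r$ causes no difficulty for the interior second-derivative test; at $r=\infty$ the imposed limit $w\to0$ guarantees that a positive supremum of $w$, if present, is genuinely attained at a finite interior point rather than escaping to infinity. Once these two endpoint facts are verified, the maximum-principle contradiction applies on all of $(0,\infty)$, and existence and uniqueness of the solution with data $0$ at the origin and $1$ at infinity both follow.
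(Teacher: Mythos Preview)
Your proposal is correct and follows essentially the same route as the paper. The paper's proof treats existence as already established by the connectedness discussion immediately after Lemma~\ref{2.0} and therefore focuses solely on uniqueness: it sets $\phi=a_{2}-a_{1}$, notes that $\phi$ satisfies the homogeneous equation (\ref{2.18}) with $\phi(0)=\phi(\infty)=0$, and derives the same interior-maximum contradiction $\phi''(r_{1})=\beta^{2}f^{2}(r_{1})\phi(r_{1})>0$ that you obtain for $w$. Your version is slightly more careful about the endpoint behavior near $r=0$ and $r=\infty$, but the argument is otherwise identical.
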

\begin{proof}
Suppose that there are two solutions $a_{1}(r)$ and $a_{2}(r)$. Set $\phi(r)=a_{2}(r)-a_{1}(r)$. Obviously, $\phi(0)=0$, $\phi(\infty)=0$ and $\phi(r)$ satisfies the equation
\ber\label{2.18}
\phi''-\frac{1}{r}\phi'-\beta^{2}f^{2}\phi=0.
\eer
We claim that $\phi(r)\equiv0$. If there is a point $r_{0}$ such that $\phi(r_{0})\neq0$, without loss of generality we may assume $\phi(r_{0})>0$. It follows that there exists some point $r_{1}$ such that $\phi(r_{1})>0$, $\phi'(r_{1})=0$ and $\phi''(r_{1})\leq0$. From (\ref{2.18}), we have
\ber
 \phi''(r_{1})=\frac{1}{r_{1}}\phi'(r_{1})+\beta^{2}f^{2}(r_{1})\phi(r_{1})>0,\notag
\eer
which reaches a contradiction. Therefore, we obtain $a_{1}(r)=a_{2}(r)$ for all $r>0$.
\end{proof}

\begin{lemma}\label{l6.2}
Let $f(r)$ be given in Proposition \ref{l1.1}. There holds the asymptotic decay estimates
\begin{equation}\label{2.19}
a(r)=O(r^{\frac{1}{2}}\exp{(-\beta(1-\varepsilon)r)}),~~r\rightarrow\infty.
\end{equation}
\end{lemma}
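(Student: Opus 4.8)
The plan is to establish the exponential decay of $a(r) = 1 - \tilde a(B_0, r)$ by comparing the linearized equation at infinity with an explicit supersolution built from modified Bessel functions. First I would recall that from Lemma~\ref{l3.5} we already know $\tilde a(B_0, r) \to 1$, hence $a(r) \to 0$ as $r \to \infty$, and that $0 < a(r) < 1$ with $a$ strictly decreasing. Since $f(\infty) = 1$ and $f$ is increasing with $0 < f < 1$, for any $\varepsilon \in (0,1)$ there is $R_\varepsilon > 0$ such that $f(r) \ge 1 - \varepsilon$ for $r \ge R_\varepsilon$, so that on $[R_\varepsilon, \infty)$ the equation \eqref{1.1}, namely $a'' - \frac1r a' - \beta^2 f^2 a = 0$, gives
\begin{equation}
a'' - \frac{1}{r} a' \ge \beta^2 (1-\varepsilon)^2 a, \qquad r \ge R_\varepsilon,
\end{equation}
since $a > 0$.

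Next I would introduce the standard comparison function. The operator $\frac{d^2}{dr^2} - \frac1r \frac{d}{dr} - k^2$ (with $k = \beta(1-\varepsilon)$) is, after the substitution $a = r \, w$, conjugate to a Bessel-type operator: one checks that $a(r) = r K_1(k r)$ solves $a'' - \frac1r a' - k^2 a = 0$ exactly, where $K_1$ is the modified Bessel function of the second kind. Using the classical asymptotics $K_1(\rho) \sim \sqrt{\pi/(2\rho)}\, e^{-\rho}$ as $\rho \to \infty$, the comparison function $\sigma(r) := C r K_1(kr)$ behaves like $C' r^{1/2} e^{-kr}$ at infinity, which is exactly the claimed rate. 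I would then choose the constant $C$ large enough that $\sigma(R_\varepsilon) \ge a(R_\varepsilon)$ (possible since $\sigma(R_\varepsilon) > 0$ and $a(R_\varepsilon)$ is a fixed finite number), and observe $\sigma(r) \to 0 = a(\infty)$.

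The core of the argument is then a maximum-principle comparison on $[R_\varepsilon, \infty)$. Set $\psi = a - \sigma$; then $\psi$ satisfies $\psi'' - \frac1r \psi' - \beta^2 f^2 \psi \le \big(\beta^2 f^2 - k^2\big)\sigma \cdot(\text{sign considerations})$ — more cleanly, on $[R_\varepsilon,\infty)$ we have $L a := a'' - \frac1r a' - k^2 a = (\beta^2 f^2 - k^2) a \ge 0$ while $L\sigma = 0$, so $L\psi \ge 0$; combined with $\psi(R_\varepsilon) \le 0$ and $\limsup_{r\to\infty}\psi(r) \le 0$, the same interior-extremum argument used in the uniqueness lemma above (if $\psi$ had a positive interior maximum at $r_1$ then $\psi'(r_1)=0$, $\psi''(r_1)\le 0$, yet $L\psi(r_1)\ge 0$ forces $\psi''(r_1) = \frac1{r_1}\psi'(r_1) + k^2\psi(r_1) > 0$, a contradiction) yields $\psi \le 0$, i.e. $a(r) \le \sigma(r) = C r K_1(kr)$ on $[R_\varepsilon,\infty)$. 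Feeding in the Bessel asymptotics gives $a(r) = O(r^{1/2} \exp(-\beta(1-\varepsilon) r))$, and since $\varepsilon$ was arbitrary this is \eqref{2.19}.

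The main obstacle I anticipate is making the comparison rigorous on the unbounded interval: one must rule out the possibility that $\psi$ escapes to a positive value at infinity, which is handled by the known limits $a(\infty) = 0$ and $\sigma(\infty) = 0$, but care is needed because the naive maximum principle requires either a bounded domain or a sign condition at infinity. An alternative that sidesteps Bessel functions entirely — and which I would present if cleaner — is to use the elementary supersolution $\sigma(r) = C r^{1/2} e^{-kr}$ directly: a short computation shows $L_0 \sigma := \sigma'' - \frac1r\sigma' - k^2\sigma = \big(-\frac{3}{4r^2} - \frac{k}{r}\big)\sigma \le 0$ for $r$ large, but the sign here goes the wrong way, so one instead slightly enlarges the exponent or works with $\sigma(r) = C r^{1/2} e^{-k r}$ against the strictly smaller coefficient, absorbing the lower-order $r^{-1}$ and $r^{-2}$ terms into the gap between $\beta^2 f^2(r) \to \beta^2$ and $k^2 = \beta^2(1-\varepsilon)^2$ for $r$ beyond a larger threshold $R'_\varepsilon$. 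Either route reduces to the same one-line interior-maximum contradiction; the Bessel version is tighter and matches the $r^{1/2}$ prefactor in the statement exactly, so I would lead with it.
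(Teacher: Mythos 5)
Your argument is correct and follows essentially the same route as the paper: a supersolution comparison closed by the interior-maximum contradiction on $[r_{\varepsilon},\infty)$ using $a(\infty)=0$, the paper substituting $v=r^{-1/2}a$ and comparing against $c\,e^{-\beta(1-\varepsilon)r}$ where you compare $a$ directly against $C\,rK_{1}(\beta(1-\varepsilon)r)$ --- equivalent up to the change of variable. (Incidentally, your worry about the elementary supersolution $\sigma=C r^{1/2}e^{-kr}$ is unfounded: one computes $\sigma''-\frac{1}{r}\sigma'-k^{2}\sigma=-\frac{3}{4r^{2}}\sigma\le 0$, which is precisely the sign a supersolution needs, and is in effect what the paper's substitution achieves.)
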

\begin{proof}
1t is convenient to introduce the shifted field $v(r)$ defined by
\ber
v(r)=r^{-\frac{1}{2}}a(r).
\eer
Clearly, $v(r)>0$ for any $r>0$ and $v(\infty)=0$. Inserting $a(r)=r^{\frac{1}{2}}v(r)$ in (\ref{1.1}), then
\begin{equation}
v''(r)=(\frac{3}{4}\frac{1}{r^{2}}+\beta^{2}f^{2}(r))v(r).\notag
\end{equation}

Now define the comparison function
\begin{equation}\label{2.20}
\eta(r)=c\exp{(-\beta(1-\varepsilon)r)}.
\end{equation}
where $c>0$ is a constant to be chosen later. A simple calculation shows that
\begin{equation}
(v-\eta)''=\beta^{2}(1-\varepsilon)^{2}(v-\eta)
+\left(\frac{3}{4}\frac{1}{r^{2}}+\beta^{2}(f^{2}-(1-\varepsilon)^{2})\right)v.\notag
\end{equation}
Since $f(r)\rightarrow1$ as $r\rightarrow\infty$, there exists a positive constant $r_{\varepsilon}$ sufficiently large such that
\begin{equation}
\frac{3}{4}\frac{1}{r^{2}}+\beta^{2}(f^{2}-(1-\varepsilon)^{2})>0,~~r>r_{\varepsilon}.\notag
\end{equation}
So we arrive at
\begin{equation}\label{2.21}
(v-\eta)''\geq\beta^{2}(1-\varepsilon)^{2}(v-\eta), r>r_{\varepsilon}.
\end{equation}
Choose $c>0$ in the definition of $\eta$ in (\ref{2.20}) to make $(v-\eta)(r_{\varepsilon})\leq0$. Using boundary condition $(v-\eta)(r)\rightarrow0$ as $r\rightarrow\infty$, we obtain by applying the maximum principle in (\ref{2.21}) the result $v(r)<\eta(r)$ for all $r>r_{\varepsilon}$.
\end{proof}

\section{Existence and uniqueness of $g(r)$}
In this section, for the fixed $f(r)$ we will prove the following proposition through several lemmas by shooting the initial value of $g(r)$.

\begin{proposition}\label{l1.2}
Given function $f(r)$ as in Proposition \ref{l1.1}, we can find a unique function $g(r)$ satisfying (\ref{1.2}) and mixed boundary condition $g'(0)=0$, $g(\infty)=0$. In particularly, if $0<\alpha<\sqrt{\frac{1}{2}}$, then $g(r)$ strictly decreasing and  $0<g(r)<\sqrt{1-2\alpha^{2}}$ for all $r>0$. However, if $\alpha\geq\sqrt{\frac{1}{2}}$, then $g(r)\equiv0$.
\end{proposition}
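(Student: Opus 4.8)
The plan is to treat \eqref{1.2} as a single-parameter shooting problem in $\mu:=g(0)\ge 0$, paralleling the structure used for $a(r)$. For $f$ as in Proposition \ref{l1.1} fixed, one first sets up the local existence theory near $r=0$: writing \eqref{1.2} in integrated form
\[
g(r)=\mu-\int_0^r\frac1s\int_0^s t\Bigl(\alpha^2+\tfrac12(f^2(t)+g^2(t)-1)\Bigr)g(t)\,\dd t\,\dd s,
\]
a Picard iteration (just as in Lemma \ref{l1}) gives a unique local solution $g(\mu,r)$ with $g'(\mu,0)=0$ and $g(\mu,r)=\mu+O(r^2)$, which extends to a maximal interval. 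I would record monotone/ sign structure of the nonlinear coefficient $Q(r):=\alpha^2+\tfrac12(f^2+g^2-1)$: since $0<f<1$, for $r$ small $Q(0)=\alpha^2-\tfrac12(1-\mu^2)$, whose sign is governed by whether $\alpha^2\gtrless\tfrac12$ (after using $g(0)=\mu$). This is exactly the source of the $\alpha=\sqrt{1/2}$ threshold.

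Next comes the shooting dichotomy. Define, in analogy with $\mathcal B^\pm,\mathcal B^0$,
\[
\mathcal S^+=\{\mu>0: g(\mu,\cdot)\text{ develops a zero or }g'>0\text{ somewhere, i.e. }g\text{ "overshoots"}\},\quad
\mathcal S^-=\{\mu>0: g(\mu,r)\to+\infty\text{ or stays positive increasing — "undershoots"}\},
\]
and $\mathcal S^0$ the good set where $g$ is positive, strictly decreasing, and bounded. One shows $\mathcal S^\pm$ are open (continuous dependence on $\mu$) and nonempty: for $\mu$ small, linearizing gives $g''+\frac1r g'\approx(\alpha^2-\tfrac12+\tfrac12 f^2)g$; when $\alpha^2<\tfrac12$ this coefficient is negative near $0$ (modified-Bessel/oscillatory-type behavior forcing a zero), putting small $\mu$ in one class, while $\mu$ close to $\sqrt{1-2\alpha^2}$ — or large $\mu$ — forces the opposite sign of $g''$ at $r=0$ and puts it in the other class; connectedness of $(0,\infty)$ then yields a nonempty $\mathcal S^0$. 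For $\mu\in\mathcal S^0$, boundedness plus monotonicity give a limit $L\ge 0$ at infinity; one rules out $L>0$ by the same argument as in Lemma \ref{l3.5} — if $g\to L>0$ then $f^2+g^2-1\to L^2>0$ near infinity would force $g''\ge c>0$, contradicting boundedness — so $g(\infty)=0$, and then $g<\sqrt{1-2\alpha^2}$ follows from a maximum-principle/comparison argument at an interior maximum (if $g(r_0)\ge\sqrt{1-2\alpha^2}$ at a critical point then $g''(r_0)\ge 0$ but the equation forces $g''(r_0)<0$). Uniqueness: if $g_1,g_2$ are two such solutions, consider $w=g_1-g_2$ and argue as in the uniqueness lemma for $a$, using that $\alpha^2+\tfrac12(f^2+g^2-1)$ has a sign that precludes an interior positive maximum of $w$; alternatively a Wronskian/Sturm comparison. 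The case $\alpha\ge\sqrt{1/2}$: here $g\equiv0$ is a solution, and one shows it is the \emph{only} one with $g'(0)=0,g(\infty)=0$ — if $g\not\equiv0$, WLOG $g>0$ somewhere, at an interior maximum $r_0$ one has $0\ge g''(r_0)=\alpha^2 g(r_0)+\tfrac12(f^2+g^2-1)g(r_0)>0$ provided $\alpha^2\ge\tfrac12$ forces the bracket $\ge\alpha^2-\tfrac12+\tfrac12(f^2+g^2)>0$ at that point, a contradiction; so $g\le 0$, and symmetrically $g\ge 0$, whence $g\equiv0$. (One must also handle the sign: the ansatz and local expansion allow $\mu=0$ as the natural forced value when $\alpha\ge\sqrt{1/2}$.)

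I expect the main obstacle to be making the shooting dichotomy genuinely exhaustive and the classes genuinely open when $0<\alpha<\sqrt{1/2}$: unlike the $a$-equation, \eqref{1.2} is nonlinear in $g$ through the cubic term, so a priori the maximal existence interval $R_\mu$ may be finite and the behavior of $g(\mu,\cdot)$ near $R_\mu$ must be controlled uniformly in $\mu$ on compact sets to get openness; also one must show the "good" solution cannot oscillate (staying positive and strictly decreasing, not merely bounded), which requires ruling out that $g$ turns around — done by noting that once $g$ is small, $f^2+g^2-1<0$ so $Q<\alpha^2<\tfrac12$, and combining with $g'(0)=0$ to propagate $g'<0$. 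A secondary subtlety is the asymptotic rate $g(r)=O(\exp(-\alpha(1-\varepsilon)r))$, which I would get by the same shifted-comparison-function/maximum-principle device as in Lemma \ref{l6.2}, comparing $g$ against $c\,e^{-\alpha(1-\varepsilon)r}$ on $[r_\varepsilon,\infty)$ after absorbing the $\tfrac1r g'$ term and using $f^2+g^2-1\to 0^-$.
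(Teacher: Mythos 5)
Your overall strategy --- Picard iteration near $r=0$, a three-way shooting trichotomy in $\mu=g(0)$, connectedness to produce the good shooting value, ruling out a positive limit at infinity, and the threshold $\alpha=\sqrt{1/2}$ coming from the sign of $\alpha^2-\tfrac12(1-\mu^2)$ --- is exactly the paper's (Section 4), including the Bessel/Sturm comparison for small $\mu$ and the continuity argument for $\mu$ near $\sqrt{1-2\alpha^2}$. Two points need repair, one of them essential.

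The essential gap is uniqueness. Your primary suggestion is to set $w=g_1-g_2$ and ``argue as in the uniqueness lemma for $a$,'' i.e.\ exclude an interior positive maximum of $w$. That works for the $a$-equation because its zeroth-order coefficient $\beta^2 f^2$ is nonnegative everywhere. Here $w$ satisfies $w''+\tfrac1r w'=\bigl[\alpha^2+\tfrac12(f^2-1)+\tfrac12(g_1^2+g_1g_2+g_2^2)\bigr]w$, and in the relevant regime $\alpha<\sqrt{1/2}$ this coefficient is \emph{negative} near $r=0$ (it tends to $\alpha^2-\tfrac12+\tfrac32 g(0)^2+o(1)$, which has no fixed sign), so the maximum principle gives nothing; indeed the whole point of this regime is that the linearization at $g=0$ is unstable. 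The paper instead uses the Brezis--Oswald device: writing $(rg_i')'/g_i = r\bigl(\alpha^2+\tfrac12(f^2+g_i^2-1)\bigr)$, subtracting, multiplying by $g_2^2-g_1^2$ and integrating over $(0,\infty)$ yields
\begin{equation*}
\int_0^\infty r\Bigl\{\Bigl(g_1'-\tfrac{g_1g_2'}{g_2}\Bigr)^2+\Bigl(g_2'-\tfrac{g_2g_1'}{g_1}\Bigr)^2\Bigr\}\,\dd r=-\tfrac12\int_0^\infty r\,(g_1^2-g_2^2)^2\,\dd r,
\end{equation*}
forcing $g_1\equiv g_2$. Your fallback ``Wronskian/Sturm comparison'' is too vague to substitute for this; without some such structural identity the uniqueness claim is unproved.

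A secondary, fixable defect: your classes $\mathcal S^\pm$ do not form a usable trichotomy as written. ``Develops a zero'' and ``$g'>0$ somewhere'' are \emph{opposite} behaviors (the first is the undershoot, the second the overshoot where $g$ turns around at a positive local minimum), and they must be placed in two \emph{disjoint} open sets for the connectedness argument to run; likewise ``stays positive increasing'' cannot occur for $\mu<\sqrt{1-2\alpha^2}$ since $(rg')'<0$ initially forces $g'<0$ near $0$. The paper handles this by first discarding $\mu\ge\sqrt{1-2\alpha^2}$ (there $g$ is nondecreasing and bounded below by $\sqrt{1-2\alpha^2}$, so $g(\infty)=0$ fails), restricting to $I=(0,\sqrt{1-2\alpha^2})$, and defining $I_+$ (first critical point with $g>0$), $I_-$ (first zero of $g$ with $g'<0$ up to it), $I_0$ (the complement). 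This restriction also makes the bound $0<g<\sqrt{1-2\alpha^2}$ immediate from monotonicity, with no separate interior-maximum argument needed.
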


Consider the initial value problem
\ber
g''+\frac{1}{r}g'-\alpha^{2}g-\frac{1}{2}(f^{2}+g^{2}-1)g=0,\label{3.1}\\
g(0)=\lambda,g'(0)=0,\label{3.1'}
\eer
where $\lambda>0$ is an arbitrary constant. We may convert (\ref{3.1}) into the integral equation
\begin{equation}\label{3.2}
g(r)=\lambda+\int_{0}^{r}s(\ln{r}-\ln{s})g(s)\left(\alpha^{2}+\frac{1}{2}(f^{2}(s)+g^{2}(s)-1)\right)\mathrm{d}s.
\end{equation}

For each $\lambda>0$ and $f(r)$ given in Proposition \ref{l1.1}, the boundary value problem (\ref{3.1})-(\ref{3.1'}) admits a unique solution near $r=0$ by Picard iteration. The solution $g(r)$ can be extended uniquely to a maximal interval $[0,R_{\lambda})$ such that either $R_{\lambda}=\infty$ or $|g(r)|\rightarrow\infty$ as $r\rightarrow R_{\lambda}$. In addition, the solution is smooth in $r$ definition domain and has the expansion
\ber\label{3.3}
g(r)=\lambda+O(r^{2})~(r\rightarrow0).
\eer

\begin{lemma}
Let $f(r)$ be given in Proposition \ref{l1.1}. If $\alpha\geq\frac{1}{2}$, then $g(r)=0$ for all $r>0$.
\end{lemma}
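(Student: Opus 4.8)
The plan is to argue by contradiction, working from the divergence form of the radial operator rather than a maximum‑principle comparison. Suppose $g$ solves the initial value problem (\ref{3.1})--(\ref{3.1'}) together with $g(\infty)=0$ but $g\not\equiv 0$. If $g(0)=\lambda=0$, the uniqueness of the initial value problem recorded after (\ref{3.2}) forces $g\equiv 0$; hence $\lambda\neq 0$, and since (\ref{3.1}) is odd in $g$ I may assume $\lambda>0$ and aim to show that such a $g$ cannot tend to $0$ at infinity.

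The engine of the proof is the identity obtained by multiplying (\ref{3.1}) by $r$ and regrouping the first two terms,
\[
(rg')'=r\Big[\alpha^{2}+\tfrac12\big(f^{2}(r)+g^{2}(r)-1\big)\Big]g(r).
\]
On the maximal interval $[0,R)$ on which $g>0$ I would track the sign of the bracket, which I rewrite as $\alpha^{2}-\tfrac12+\tfrac12 f^{2}+\tfrac12 g^{2}$. Since $f^{2},g^{2}\ge 0$, the bracket is bounded below by $\alpha^{2}-\tfrac12+\tfrac12 g^{2}$, and here is exactly where the hypothesis enters, through the operative inequality $\alpha^{2}\ge\tfrac12$, which makes $\alpha^{2}-\tfrac12\ge 0$ and so renders the bracket strictly positive wherever $g>0$. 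Because $rg'\to 0$ as $r\to 0^{+}$ (from $g'(0)=0$ and the expansion (\ref{3.3})), integrating $(rg')'>0$ from the origin yields $rg'(r)>0$, that is $g'(r)>0$, for every $r\in(0,R)$; thus $g$ is strictly increasing with $g(r)>\lambda$ on $(0,R)$.

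It then remains to upgrade this local monotonicity to a global contradiction. As $g$ is increasing and stays above $\lambda>0$, it can never return to $0$, so $R$ is not a zero of $g$; the solution therefore continues either to a blow‑up point, where $g\to+\infty$, or all the way to $r=\infty$, and in either case $\liminf_{r\to\infty}g(r)\ge\lambda>0$. This is incompatible with the boundary condition $g(\infty)=0$, so $\lambda>0$ is impossible, and by the odd symmetry neither is $\lambda<0$; hence $g\equiv 0$.

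I expect the main obstacle to be precisely the sign of the bracket near the vortex core. There both $f$ and $g$ are small, so the stabilizing contributions $\tfrac12 f^{2}$ and $\tfrac12 g^{2}$ are negligible and the sign is decided solely by $\alpha^{2}-\tfrac12$. This borderline quantity is exactly what separates the Abelian from the non‑Abelian regime, and controlling its sign is the delicate point on which the whole vanishing conclusion rests; away from the core the growth of $g^{2}$ together with the rise of $f$ toward $1$ only reinforces the positivity, so no further difficulty arises there.
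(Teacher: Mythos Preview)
Your argument is correct and rests on exactly the observation the paper uses: when $\alpha^{2}\ge\tfrac12$ one has $\alpha^{2}+\tfrac12(f^{2}+g^{2}-1)\ge\tfrac12(f^{2}+g^{2})>0$ wherever $g\ne 0$. The paper's proof is a one-line invocation of the maximum principle from this inequality, while you unwind that principle by hand via the divergence form $(rg')'>0$ to force $g'>0$ and a contradiction with $g(\infty)=0$; these are equivalent implementations of the same idea. (Note that both your argument and the paper's own proof actually require $\alpha\ge\sqrt{1/2}$, not the $\alpha\ge\tfrac12$ written in the lemma statement; this is a typo in the paper, and you have read the intended hypothesis correctly.)
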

\begin{proof}
In view of $\alpha\geq\sqrt{\frac{1}{2}}$, it is clear that for all $r>0$,
\ber\label{3.6}
\alpha^{2}+\frac{1}{2}(f^{2}+g^{2}-1)\geq\frac{1}{2}(f^{2}+g^{2})>0.
\eer
By the maximum principle, we have $g(r)=0$ for all $r\in(0,\infty)$.
\end{proof}

From now on we assume $\alpha<\frac{1}{2}$ and use $g(\lambda,r)$ to denote the solution to the initial value problem (\ref{3.1})-(\ref{3.1'}). Note that for $\lambda\geq\sqrt{1-2\alpha^{2}}$, by iteration we have
\ber\label{3.14}
g(r)\geq\sqrt{1-2\alpha^{2}},~g'(r)\geq0,~r>0,
\eer
which implies that the boundary condition $g(\infty)=0$ is not fulfilled. Then we consider $\lambda\in I=(0,\sqrt{1-2\alpha^{2}})$ and denote the solution as $g(\lambda,r)$. Define parameter sets as
\ber
\begin{aligned}
I_{+}=&\big\{\lambda\in I~|~\text{There exists some}~r_{0}\in(0,R_{\lambda})~\text{such that}
~g'(\lambda,r_{0})=0~\text{and}~g(\lambda,r)>0~\text{for any}~r\in(0,r_{0}]\big\},\notag\\
I_{0}=&\big\{\lambda\in I~|~g'(\lambda,r)<0\mbox{ and } g(\lambda,r)>0~\text{for all}~r>0\big\},\notag\\
I_{-}=&\big\{\lambda\in I~|~\text{There exists some}~r_{0}\in(0,R_{\lambda})~\text{such that}
~g(\lambda,r_{0})=0~\text{and}~g'(\lambda,r)<0~\text{for any}~r\in(0,r_{0}]\big\}.\notag
\end{aligned}
\eer
Clearly,
\ber
I_{+}\cap I_{0}=I_{+}\cap I_{-}=I_{0}\cap I_{-}=\emptyset,~I_{+}\cup I_{0}\cup I_{-}=I.
\eer

\begin{lemma}
Let $\alpha<\sqrt{\frac{1}{2}}$ and $f(r)$ be given in Proposition \ref{l1.1}. If $\lambda_{0}\in I_{0}$, then $g(\lambda_{0},r)$ is a solution to the initial value problem (\ref{3.1})-(\ref{3.1'}) with $g(\lambda_{0},\infty)=0$.
\end{lemma}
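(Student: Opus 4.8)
The plan is to follow the same pattern as the proof of Lemma~\ref{l3.5} (the $a$-case). Since $\lambda_0\in I_0$, the solution $g(\lambda_0,r)$ is positive and strictly decreasing on all of $(0,\infty)$; in particular $0<g(\lambda_0,r)<\lambda_0$, so it stays bounded and therefore cannot satisfy the blow-up alternative recorded after~(\ref{3.2}), i.e.\ $R_{\lambda_0}=\infty$. A bounded monotone function has a limit, hence $L:=\lim_{r\to\infty}g(\lambda_0,r)$ exists with $0\le L<\lambda_0<\sqrt{1-2\alpha^2}$. It remains only to show $L=0$, and I would argue by contradiction, assuming $L>0$.

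The key step is to rewrite equation~(\ref{3.1}) in divergence form as
\berr
(r g')'=r\,g\Big(\alpha^2+\tfrac12(f^2+g^2-1)\Big).
\eerr
Since $g^2\ge0$ and $f(r)\to1$ as $r\to\infty$, we have $\alpha^2+\tfrac12(f^2+g^2-1)\ge\alpha^2+\tfrac12(f^2-1)\to\alpha^2>0$, so there is an $r_\varepsilon>0$ with $\alpha^2+\tfrac12(f^2(r)+g^2(r)-1)\ge\tfrac{\alpha^2}{2}$ for all $r>r_\varepsilon$. Combining this with $g(\lambda_0,r)\ge L$ (valid since $g$ decreases to $L$) gives $(rg')'\ge\tfrac{L\alpha^2}{2}\,r$ on $(r_\varepsilon,\infty)$.

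Integrating this inequality from $r_\varepsilon$ to $r$ yields $r\,g'(\lambda_0,r)\ge r_\varepsilon g'(\lambda_0,r_\varepsilon)+\tfrac{L\alpha^2}{4}(r^2-r_\varepsilon^2)$, and dividing by $r$ gives $g'(\lambda_0,r)\ge\tfrac{L\alpha^2}{4}\,r+O(1/r)\to+\infty$ as $r\to\infty$. This forces $g'(\lambda_0,r)>0$ for large $r$, contradicting the defining property $g'(\lambda_0,r)<0$ of $I_0$. Hence $L=0$, i.e.\ $g(\lambda_0,\infty)=0$, so $g(\lambda_0,r)$ solves the initial value problem~(\ref{3.1})--(\ref{3.1'}) with the decay condition at infinity.

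There is no genuine obstacle here; the proof is a routine ODE comparison argument dual to Lemma~\ref{l3.5}. The only points needing care are the justification that $R_{\lambda_0}=\infty$ (which comes for free from boundedness) and the sign bookkeeping in the two integrations. If one wishes to avoid the crude estimate $\alpha^2+\tfrac12(f^2-1)\to\alpha^2$, one may instead retain the term $g^2\ge L^2$ and use $\alpha^2+\tfrac12(L^2-\varepsilon)>0$ for small $\varepsilon$; the conclusion is unchanged.
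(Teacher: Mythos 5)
Your proof is correct and follows essentially the same strategy as the paper: both note that monotonicity and boundedness give a limit $L\ge 0$, assume $L>0$, bound the right-hand side of the equation below by a positive constant for large $r$, and conclude that $g'$ must eventually turn positive, contradicting $\lambda_0\in I_0$. The only (cosmetic) difference is that you integrate the divergence form $(rg')'\ge cr$, whereas the paper drops the $-\tfrac1r g'$ term by its sign to get $g''\ge c$ directly.
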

\begin{proof}
We only need to prove that $g(\lambda_{0},r)$ enjoys the condition $g(\lambda_{0},\infty)=0$. The fact that the limit
\ber
\lim\limits_{r\rightarrow\infty}g(\lambda_{0},r)=l,
\eer
exists follows immediately from $\lambda_{0}\in I_{0}$. Obviously, $0\leq l<\sqrt{1-2\alpha^{2}}$.

Indeed, if $l>0$, for any $0<\varepsilon<l^{2}$, there exists a positive constant $R_{1}$ such that
\ber
f^{2}(r)>1-\varepsilon,~g^{2}(r)>l^{2},~r>R_{1}.
\eer
Then we see that
\ber
\begin{aligned}
g''(\lambda_0,r)&=-\frac{1}{r}g'(\lambda_0,r)+\left(\alpha^2+\frac{1}{2}(f^{2}+g^{2}-1)\right)g(\lambda_{0},r)\\
&\geq\left(\alpha^2+\frac{1}{2}(f^{2}+g^{2}-1)\right)g(\lambda_0,r)\\
&\geq\frac{l}{2}\left(\alpha^{2}+\frac{1}{2}(l^{2}-\varepsilon)\right),~r>R_{1}.
\end{aligned}
\eer
This implies that $g'(\lambda_{0},r)>0$ for some finite $r>R_1$, which contradicts the fact ${\lambda_{0}}\in I_{0}$. Thus $l=0$ follows.
\end{proof}

\begin{lemma}\label{l3.1}
Let $\alpha<\sqrt{\frac{1}{2}}$ and $f(r)$ be given in Proposition \ref{l1.1}. The set $I_{+}$ and $I_{-}$ are non-empty.
\end{lemma}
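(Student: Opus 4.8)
The plan is to exhibit explicit values of $\lambda$ (equivalently, qualitative limiting behaviors) that force membership in $I_{+}$ and in $I_{-}$ respectively, using the sign structure of the nonlinear term in (\ref{3.1}).

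First I would show $I_{-}\neq\emptyset$ by taking $\lambda$ close to the right endpoint $\sqrt{1-2\alpha^{2}}$. Write the coefficient as $Q(r)=\alpha^{2}+\tfrac{1}{2}(f^{2}(r)+g^{2}(r)-1)$, so that (\ref{3.1}) reads $(rg')'=rQ(r)g$. From the expansion (\ref{3.3}), $g''(\lambda,0^{+})=\tfrac{1}{2}\lambda\,Q(0)=\tfrac12\lambda\big(\alpha^{2}+\tfrac12(f^{2}(0)+\lambda^{2}-1)\big)$; since $f(0)=0$, this equals $\tfrac12\lambda\big(\alpha^{2}+\tfrac12(\lambda^{2}-1)\big)$, which is strictly negative precisely when $\lambda^{2}<1-2\alpha^{2}$, i.e.\ for every $\lambda\in I$. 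Hence $g$ starts out strictly decreasing. The key point is to show that for $\lambda$ sufficiently near $\sqrt{1-2\alpha^{2}}$ the solution actually reaches $0$ in finite $r$ while $g'$ stays negative up to that point. One clean way: suppose not; then $g(\lambda,r)$ stays positive and, being decreasing, has a limit $l\geq 0$ with $0<g(\lambda,r)\le \lambda$. If $l>0$ the argument of the preceding lemma already yields $g'(\lambda,r)>0$ eventually — contradiction — so the only alternative to $\lambda\in I_{-}$ with $g$ monotone is $l=0$, i.e.\ $\lambda\in I_{0}$. To rule this out for $\lambda$ near the endpoint, use continuous dependence on $\lambda$: at $\lambda=\sqrt{1-2\alpha^2}$ the constant $g\equiv\sqrt{1-2\alpha^2}$ solves (\ref{3.1}) only where $f\equiv0$, which is false, so in fact for $\lambda$ slightly below the endpoint $g$ stays so large on a long fixed interval $[0,R]$ that $Q(r)\le Q(0)+\text{(small)}<0$ is violated — instead one should compare with the autonomous equation obtained by freezing $f$ at its value near infinity. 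The cleaner route I would actually take: pick $\lambda\in I$ and note that as long as $g(\lambda,r)\ge\delta>0$ and $r$ is large (so $f^2\ge 1-\delta'$), we get $(rg')'=rQg$ with $Q\ge\alpha^2-\tfrac{\delta'}{2}>0$ once $g$ has dropped near $0$ — wait, that pushes toward $I_+$; so instead, for $\lambda$ near the endpoint, $g$ stays near $\sqrt{1-2\alpha^2}$ on an initial stretch where $Q<0$ forces $g'$ increasingly negative, $g'(\lambda,r)\le -cr$ for $r$ in a fixed interval $[r_1,r_2]$ with $c$ bounded below uniformly in $\lambda$; since $g(\lambda,0)\le\sqrt{1-2\alpha^2}$ is bounded, $g$ must hit $0$ before $g'$ can turn around, giving $\lambda\in I_{-}$.

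Next, $I_{+}\neq\emptyset$: take $\lambda>0$ very small. Then on any fixed interval $[0,R]$, by continuity $g(\lambda,r)$ stays small, so $g^{2}$ is negligible and $Q(r)\approx\alpha^{2}+\tfrac12(f^{2}(r)-1)$. For $r$ large, $f^{2}\to1$ and $Q\to\alpha^{2}>0$, so pick $R$ with $Q(r)\ge\tfrac{\alpha^2}{2}>0$ for $r\ge R$. On $[0,R]$, $g$ remains positive and bounded (for $\lambda$ small, by continuous dependence off the $\lambda=0$ solution $g\equiv0$), and beyond $R$ the relation $(rg')'=rQg\ge 0$ shows $rg'$ is nondecreasing; if $g'$ were to stay negative for all $r$ then $g$ would decrease to some $l\ge 0$, but if $l>0$ the earlier lemma's argument forces $g'>0$ eventually, while if $g$ hits $0$ we would instead have $\lambda\in I_-$. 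To land in $I_{+}$ I would instead argue directly: for $\lambda$ small, $g(\lambda,\cdot)$ stays uniformly small and positive on $[0,R]$ where possibly $Q<0$, but the amount of "downward push" is $O(\lambda)$ (since $g''=Qg=O(\lambda)$ there), so $g'(\lambda,R)=O(\lambda)$ is tiny; for $r>R$, $(rg')'=rQg>0$ forces $rg'$ to strictly increase from the value $Rg'(\lambda,R)=O(\lambda)$, and since $Q\ge\alpha^2/2$ and $g\ge$ a positive constant on a further fixed interval, $rg'$ increases by a fixed positive amount $\gg O(\lambda)$, hence $g'(\lambda,r_{0})=0$ for some finite $r_{0}$ with $g>0$ on $(0,r_0]$, i.e.\ $\lambda\in I_{+}$.

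The main obstacle is the second claim, establishing $I_{-}\neq\emptyset$: the sign of the coefficient $Q$ is not fixed (it is negative near $r=0$ and positive near $r=\infty$), so one cannot simply quote a maximum principle, and one must quantify that for $\lambda$ near $\sqrt{1-2\alpha^{2}}$ the initial negative slope accumulated while $g$ is still large is enough to drive $g$ to zero before the coefficient $Q$ turns positive and reverses the trend. I would handle this by a comparison argument on a fixed interval $[0,R_2]$ (chosen once and for all from $f$), getting a lower bound $-g'(\lambda,r)\ge c_0>0$ on a sub-interval with $c_0$ independent of $\lambda$ as $\lambda\uparrow\sqrt{1-2\alpha^2}$, together with the a priori ceiling $g\le\sqrt{1-2\alpha^2}$, forcing a zero of $g$; the uniqueness/smooth dependence results already recorded before the lemma supply the needed continuity in $\lambda$.
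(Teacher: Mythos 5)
You have the two halves of the shooting argument interchanged, and the quantitative claims you use to support each half do not hold; the correct picture is the mirror image of the one you propose. Your initial observation is fine: since $f(0)=0$ and $\lambda^{2}<1-2\alpha^{2}$, the coefficient $Q(r)=\alpha^{2}+\frac{1}{2}(f^{2}+g^{2}-1)$ is negative near $r=0$, so $(rg')'<0$ and $g$ starts out decreasing for every $\lambda\in I$. But $\lambda$ close to $\sqrt{1-2\alpha^{2}}$ lands in $I_{+}$, not $I_{-}$. Indeed $Q(0)=\alpha^{2}+\frac{1}{2}(\lambda^{2}-1)\rightarrow 0^{-}$ as $\lambda\uparrow\sqrt{1-2\alpha^{2}}$, so the initial downward push \emph{vanishes} in this limit; your asserted uniform bound $-g'(\lambda,r)\geq c_{0}>0$ with $c_{0}$ independent of $\lambda$ is false. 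Worse, by continuous dependence on $\lambda$ the solution stays uniformly close on compact intervals to the solution with $\lambda=\sqrt{1-2\alpha^{2}}$, which by (\ref{3.14}) satisfies $g\geq\sqrt{1-2\alpha^{2}}$ and $g'\geq0$ everywhere (and is strictly increasing once $f>0$). Hence for $\lambda$ near the endpoint one can fix $r_{1}$ with $g(\lambda,r_{1})>\lambda$ while $g(\lambda,\cdot)>0$ on all of $[0,r_{1}]$; since $g$ initially dips below $\lambda$, it must have a local minimum with positive value in $(0,r_{1})$, i.e.\ $\lambda\in I_{+}$. This is exactly how the paper populates $I_{+}$.

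Your argument that small $\lambda$ gives $I_{+}$ is also unsound, because every quantity in it scales linearly with $\lambda$: $g(\lambda,\cdot)/\lambda$ converges to the solution $h$ of the linearized equation $h''+\frac{1}{r}h'+\bigl(\frac{1}{2}(1-f^{2})-\alpha^{2}\bigr)h=0$, $h(0)=1$, $h'(0)=0$, so the statements ``$g\geq$ a positive constant on a further fixed interval'' and ``$rg'$ increases by a fixed positive amount $\gg O(\lambda)$'' are false --- both the accumulated negative slope and the subsequent recovery are $O(\lambda)$, and you cannot decide between $I_{+}$ and $I_{-}$ by comparing orders of magnitude in $\lambda$. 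The membership of small $\lambda$ is decided by whether $h$ vanishes before it turns around, which is a property of the linearized (Bessel-type) equation. The paper settles this by a Sturm--Picone comparison: on an interval $(0,\tilde r)$ where $\frac{1}{2}(1-f^{2})-\alpha^{2}\geq\mu>0$ (possible since $f(0)=0$ and $\alpha<\sqrt{1/2}$), one compares with $h''+\frac{1}{r}h'+\mu h=0$, whose solution oscillates, to conclude that $g(\lambda,\cdot)$ has a zero reached with $g'<0$, i.e.\ small $\lambda\in I_{-}$. In short: the endpoint $\lambda\uparrow\sqrt{1-2\alpha^{2}}$ furnishes $I_{+}$ via (\ref{3.14}) and continuity, the endpoint $\lambda\downarrow0$ furnishes $I_{-}$ via comparison with an oscillatory linear equation, and neither of your two arguments can be repaired without swapping the roles of the endpoints and replacing the order-of-magnitude reasoning by these structural facts.
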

\begin{proof}
Let us rewrite the equation (\ref{3.1}) as follows
\ber\label{3.4}
(rg'(r))'=rg(r)\left(\alpha^{2}+\frac{1}{2}(f^{2}(r)+g^{2}(r)-1)\right).
\eer
In view of $\lambda\in I$ and $f(0)=0$, we have
\ber
\alpha^{2}+\frac{1}{2}(f^{2}+g^{2}-1)<0
\eer
for small $r$, which leads to
\ber
(rg')'=rg\left(\alpha^{2}+\frac{1}{2}(f^{2}(r)+g^{2}(r)-1)\right)<0.
\eer
Hence $rg'(\lambda,r)$ is decreasing for small $r$, so $g'(\lambda,r)<0$ initially.

Since $g$ continuously depend on initial value $\lambda$, from (\ref{3.14}), we see that there exists $r_{1}$ such that $g(\lambda_{1},r_{1})\geq\sqrt{1-2\alpha^{2}}$ for $\lambda_{1}\in I$ close enough to $\sqrt{1-2\alpha^{2}}$. From $g'(\lambda,r)<0$ initially, there must be a local minimum point $r_{2}\in(0,r_{1})$ of $g(\lambda_{1},r)$ such that $g'(\lambda_{1},r_{2})=0$ and $g(\lambda_{1},r_{2})>0$, which implies $\lambda_1\in I_{+}$.

In the following, we are to show $I_{-}$ is nonempty. If $\lambda=0$, it is clear that $g(0,r)\equiv0$. So, if $\lambda_{2}$ close enough to 0, then $g(\lambda_{2},r)$ is small for any bounded range of $r$. In such a range, the term in $g^3$ in (\ref{3.4}) becomes negligible, and (\ref{3.4}) becomes effectively
\ber\label{3.7}
(rg')'+\left(\frac{1}{2}(1-f^{2})-\alpha^2\right)gr=0.
\eer
Since $f(0)=0$ and $\alpha<\sqrt{\frac{1}{2}}$, there exist positive constants $\mu$ and $\tilde{r}$ such that
\ber\label{2.8}
0<\mu<\frac{1}{2}(1-f^2)-\alpha,~0<r<\tilde{r}.
\eer
Consider the following initial value problem
\ber
&&h''(r)+\frac{1}{r}h'+\mu h=0,\\
&&h'(0)=0, h(0)=1.
\eer
Assume that $r_0$ is the first zero of the Bessel function $h(r)$. It follows that if $h(0)=\lambda$, then the first zero of $h(r)$ is $\lambda r_0$. Taking $\lambda_3\leq\lambda_2$ and $\lambda_3 r_0\leq\tilde{r}$. Using (\ref{2.8}) and Sturm-Picone comparison theorem,
there exists $r_{3}\leq\lambda_3 r_0$ such that $g(\lambda_{3},r_{3})=0$. Of course, $g'(\lambda_{3},r_{3})<0$ because otherwise for all $r>0$, we have $g(\lambda_{3},r)=0$ which is known to be impossible. Consequently, for any $r\in(0,r_{3}]$, $g'(\lambda_{3},r)<0$, which indicates that $I_{-}$ is nonempty. Continuity can ensure that two sets are open.
\end{proof}

By connectedness, we see that there must be some $\lambda\in I_0$. For this value of $\lambda$, we have that  $0<g(\lambda,r)<\sqrt{1-2\alpha^2}$, $g'(\lambda,r)<0$ for any $r>0$.

\begin{lemma}\label{l4.4}
Let $\alpha<\sqrt{\frac{1}{2}}$ and $f(r)$ be given in Proposition \ref{l1.1}. There holds the asymptotic decay estimate
\ber\label{3.11}
g(r)=O(\exp{(-\alpha(1-\varepsilon)r)}),~~r\rightarrow\infty.
\eer
\end{lemma}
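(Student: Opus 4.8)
The plan is to establish the decay of $g(r)$ by a comparison-function (supersolution) argument using the maximum principle, exactly in the spirit of Lemma~\ref{l6.2}. Since we already know from the preceding discussion that $g(r)>0$, $g'(r)<0$ for all $r>0$ and $g(\infty)=0$, the key point is to find a supersolution of the form $\eta(r)=c\exp(-\alpha(1-\varepsilon)r)$ that dominates $g$ for large $r$.

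First I would rewrite equation (\ref{1.2}) in the form
\ber
g''+\frac{1}{r}g'=\left(\alpha^{2}+\frac{1}{2}(f^{2}+g^{2}-1)\right)g.\notag
\eer
Because $f(r)\to1$ and $g(r)\to0$ as $r\to\infty$, for any fixed $0<\varepsilon<1$ there is a large $r_{\varepsilon}>0$ such that $\alpha^{2}+\frac12(f^{2}+g^{2}-1)\geq\alpha^{2}(1-\varepsilon)^{2}$ for all $r>r_{\varepsilon}$ (indeed the left side tends to $\alpha^{2}$). Next I would compute the equation satisfied by the difference $w=g-\eta$. Since $\eta''+\frac1r\eta'=\alpha^{2}(1-\varepsilon)^{2}\eta-\frac1r\alpha(1-\varepsilon)\eta<\alpha^{2}(1-\varepsilon)^{2}\eta$, one gets, for $r>r_{\varepsilon}$,
\ber
w''+\frac1r w'-\alpha^{2}(1-\varepsilon)^{2}w\;\geq\;\Bigl(\alpha^{2}+\tfrac12(f^{2}+g^{2}-1)-\alpha^{2}(1-\varepsilon)^{2}\Bigr)g\;\geq\;0.\notag
\eer
Then I would choose $c>0$ large enough that $w(r_{\varepsilon})=g(r_{\varepsilon})-\eta(r_{\varepsilon})\leq0$, note that $w(\infty)=0$, and invoke the maximum principle on $(r_{\varepsilon},\infty)$: a subsolution of an operator with nonnegative zeroth-order coefficient that is $\leq0$ on the boundary cannot have an interior positive maximum, so $w\leq0$, i.e. $g(r)\leq\eta(r)=c\exp(-\alpha(1-\varepsilon)r)$ for all $r>r_{\varepsilon}$, which is (\ref{3.11}).

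I would handle the $\tfrac1r w'$ term either by absorbing it (it has the "good" sign near a putative interior maximum since there $w'=0$) or by the standard substitution $g(r)=r^{-1/2}u(r)$ used in Lemma~\ref{l6.2}, which removes the first-order term at the cost of adding a harmless $\tfrac{3}{4}r^{-2}$ term that is positive and only helps the inequality; I expect the cleaner writeup to mimic Lemma~\ref{l6.2} verbatim. The main (and really only) obstacle is bookkeeping: making sure the threshold $r_{\varepsilon}$ is chosen so that \emph{both} $f^{2}-1$ and $g^{2}$ are small enough to guarantee $\alpha^{2}+\frac12(f^{2}+g^{2}-1)\geq\alpha^{2}(1-\varepsilon)^{2}$, which is immediate from $f(\infty)=1$, $g(\infty)=0$; everything else is the routine maximum-principle comparison already rehearsed in Section~3.
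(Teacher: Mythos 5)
Your proposal is correct and follows essentially the same route as the paper: the paper introduces the comparison function $\sigma(r)=c\exp(-\alpha(1-\varepsilon)r)$, derives the identical differential inequality $(g-\sigma)''+\frac{1}{r}(g-\sigma)'\geq\alpha^{2}(1-\varepsilon)^{2}(g-\sigma)$ for $r>r_{\varepsilon}$ (using $f\to1$, $g\to0$ to control the zeroth-order coefficient), and concludes by the maximum principle after choosing $c$ so that $(g-\sigma)(r_{\varepsilon})\leq0$. The only cosmetic difference is how the remainder terms are grouped before discarding them.
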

\begin{proof}
To get the decay estimate for $g(r)$, we introduce a comparison function
\begin{equation}\label{3.12}
\sigma(r)=c\exp{(-\alpha(1-\varepsilon)r)},
\end{equation}
where $c>0$ is a constant to be chosen later. A direct calculation shows that
\begin{equation}
\sigma''(r)+\frac{1}{r}\sigma'(r)=\left(\alpha^{2}(1-\varepsilon)^{2}-\frac{\alpha(1-\varepsilon)}{r}\right)\sigma(r),
\end{equation}
so that
\begin{align}
(g-\sigma)''+\frac{1}{r}(g-\sigma)'=&\left(\alpha^{2}(1-(1-\varepsilon)^{2})+\frac{1}{2}(f^{2}+g^{2}-1)\right)g+\frac{\alpha(1-\varepsilon)}{r}\sigma\notag\\
&+\alpha^{2}(1-\varepsilon)^{2}(g-\sigma).\notag
\end{align}
Since $f(r)\rightarrow1$ as $r\rightarrow\infty$, we see that there exists a sufficiently large $r_{\varepsilon}>0$ such that
\begin{equation}
\left(\alpha^{2}(1-(1-\varepsilon)^{2})+\frac{1}{2}(f^{2}+g^{2}-1)\right)g+\frac{\alpha(1-\varepsilon)}{r}\sigma\geq0\notag
\end{equation}
for any $r>r_{\varepsilon}$, which gives us
\begin{equation}\label{3.13}
(g-\sigma)''+\frac{1}{r}(g-\sigma)'\geq\alpha^{2}(1-\varepsilon)^{2}(g-\sigma), ~~r>r_{\varepsilon}.
\end{equation}
Choose the coefficient $c$ in (\ref{3.12}) large enough to make $(g-\sigma)(r_{\varepsilon})\leq0$. Using boundary condition $g(\infty)=0$
 and applying the maximum principle in (\ref{3.13}), we obtain $g(r)<\sigma(r)$ for all $r>r_{\varepsilon}$.
\end{proof}

\begin{lemma}
For $f(r)$ given in Proposition \ref{l1.1}, the solution to equation (\ref{3.1}) with the boundary condition $g'(0)=0$ and $g(\infty)=0$ is unique.
\end{lemma}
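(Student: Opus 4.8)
The plan is to prove uniqueness by a comparison argument exploiting the monotone structure of the nonlinearity, exactly as was done for $a(r)$. Suppose $g_1(r)$ and $g_2(r)$ are two solutions of \eqref{3.1} both satisfying $g_i'(0)=0$, $g_i(\infty)=0$. From the earlier analysis (the construction via $I_0$, together with Lemma \ref{l3.1}) each $g_i$ is strictly decreasing with $0<g_i(r)<\sqrt{1-2\alpha^2}$ for all $r>0$, so in particular $f^2+g_i^2-1<0$ wherever it matters and both solutions are uniformly bounded. Set $w(r)=g_1(r)-g_2(r)$; then $w(0)$ is finite, $w'(0)=0$, $w(\infty)=0$, and subtracting the two copies of \eqref{3.1} gives
\begin{equation}
w''+\frac{1}{r}w'-\alpha^2 w-\frac{1}{2}\bigl(f^2+g_1^2+g_1 g_2+g_2^2-1\bigr)w=0,
\end{equation}
where I have used $g_1^3-g_2^3=(g_1^2+g_1g_2+g_2^2)w$. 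Writing the coefficient of $w$ as $-Q(r)w$ with $Q(r)=\alpha^2+\tfrac12(f^2+g_1^2+g_1g_2+g_2^2-1)$, the equation becomes $(rw')'=rQ(r)w$.

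The key step is to rule out $w\not\equiv 0$ by a maximum-principle / first-extremum argument. Assume $w(r_0)\neq 0$ for some $r_0$; without loss of generality $\sup_{r>0} w(r)>0$ (otherwise replace $w$ by $-w$). Since $w$ is continuous on $[0,\infty)$, $w(\infty)=0$, and $w'(0)=0$, the positive supremum is attained at some interior point $r_1\in(0,\infty)$ where $w(r_1)>0$, $w'(r_1)=0$, and $w''(r_1)\le 0$. If $Q(r_1)\ge 0$ we get an immediate contradiction from $w''(r_1)=\tfrac1{r_1}w'(r_1)\cdot(-1)\cdot 0 \dots$ — more precisely from the ODE, $w''(r_1)=-\tfrac1{r_1}w'(r_1)+Q(r_1)w(r_1)=Q(r_1)w(r_1)$, which is $\ge 0$, contradicting $w''(r_1)\le 0$ unless $w''(r_1)=0$ and $Q(r_1)=0$; in the degenerate case one pushes to the next order or invokes the uniqueness theorem for the initial value problem started at $r_1$ to conclude $w\equiv 0$, contradicting $w(\infty)=0$ versus $w(r_1)>0$. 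The real obstacle is that $Q(r)$ need not be nonnegative everywhere — indeed $Q<0$ near $r=0$ since $f(0)=0$ and $g_i(0)<\sqrt{1-2\alpha^2}$ — so a naive global maximum principle fails.

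To handle the region where $Q<0$, I would use the monotonicity of $g_i$ and a weighted test-function (energy) identity rather than a pointwise principle. Multiply $(rw')'=rQw$ by $w$ and integrate over $(0,R)$: using $w'(0)=0$, the boundedness of $w$ near $0$, and $w(\infty)=0$, one gets $\int_0^\infty r\,(w')^2\,dr + \int_0^\infty rQ(r) w^2\,dr = 0$ after letting $R\to\infty$ (the boundary term $Rw'(R)w(R)$ vanishes along a suitable sequence because $w$ and $w'$ decay exponentially by the comparison estimate of Lemma \ref{l4.4} applied to each $g_i$). The difficulty is that $\int rQw^2$ is not sign-definite; to close the argument I would instead substitute $w=g_1 \psi$ (a Riccati-type / ground-state substitution, legitimate since $g_1>0$ on $(0,\infty)$), using that $g_1$ itself solves $(rg_1')'=r(\alpha^2+\tfrac12(f^2+g_1^2-1))g_1$. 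A computation gives $(r g_1^2\psi')' = r g_1^2\bigl[\tfrac12(g_1 g_2 + g_2^2 - \ ?\ )\bigr]\psi \dots$; more cleanly, the difference of the two ODEs written in the form $(rg_i')'/(rg_i) = \alpha^2+\tfrac12(f^2+g_i^2-1)$ shows the Wronskian $W(r)=r(g_1'g_2-g_1g_2')$ satisfies $W'(r)=\tfrac12 r\, g_1 g_2 (g_1^2-g_2^2) = \tfrac12 r g_1 g_2 (g_1+g_2) w$. Then $W(0)=0$ and $W(\infty)=0$ (exponential decay), so $\int_0^\infty \tfrac12 r g_1 g_2(g_1+g_2) w\,dr = 0$; combining this with the sign structure — if $w\ge 0$ (respectively $\le 0$) everywhere the integrand is one-signed, forcing $w\equiv 0$ — reduces matters to showing $w$ cannot change sign, which follows from the first-extremum analysis above applied at a sign change together with the interior-point argument. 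I expect the main obstacle to be making the boundary terms at $r=0$ and $r=\infty$ rigorous (the $1/r$ singularity at the origin and the need for the exponential decay of $g_i$, $g_i'$, $w$, $w'$), and cleanly organizing the sign analysis so that the indefiniteness of $Q$ is circumvented by the Wronskian identity rather than confronted directly.
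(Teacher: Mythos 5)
Your proposal never settles on a single complete argument, and the one concrete route it commits to --- the Wronskian identity --- is left with a genuine gap at the decisive step. Having derived $W(r)=r(g_1'g_2-g_1g_2')$ with $W'=\tfrac12 r\,g_1g_2(g_1^2-g_2^2)$ and $W(0)=W(\infty)=0$, you correctly reduce everything to showing that $w=g_1-g_2$ cannot change sign, but you then assert this ``follows from the first-extremum analysis above.'' That is exactly the argument you had already (rightly) observed fails wherever $Q(r)=\alpha^2+\tfrac12(f^2+g_1^2+g_1g_2+g_2^2-1)<0$, which happens precisely near $r=0$ where a sign change would have to be excluded; moreover, at a first zero of $w$ there is no interior extremum to apply it to. The gap is fillable with the tool you already have on the table: if $g_1(0)>g_2(0)$ then $w>0$ near $0$, so $W'>0$ on $(0,r_c)$ and hence $W(r_c)>0$ at the first zero $r_c$ of $w$; but $W(r_c)=r_c\,g_2(r_c)\,w'(r_c)$ then forces $w'(r_c)>0$, contradicting $w>0$ on $(0,r_c)$ with $w(r_c)=0$. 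Hence $w$ never vanishes, and then $0=W(\infty)-W(0)=\int_0^\infty \tfrac12 r g_1g_2(g_1+g_2)w\,\mathrm{d}r>0$ is absurd, so $g_1(0)=g_2(0)$ and uniqueness for the initial value problem finishes. As written, though, the proposal stops short of this.

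For comparison, the paper's proof avoids all sign analysis via a Brezis--Oswald identity: writing $(rg_i')'/g_i=r\big(\alpha^2+\tfrac12(f^2+g_i^2-1)\big)$, subtracting, multiplying by $g_2^2-g_1^2$ and integrating by parts (boundary terms vanish by (\ref{3.3}) and (\ref{3.11})) yields
$$\int_0^\infty r\left\{\left(g_1'-\frac{g_1g_2'}{g_2}\right)^2+\left(g_2'-\frac{g_2g_1'}{g_1}\right)^2\right\}\mathrm{d}r=-\frac12\int_0^\infty r\,(g_1^2-g_2^2)^2\,\mathrm{d}r,$$
whose two sides have opposite signs, so both vanish and $g_1\equiv g_2$. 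This is essentially the sign-definite energy identity you searched for and discarded: the right multiplier is $(g_2^2-g_1^2)/g_i$ rather than $w$ itself. Note finally that both your argument and the paper's require $g_1,g_2>0$ on $(0,\infty)$ (you divide by $g_2(r_c)$, the paper divides by $g_i$); since $g\equiv 0$ also satisfies the stated boundary conditions, the uniqueness must be understood as uniqueness among positive solutions.
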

\begin{proof}
Suppose to the contrary that there are two different solutions $g_{1}$ and $g_{2}$ satisfying
\ber
g'_{1}(0)=g'_{2}(0)=0,~~~g_{1}(\infty)=g_{2}(\infty)=0.
\eer
Let us rewrite the equation (\ref{3.1}) as follows
\begin{equation}
\frac{(rg')'}{g}=r\left(\alpha+\frac{1}{2}(f^{2}+g^{2}-1)\right).
\end{equation}
Then there holds
\ber\label{3.17}
\frac{(rg_{1}')'}{g_{1}}-\frac{(rg_{2}')'}{g_{2}}=\frac{1}{2}r(g_{1}^{2}-g_{2}^{2}).
\eer
As in \cite{bh}, multiplying (\ref{3.17}) through by $g_{2}^{2}-g_{1}^{2}$ and integrating from zero to infinity, we obtain the identity
\ber
\int_{0}^{\infty}r\left\{\left(g'_{1}-\frac{g_{1}g'_{2}}{g_{2}}\right)^{2}+\left(g'_{2}-\frac{g_{2}g'_{1}}{g_{1}}\right)^{2}\right\}\mathrm{d}r
=-\frac{1}{2}\int_{0}^{\infty}r(g_{1}^{2}-g_{2}^{2})^{2}\mathrm{d}r.
\eer
by using (\ref{3.3}) and (\ref{3.11}). Obviously, we conclude that $g_{1}=g_{2}$ for all $r>0$. Then the lemma follows.
\end{proof}

\section{Existence and uniqueness of $\tilde{f}(r)$}
\setcounter{equation}{0}
In this section we focus on the equation that governs the $f$-component to prove the following proposition.
\begin{proposition}\label{l1.3}
Given functions $f(r)$ as in Proposition \ref{l1.1}, the associated functions $a(r)$ and $g(r)$ obtained in Proposition \ref{l1.1} and \ref{l1.2}, we can find a unique function $\tilde{f}(r)$ satisfying (\ref{1.3}) with boundary conditions $\tilde{f}(0)=0$, $\tilde{f}(\infty)=1$. More precisely, $\tilde{f}(r)$ is strictly increasing, $0<\tilde{f}(r)<1$ for all $r>0$, $r^{-1}\tilde{f}\leq M_{3}$ for $r\leq1$, where $M_{3}$ is a positive constant.
\end{proposition}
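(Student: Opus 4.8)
\textbf{Proof proposal for Proposition \ref{l1.3}.}

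The plan is to mimic the single-parameter shooting argument used for $a(r)$ in Section 3, now applied to equation (\ref{1.3}) with the shooting parameter being the initial slope at the origin. Since $f(0)=0$ is forced, I would consider the initial value problem for (\ref{1.3}) with $\tilde f(0)=0$ and prescribed behaviour $\tilde f(r)\sim \mu r$ as $r\to 0$ (equivalently, shoot on $\mu=\lim_{r\to 0}\tilde f(r)/r>0$), convert it to an integral equation of the form
\begin{equation}
\tilde f(r)=\mu r+\int_0^r s\Big(\ln r-\ln s\Big)\tilde f(s)\left(\frac{a^2(s)}{s^2}+\frac{1}{2}\big(f^2(s)+g^2(s)-1\big)\right)\mathrm{d}s,\notag
\end{equation}
and run Picard iteration to get a unique local solution $\tilde f(\mu,r)$ on a maximal interval $[0,R_\mu)$, using the bounds $0\le a(r)\le 1$, $a(r)=1+O(r^2)$ (hence $a^2/r^2$ is bounded near $0$ by Proposition \ref{l1.1}), $0\le g(r)\le\sqrt{1-2\alpha^2}$ from Proposition \ref{l1.2}, and the given $f$. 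Then I would partition $(0,\infty)$ into the three shooting sets $\mathcal{M}^-$ (solutions that turn over below $1$ while staying positive, i.e. $\tilde f'$ vanishes with $0<\tilde f<1$), $\mathcal{M}^0$ ($0<\tilde f<1$, $\tilde f'>0$ for all $r$), $\mathcal{M}^+$ (solutions reaching the value $1$ with $\tilde f'\ge 0$ up to that point), show they are disjoint with union $(0,\infty)$, that $\mathcal{M}^-$ and $\mathcal{M}^+$ are open and nonempty, and conclude by connectedness that $\mathcal{M}^0\neq\emptyset$.

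The key steps in order: (1) local existence/uniqueness and the expansion $\tilde f(\mu,r)=\mu r+O(r^3)$; (2) the analogue of Lemma \ref{l.0}, the trichotomy of $\mathcal{M}^-,\mathcal{M}^0,\mathcal{M}^+$ — here I would use the rewritten form $(r\tilde f')'=r\tilde f\big(\frac{a^2}{r^2}+\frac12(f^2+g^2-1)\big)$ together with the uniqueness theorem to exclude a tangential crossing of the level $1$, exactly as in the proof of Lemma \ref{l.0}; (3) $\mathcal{M}^+\neq\emptyset$: for $\mu$ large the linear-in-$\tilde f$ terms dominate and $\tilde f$ overshoots $1$ quickly (a rescaling $r=\mu^{-1}t$ analogous to (\ref{2.10})–(\ref{2.12}) should work, reducing to $(t\hat f')'=0$, $\hat f\sim t$); (4) $\mathcal{M}^-\neq\emptyset$: for $\mu$ small, $\tilde f$ stays small over bounded ranges, the cubic term is negligible, and since the coefficient $\frac{a^2}{r^2}+\frac12(f^2+g^2-1)$ is strictly negative near $r=0$ (because $a(0)=1$ so $a^2/r^2\to\infty$... — actually $a^2/r^2\sim 1/r^2>0$, so one must instead argue as in Lemma \ref{2.0}, comparing with the $\mu=0$ trajectory which stays negative, or note $\tilde f$ concave where the bracket is negative) one forces a turning point below $1$; (5) for $\mu_0\in\mathcal{M}^0$, the limit $L=\lim_{r\to\infty}\tilde f(\mu_0,r)\in(0,1]$ exists by monotonicity and boundedness, and a contradiction argument (if $L<1$ then using $f(\infty)=1$, $g(\infty)=0$, $a(\infty)=0$ the bracket becomes positive and bounded away from $0$ at infinity, forcing $\tilde f''$ bounded below by a positive constant, hence $\tilde f'\to\infty$, contradicting $\tilde f\le 1$) pins $L=1$; (6) strict monotonicity and $0<\tilde f<1$ follow as in Section 3; (7) the bound $r^{-1}\tilde f\le M_3$ for $r\le 1$ follows from the integral equation once $\mu_0$ is bounded above (which holds because large $\mu$ lies in $\mathcal{M}^+$, as in Lemma \ref{ll1}), using $a^2/s^2\le M$, $0<g<\sqrt{1-2\alpha^2}$, $0<f<1$ to bound the integrand; (8) uniqueness of the two-point boundary value solution via a maximum-principle argument on the difference $\psi=\tilde f_2-\tilde f_1$ satisfying $\psi''+\frac1r\psi'-\frac{a^2}{r^2}\psi-\frac12(f^2+g^2-1+\tilde f_1^2+\tilde f_1\tilde f_2+\tilde f_2^2)\psi=0$ — if $\psi$ has an interior positive maximum the equation is violated provided the zeroth-order coefficient is nonpositive there, which needs $f^2+g^2+\tilde f_1^2+\tilde f_1\tilde f_2+\tilde f_2^2\ge 1$ at that point; if that fails one argues near the origin or compares with the first-solution-uniqueness-style identity.

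The main obstacle I anticipate is step (4), showing $\mathcal{M}^-\neq\emptyset$, and step (8), the uniqueness, both complicated by the singular potential term $a^2/r^2$ which is \emph{positive} (unlike the clean sign structure in the $a$-equation). For $\mathcal{M}^-$ one cannot simply say the bracket is negative near $0$; instead I would track the sign of $(r\tilde f')'$ globally for small $\mu$: on any fixed interval $[\rho,R]$ bounded away from the origin the coefficient $\frac{a^2}{r^2}+\frac12(f^2+g^2-1)$ is a fixed continuous function independent of $\mu$, and on a suitable such interval it is strictly negative (since $f<1$, $g$ can be made subdominant, and $a^2/r^2$ is small there because $a(\infty)=0$ — more precisely for $r$ in a mid-range where $f$ is still not close to $1$), so a Sturm–Picone comparison with the Bessel-type equation as in Lemma \ref{l3.1} forces $\tilde f$ to develop a turning point; alternatively, compare directly with the solution at $\mu=0$, which is identically $0$, and use continuous dependence to get a nearby trajectory that dips. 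For uniqueness, the safest route is the Hardy-type / quotient identity as used for $g$ (multiply the equation for the difference of quotients by an appropriate factor and integrate, using the decay rates $\tilde f=O(r)$ at $0$ and $1-\tilde f$ exponentially small at $\infty$), which sidesteps the sign issue entirely. Once these two points are handled, the remaining steps are routine adaptations of Section 3 and the $g$-section, and the asymptotic estimate $\tilde f(r)=1+O(\exp(-\gamma(1-\varepsilon)r))$ with $\gamma=\min\{1,2\alpha,2\beta\}$ follows by a comparison-function argument on $1-\tilde f$ using the exponential decay of $a$, $g$ already established in Lemmas \ref{l6.2} and \ref{l4.4}.
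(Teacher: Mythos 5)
Your overall architecture is exactly the paper's (Section 5): shoot on the slope $D$ at the origin, form the trichotomy $\mathcal{D}^{-},\mathcal{D}^{0},\mathcal{D}^{+}$, prove the outer two sets are open and nonempty (rescaling $t=Dr$ for $\mathcal{D}^{+}$, a Sturm--Picone comparison for $\mathcal{D}^{-}$), use connectedness, pin the limit $\tilde f(\infty)=1$ by the same contradiction, bound the shooting parameter from above to get $r^{-1}\tilde f\le M_3$, and prove uniqueness by the Brezis--Oswald quotient identity (your "fallback" in step (8) is precisely what the paper does, since the $a^2/r^2$ and $g^2$ terms cancel in the difference of quotients).

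There is, however, a concrete gap in your step (1) that also propagates into step (3). Your integral equation uses the logarithmic kernel $s(\ln r-\ln s)$, i.e.\ the Green's function of $u''+u'/r$, while keeping the full singular coefficient $a^{2}/s^{2}$ in the integrand. Since $a(0)=1$, with $\tilde f_{0}=\mu r$ the first Picard correction contains $\mu\int_{0}^{r}\ln(r/s)\,a^{2}(s)\,\mathrm{d}s\approx\mu r$, the same order as the leading term; worse, if $|\tilde f_{n}-\tilde f_{n-1}|\le Cs$ then $|\tilde f_{n+1}-\tilde f_{n}|\le C\int_{0}^{r}\ln(r/s)\,\mathrm{d}s=Cr$, so the iteration is not a contraction on any interval $[0,\delta]$ and the expansion $\tilde f=\mu r+O(r^{3})$ does not follow. (Your parenthetical ``hence $a^{2}/r^{2}$ is bounded near $0$'' is false; it is $(a^{2}-1)/r^{2}$ that is bounded, by Proposition \ref{l1.1}.) The fix is the paper's splitting (\ref{4.1})--(\ref{4.2}): move the exact singular term $-\tilde f/r^{2}$ into the principal part, so the fundamental solutions become $r$ and $r^{-1}$, the kernel becomes $\tfrac{r}{2}\left(1-s^{2}/r^{2}\right)$, and the remaining coefficient $(a^{2}-1)/s^{2}$ is bounded on $[0,1]$ because $|a-1|\le M_{2}r^{2}$. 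The same splitting is what makes step (3) work: under $t=Dr$ the term $-\hat f/t^{2}$ is scale invariant and survives, so the limiting problem is $\hat f''+\tfrac1t\hat f'-\tfrac{1}{t^{2}}\hat f=0$ with solution $\hat f=t$, not $(t\hat f')'=0$, whose solutions $c_{1}+c_{2}\ln t$ cannot behave like $t$; only the bounded remainder $(\hat a^{2}-1)/t^{2}=O(D^{-2})$ disappears in the limit. For step (4), once the $-1/r^{2}$ term is split off, the paper compares globally from $r=0$ with the order-one Bessel equation $h_{1}''+\tfrac1r h_{1}'+(\alpha^{2}-\tfrac{1}{r^{2}})h_{1}=0$, $h_{1}\sim r$, using $a^{2}<1$ and $g^{2}<1-2\alpha^{2}$; this is cleaner than your mid-range-interval variant and resolves the difficulty you flagged. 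With these corrections your outline coincides with the paper's proof.
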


Let us rewrite (\ref{1.3}) as
\begin{equation}\label{4.1}
\tilde{f}''(r)+\frac{1}{r}\tilde{f}'(r)-\frac{1}{r^{2}}\tilde{f}(r)=\frac{1}{r^{2}}(a^{2}(r)-1)\tilde{f}(r)+\frac{1}{2}(\tilde{f}^{2}(r)+g^{2}(r)-1)\tilde{f}(r).
\end{equation}
From $\tilde{f}(0)=0$, (\ref{4.1}) can be converted into the integral equation
\begin{equation}\label{4.2}
\tilde{f}(r)=Dr+\frac{1}{2}\int_{0}^{r}r\left(1-\frac{s^{2}}{r^{2}}\right)\tilde{f}(s)\left\{\frac{1}{s^{2}}(a^{2}(s)- 1)+\frac{1}{2}(\tilde{f}^{2}(s)+g^{2}(s)-1)\right\}\mathrm{d}s,
\end{equation}
where $D>0$ is an arbitrary constant.

%For given functions $(f(r),a(r),g(r))$ as in Proposition \ref{l1.3}, the equation (\ref{4.2}) can be solved by Picard iteration to give a unique locally continuous solution at least for $r$ sufficiently small, more precisely,

\begin{lemma}
 Let $D>0$ and $a(r)$, $g(r)$ be obtained in Proposition \ref{l1.1} and \ref{l1.2}, respectively. Then the equation (\ref{4.1}) subject to the initial condition $\tilde{f}(0)=0$ can be solved at least for sufficiently small $r$, more precisely,
\ber\label{4.3}
\tilde{f}(r)=Dr+O(r^{3}),~~r\rightarrow0.
\eer
\end{lemma}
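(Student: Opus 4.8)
The plan is to solve the integral equation \eqref{4.2} near $r=0$ by Picard iteration, exactly as was done for $\tilde a$ in Lemma \ref{l1} and for $g$ near $r=0$. First I would set up the iteration scheme
\berr
\tilde f_0(r)=Dr,\qquad
\tilde f_{n+1}(r)=Dr+\frac12\int_0^r r\left(1-\frac{s^2}{r^2}\right)\tilde f_n(s)\left\{\frac{1}{s^2}(a^2(s)-1)+\frac12(\tilde f_n^2(s)+g^2(s)-1)\right\}\dd s,
\eerr
for $n=0,1,2,\dots$. The key structural point is that the apparently singular kernel $s^{-2}(a^2(s)-1)$ is in fact bounded near the origin: by Proposition \ref{l1.1} we have $|a(r)-1|\le M_2 r^2$ for $r\le 1$, hence $|a^2(s)-1|=|a(s)-1||a(s)+1|\le 2M_2 s^2$, so $s^{-2}|a^2(s)-1|\le 2M_2$ on $(0,1]$. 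Likewise $g$ is continuous and bounded (Proposition \ref{l1.2}), so the bracket $\{\cdots\}$ is bounded by an absolute constant, say $K$, on a fixed interval $[0,r_0]$ with $r_0\le 1$, provided the iterates stay bounded.

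Next I would run the standard two-part estimate. The weight $r(1-s^2/r^2)=r-s^2/r$ satisfies $0\le r-s^2/r\le r$ for $0\le s\le r$, so each integration against it gains a power of $r$ and the $\tilde f_n$ factor. From this one gets, for $r\le r_0$ small enough (chosen in terms of $D$, $K$, $M_2$), first the uniform bound $|\tilde f_n(r)|\le 2Dr$ for all $n$ by induction — using $\frac12\int_0^r (r-s^2/r)\,K\cdot 2Ds\,\dd s\le \tfrac12 K D r^3\le Dr$ once $r_0^2\le 1/K$ — and then, for the differences,
\berr
|\tilde f_{n+1}(r)-\tilde f_n(r)|\le \frac{K'}{2}\int_0^r (r-s^2/r)\,|\tilde f_n(s)-\tilde f_{n-1}(s)|\,\dd s,
\eerr
where $K'$ absorbs the Lipschitz constant of the bracket in $\tilde f$ on the range $|\tilde f|\le 2Dr_0$ (the cubic term is locally Lipschitz). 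Iterating this bound gives a geometric-type decay $|\tilde f_{n+1}-\tilde f_n|\le C\,(c r)^{n}r^{3}$ with $c r_0<1$ after shrinking $r_0$, so $\tilde f_n$ converges uniformly on $[0,r_0]$ to a continuous limit $\tilde f$ solving \eqref{4.2}, hence \eqref{4.1}, on $[0,r_0]$. Standard ODE theory then extends the solution uniquely to a maximal interval $[0,R_D)$.

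Finally, the asymptotic expansion \eqref{4.3} follows by plugging the bound $|\tilde f(s)|\le 2Ds$ and the bracket bound $|\{\cdots\}|\le K$ back into \eqref{4.2}: the integral term is at most $\frac12\int_0^r (r-s^2/r)K\cdot 2Ds\,\dd s=O(r^3)$, so $\tilde f(r)=Dr+O(r^3)$ as $r\to 0$. The main obstacle — and the only place any real care is needed — is verifying that the $r^{-2}(a^2-1)$ term is genuinely harmless; this is exactly where the quantitative estimate $|a-1|\le M_2 r^2$ from Proposition \ref{l1.1} is essential, and it is the reason that estimate was recorded there. Everything else is the routine contraction-mapping bookkeeping already rehearsed twice in the paper.
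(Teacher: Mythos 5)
Your proposal is correct and follows essentially the same route as the paper: Picard iteration on the integral equation \eqref{4.2}, with the bound $|a(r)-1|\le M_2 r^2$ from Proposition \ref{l1.1} neutralizing the apparent singularity of $s^{-2}(a^2-1)$, the inductive bound $|\tilde f_n(r)|\le 2Dr$, and the difference estimates yielding uniform convergence and the $O(r^3)$ remainder. The only cosmetic difference is that the paper's iterated difference bound carries a factorial-type factor rather than your geometric one; both suffice.
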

\begin{proof}
The integral equation (\ref{4.2}) can be solved by Picard iteration. To this end, we denote
\begin{align}
\tilde{f}_{0}(r)&=Dr,\notag \\
\tilde{f}_{n}(r)&=Dr+\frac{1}{2}\int_{0}^{r}r\left(1-\frac{s^{2}}{r^{2}}\right)\tilde{f}_{n-1}\left\{\frac{1}{s^{2}}(a^{2}-1)+\frac{1}{2}(\tilde{f}_{n-1}^{2}+g^{2}-1)\right\}\mathrm{d}s,~n=1,2,...\notag
\end{align}

Since for any $r\leq1$, $|a(r)-1|\leq M_2r^{2}$, we obtain the following upper bound estimate,
\begin{align}
|\tilde{f}_{1}-\tilde{f}_0|&\leq \frac{1}{2}\int_{0}^{r}r(1-\frac{s^{2}}{r^{2}})Ds\left(2M_2+\frac{1}{2}(D^{2}s^{2}+1)\right)\mathrm{d}s\notag\\
&\leq\frac{r}{2}\int^{r}_{0}(2M_2+1)Ds\mathrm{d}s\notag\\
&\leq (2M_2+1)\frac{D}{4}r^3,~~~r\leq\min\left\{1,\frac{1}{D}\right\}. \notag
\end{align}
It follows that for any $r\leq\min\left\{1,\frac{1}{D},\sqrt{\frac{4}{2M_2+1}}\right\}$,
\ber
|\tilde{f}_1|\leq Dr+(2M_2+1)\frac{D}{4}r^3\leq2Dr.\notag
\eer
Similarly, we have
\begin{align}
|\tilde{f}_{2}-\tilde{f}_0|&\leq \frac{1}{2}\int_{0}^{r}r(1-\frac{s^{2}}{r^{2}})2Ds\left(2M_2+\frac{1}{2}(4D^{2}s^{2}+1)\right)\mathrm{d}s\notag\\
&\leq \frac{r}{2}\int^{r}_{0}2D\left(2M_2+\frac{5}{2}\right)s\mathrm{d}s\notag\\
&\leq \left(2M_2+\frac{5}{2}\right)\frac{D}{2}r^3,~~~r\leq\min\left\{1,\frac{1}{D},\sqrt{\frac{4}{2M_2+1}}\right\}. \notag
\end{align}
Then there holds
\ber
|\tilde{f}_2|\leq Dr+\left(2M_2+\frac{5}{2}\right)\frac{D}{2}r^3\leq2Dr,
\eer
for any $r\leq\min\left\{1,\frac{1}{D},\sqrt{\frac{2}{2M_2+\frac{5}{2}}}\right\}$. As a consequence, we find
\begin{align}
|\tilde{f}_n-\tilde{f}_0|&\leq\left(2M_2+\frac{5}{2}\right)\frac{D}{2}r^3,~n=1,2,...\label{4.4}\\
|\tilde{f}_n|&\leq2Dr,~n=1,2,...\label{4.5}
\end{align}
for any $r\leq\min\left\{1,\frac{1}{D},\sqrt{\frac{2}{2M_2+\frac{5}{2}}}\right\}$.

Using (\ref{4.4}) and (\ref{4.5}), a simple calculation shows
\begin{align}
|\tilde{f}_{n+1}-\tilde{f}_{n}|
&\leq\frac{1}{2}\int_{0}^{r}r(1-\frac{s^{2}}{r^{2}})|f_{n}-f_{n-1}|
\left(\frac{|a^2-1|}{s^2}+\frac{1}{2}(f_{n}^{2}+f_{n}f_{n-1}+f_{n-1}^{2})+\frac{1}{2}(g_0^2-1)\right)\mathrm{d}s\notag\\
&\leq\frac{r}{2} \int^{r}_{0}\left(2M_2+\frac{13}{2}\right)|f_{n}-f_{n-1}|\notag\\
&\leq (2M_2+1)\frac{Dr}{2}\left(\frac{M_{**}}{2}\right)^n\frac{r^{2(n+1)}}{2(n+1)!!}\notag\\
&\leq\frac{(\frac{M_{**}}{2}r^2)^{n+1}}{2(n+1)!!},~~~n=1,2...\notag
\end{align}
where $M_{**}=2M_2+\frac{13}{2}$. Setting
\ber\label{4.6}
\delta_0=\min\left\{1,\frac{1}{D},\sqrt{\frac{2}{M_{**}}}\right\}.
\eer
In summary, we obtain
\ber
\sum\limits_{n=0}^{\infty}(\tilde{f}_{n+1}-\tilde{f}_{n})\leq\sum\limits_{n=1}^{\infty}\frac{(\frac{M_{**}}{2}r^2)^{n+1}}{2(n+1)!!},~r\leq\delta_0.
\eer
It is seen that the series $\tilde{f}_{0}+\sum\limits_{n=1}^{\infty}(\tilde{f}_{n}-\tilde{f}_{n-1})$ converges uniformly on $[0,\delta_0]$. Hence we may assume
$\tilde{f}_{n}\rightarrow\tilde{f}$ as $n\rightarrow\infty$ in $[0,\delta_0]$. Furthermore, The solution $\tilde{f}(r)$ can be extended uniquely to a maximal interval $[0, R_{D})$. Taking $n\rightarrow\infty$ on (\ref{4.4}), we obtain (\ref{4.3}) as expected.
\end{proof}

We now denote the solution as $\tilde{f}(D,r)$ and define parameter sets as follows.
\ber
\begin{aligned}
\mathcal{D}^{-}=&\big\{D>0~|~\text{There exists a}~r_{*}\in(0,R_{D})~\text{such that}
~\tilde{f}'(D,r_{*})<0~\text{and for any}~r\in(0,r_{*}],0<\tilde{f}(D,r)<1\big\},\notag\\
\mathcal{D}^{0}=&\big\{D>0~|~\tilde{f}'(D,r)\geq0\mbox{ and } 0<\tilde{f}(D,r)<1~\text{for all}~r>0\big\},\notag\\
\mathcal{D}^{+}=&\big\{D>0~|~\mbox{There exists a}~r^{*}\in(0,R_{D})~\text{such that}~\tilde{f}(D,r_{*})=1~\text{and for any}~r\in(0,r^{*}], \tilde{f}'(D,r)\geq0\big\}.\notag
\end{aligned}
\eer
Clearly,
\ber
\mathcal{D}^{-}\cap\mathcal{D}^{0}=\mathcal{D}^{0}\cap\mathcal{D}^{+}=\mathcal{D}^{-}\cap\mathcal{D}^{+}=\emptyset, \mathcal{D}^{-}\cup\mathcal{D}^{0}\cup\mathcal{D}^{+}=(0,\infty).
\eer

\begin{lemma}
Let $(f(r),a(r),g(r))$ be given in Proposition \ref{l1.3}. If $D_{0}\in\mathcal{D}^{0}$, then $\tilde{f}(D_{0},r)$ is a solution to the equation (\ref{4.1}) with $\tilde{f}(D_{0},0)=0$, $\tilde{f}(D_{0},\infty)=1$.
\end{lemma}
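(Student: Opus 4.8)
The plan is to mimic the structure used for $\tilde a$ in Lemma \ref{l3.5} and for $g$, adapting it to equation (\ref{4.1}). We must show only that $\tilde f(D_0,r)\to 1$ as $r\to\infty$; the conditions $\tilde f(D_0,0)=0$ and the ODE are already in hand from $D_0\in\mathcal D^0$. First I would observe that, since $D_0\in\mathcal D^0$, the function $\tilde f(D_0,\cdot)$ is nondecreasing and bounded in $(0,1)$, hence (after ruling out the constant case via the equation, exactly as in the paragraph following Lemma \ref{2.0}) strictly increasing, so the limit $L=\lim_{r\to\infty}\tilde f(D_0,r)$ exists with $0<L\le 1$. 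The goal is to exclude $L<1$.

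Next I would write (\ref{1.3}) in the divergence form $(r\tilde f')' = r\tilde f\big(\tfrac{a^2}{r^2}+\tfrac12(\tilde f^2+g^2-1)\big)$. Suppose for contradiction $L<1$. Using the decay estimates $a(r)=O(r^{1/2}e^{-\beta(1-\varepsilon)r})$ from Lemma \ref{l6.2} and $g(r)=O(e^{-\alpha(1-\varepsilon)r})$ from Lemma \ref{l4.4}, both $a^2/r^2$ and $g^2$ tend to $0$ as $r\to\infty$; hence for $r$ large the bracket is close to $\tfrac12(L^2-1)<0$, so there is a constant $c_1>0$ and $R_1$ with $(r\tilde f'(D_0,r))' \le -c_1 r$ for $r>R_1$. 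Integrating from $R_1$ to $r$ forces $r\tilde f'(D_0,r)\to -\infty$, hence $\tilde f'(D_0,r)<0$ for large $r$, contradicting $D_0\in\mathcal D^0$ (which requires $\tilde f'\ge 0$ for all $r>0$). Therefore $L=1$, i.e. $\tilde f(D_0,\infty)=1$.

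An alternative, perhaps cleaner, route avoids differentiating twice: from $(r\tilde f')' \le -c_1 r$ on $(R_1,\infty)$ one gets $r\tilde f'(D_0,r) \le r\tilde f'(D_0,R_1)$ constant minus $\tfrac{c_1}{2}(r^2-R_1^2)$, which is eventually negative, giving the same contradiction; I would present it this way to parallel the estimate $\tilde a(B_0,r_2)-\tilde a(B_0,r_1)\ge \tfrac{c_0}{2}(r_2^2-r_1^2)$ used in Lemma \ref{l3.5}. The main obstacle is simply making sure the perturbation terms $a^2/r^2$ and $g^2$ are genuinely negligible at infinity; this is where the earlier asymptotic lemmas (\ref{l6.2}) and (\ref{l4.4}) are essential, and one should note that those lemmas were proved for the fixed input $f$ and the resulting $a,g$, which is exactly the setting of Proposition \ref{l1.3}, so there is no circularity. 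Everything else is a routine repetition of the monotonicity-plus-maximum-principle argument already established for $a$ and $g$.
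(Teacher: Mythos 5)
Your proof is correct and takes essentially the same route as the paper: assume the limit $L<1$, use the vanishing of $a$ and $g$ at infinity to make the coefficient of $\tilde f$ in (\ref{4.1}) strictly negative for large $r$, and deduce that $\tilde f'$ must eventually become negative, contradicting $D_0\in\mathcal{D}^{0}$. Your integration of the divergence form $(r\tilde f')'\le -c_1 r$ is in fact more explicit than the paper's one-line asymptotic statement $\tilde f''\sim\tfrac12(l^2-1)\tilde f$, but the underlying idea is identical.
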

\begin{proof}
For any $D_{0}\in\mathcal{D}^{0}$, it is easily seen that the limit $\lim\limits_{r\rightarrow\infty}\tilde{f}(D_{0},r)=l$ exists and $0<l\leq1$. We now prove $l=1$. Assuming that $l<1$, from $a(\infty)=0$, $g(\infty)=0$, we have
\ber
\tilde{f}''(D_{0},r)\sim\frac{1}{2}(l^{2}-1)\tilde{f}(D_{0},r),
\eer
which gives a contradiction with $D_{0}\in\mathcal{D}^{0}$.  Then the proof of the lemma is complete.
\end{proof}

\begin{lemma}\label{l4.0}
Let $f(r)$ be given in Proposition \ref{l1.1}, $a(r)$, $g(r)$ obtained in Proposition \ref{l1.1} and \ref{l1.2}, respectively. The set $\mathcal{D}^{-}$ and $\mathcal{D}^{+}$ are both open and non-empty.
\end{lemma}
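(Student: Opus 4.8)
The plan is to mimic the structure used in the proof of Lemma~\ref{2.0} for the $a$-equation, adapting it to the sign structure of (\ref{4.1}). To show $\mathcal{D}^{-}\neq\emptyset$, I would take $D$ very small. When $D\to 0^{+}$, the solution $\tilde f(D,r)$ stays small on any fixed bounded interval, so on such an interval the cubic term $\tilde f^{3}$ and the $g^{2}\tilde f$ term are negligible and (\ref{4.1}) is governed by its linear part $\tilde f''+\frac1r\tilde f'-\frac{1}{r^{2}}\tilde f=\frac{1}{r^{2}}(a^{2}-1)\tilde f-\frac12\tilde f$, whose potential coefficient $\frac{1}{r^{2}}(1-a^{2})+\frac12$ is strictly positive near $r=0$ (since $a(0)=1$ gives $\frac{1}{r^2}(1-a^2)$ bounded, but more to the point $1-a^2>0$ for $r>0$ and the $+\frac12$ dominates). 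Rewriting as $(r\tilde f')'=r\tilde f\big(\frac{1}{r^{2}}+\frac{1}{r^{2}}(a^{2}-1)+\frac12(\tilde f^{2}+g^{2}-1)\big)$ and noting that for small $r$ the bracket is positive while $\tilde f>0$, I would argue that if $\tilde f$ remained below $1$ with $\tilde f'\ge 0$ for all $r$ it would have to grow without bound (the same convexity/blow-up argument as in Lemma~\ref{l3.5} and Lemma~\ref{2.0}): using $a(\infty)=0$, $g(\infty)=0$ the bracket tends to $\frac12(\tilde f^2-1)<0$ only if $\tilde f<1$, which forces $\tilde f'$ to eventually turn negative unless $\tilde f$ exceeds $1$ first. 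Tracking the first critical point $r_{0}$ of $\tilde f(D_{1},r)$ for $D_{1}$ small: it is a local max, and $\tilde f(D_{1},r_{0})=1$ is excluded by ODE uniqueness (it would force $\tilde f\equiv$ the constant solution, contradicting $\tilde f(0)=0$), while $\tilde f(D_{1},r_{0})>1$ is excluded because at an interior maximum with value $>1$ equation (\ref{4.1}) forces $\tilde f''(r_0)>0$ — contradiction; hence $\tilde f(D_{1},r_{0})<1$ and just past $r_{0}$ we have $\tilde f'<0$ with $0<\tilde f<1$ on $(0,r_{0}]$, so $D_{1}\in\mathcal{D}^{-}$.

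For $\mathcal{D}^{+}\neq\emptyset$ I would take $D$ large. Since $\tilde f(D,r)=Dr+O(r^{3})$ near $0$, for large $D$ the solution reaches the value $1$ at a small radius $r^{*}\approx 1/D$ before the correction terms can act. Concretely, I would rescale $r=D^{-1}t$, set $\hat f(t)=\tilde f(D^{-1}t)$, and check that the rescaled equation converges as $D\to\infty$ to the limiting problem $\hat f''+\frac1t\hat f'-\frac1{t^{2}}\hat f=0$ with $\hat f(t)\sim t$ as $t\to 0$, whose solution is $\hat f(t)=t$; using continuous dependence, for $D$ large enough $\hat f(D,t)$ reaches $1$ at some $t^{*}$ near $1$ with $\hat f'>0$ up to that point, i.e. $\tilde f(D,\cdot)$ reaches $1$ with nonnegative derivative throughout $(0,r^{*}]$, so $D\in\mathcal{D}^{+}$. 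This step requires that the error terms $\frac{1}{r^{2}}(a^{2}-1)$, $\frac12(\tilde f^{2}+g^{2}-1)$, after rescaling, are uniformly controlled; the bound $|a^{2}-1|\le 2M_{2}r^{2}$ for $r\le 1$ from Proposition~\ref{l1.1} handles the singular-looking term ($\frac{1}{r^2}|a^2-1|\le 2M_2$), and $|g|$, $|\tilde f|$ are bounded on the relevant small interval, so the rescaled right-hand side is $O(D^{-2})$ times bounded quantities.

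Openness of $\mathcal{D}^{-}$ and $\mathcal{D}^{+}$ follows, as in Lemma~\ref{2.0}, from continuous dependence of $\tilde f(D,r)$ and $\tilde f'(D,r)$ on the parameter $D$ (standard ODE theory, valid since the nonlinearity is smooth). For $\mathcal{D}^{-}$: if $D_{0}\in\mathcal{D}^{-}$ with witness radius $r_{*}$ where $\tilde f'(D_{0},r_{*})<0$ and $0<\tilde f(D_{0},r)<1$ on $(0,r_{*}]$, then these are strict open conditions on a compact set $[\,\varepsilon,r_{*}\,]$ (the behavior near $0$ being controlled uniformly by the expansion $\tilde f=Dr+O(r^{3})$ with locally uniform constants), so they persist for $D$ near $D_{0}$. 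Similarly for $\mathcal{D}^{+}$, where $\tilde f(D_{0},r^{*})=1$ is a transversal crossing (we may take $\tilde f'(D_{0},r^{*})>0$, since $\tilde f'(D_0,r^*)=0$ together with $\tilde f(D_0,r^*)=1$ forces the constant solution by uniqueness, again a contradiction), so the crossing radius varies continuously and $\tilde f'(D,\cdot)\ge 0$ on the corresponding interval persists.

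The main obstacle I anticipate is the $\mathcal{D}^{-}$ case, specifically making rigorous the claim that for small $D$ the solution cannot stay monotone increasing and bounded by $1$ forever: one must combine the small-$D$ linear regime near the origin (to get $\tilde f'<0$ or at least a first critical point at finite radius) with the large-$r$ behavior where $a,g\to 0$ and the coefficient $\frac12(\tilde f^{2}-1)$ becomes negative, forcing convexity of the wrong sign; the competing $\frac1{r^2}(1-a^2)$ term and the $\frac1r\tilde f'$ damping term must be shown not to rescue monotonicity. The cleanest route is probably to argue by contradiction assuming $D_1\in\mathcal D^0\cup\mathcal D^+$ for all small $D_1$ is false only after first establishing, via the Sturm-type comparison already used for $g$ in Lemma~\ref{l3.1}, that $\tilde f(D_1,\cdot)$ must have a critical point at finite $r$, then classifying that critical point as above. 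Everything else is a direct transcription of the $a$-equation arguments with the roles of the signs interchanged.
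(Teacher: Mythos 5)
Your proposal is essentially the paper's proof: the rescaling $t=Dr$ with the bound $|a^2-1|/r^2\le 2M_2$ for $\mathcal{D}^+$, continuous dependence on $D$ for openness, and — for $\mathcal{D}^-$ — the Sturm-type comparison that you defer to at the end is precisely what the paper does, comparing the linearized equation (\ref{4.7}) against the Bessel equation (\ref{4.8}) to force a zero of $\tilde f$ at finite radius within the small-$D$ regime. One caution: the claim in your primary $\mathcal{D}^-$ argument that staying below $1$ with $\tilde f'\ge 0$ ``forces $\tilde f'$ to eventually turn negative'' is false as stated — solutions with $D\in\mathcal{D}^0$ do exactly that (they increase to $1$), and the sign $(r\tilde f')'<0$ only says $r\tilde f'$ decreases, not that it becomes negative. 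So the Sturm oscillation step you label as the ``cleanest route'' is not an optional refinement but the essential ingredient; with it in place, your classification of the first critical point (value $=1$ excluded by uniqueness, value $>1$ excluded by the sign of $\tilde f''$ at an interior maximum) is correct and matches the spirit of Lemma \ref{2.0}.
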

\begin{proof}
If $D=0$, then clearly $\tilde{f}=0$ for all $r>0$ by iteration. Using the continuous dependence of $\tilde{f}$ on $D$, $\tilde{f}(D_{1},r)$ is also small for any bounded range of $r$ when $D_{1}>0$ is close to 0. In such a range, the term $\tilde{f}^{3}$ in (\ref{4.1}) can be negligible, so that (\ref{4.1}) becomes effectively
\ber\label{4.7}
\tilde{f}''+\frac{1}{r}\tilde{f}'+\left(\frac{1}{2}(1-g^{2})-\frac{a^{2}}{r^{2}}\right)\tilde{f}=0.
\eer
Let us consider the Bessel equation
\ber\label{4.8}
h_{1}''+\frac{1}{r}h'_{1}+\left(\alpha^{2}-\frac{1}{r^{2}}\right)h_{1}=0,
\eer
with $h_{1}\sim r$ as $r\rightarrow0$, and rewrite the equation (\ref{4.7}) and (\ref{4.8}) as
\begin{align}
(rh'_{1})'+\left(\alpha^{2}-\frac{1}{r^{2}}\right)rh_{1}&=0,\label{4.9}\\
(r\tilde{f}')'+\left(\frac{1}{2}(1-g^{2})-\frac{a^{2}}{r^{2}}\right)r\tilde{f}&=0.\label{4.10}
\end{align}
Since $0<a(r)<1$, $0\leq g(r)<\sqrt{1-2\alpha^{2}}$ for any $r>0$, we have
\ber
\alpha^{2}-\frac{1}{r^{2}}-\frac{1}{2}(1-g^{2})+\frac{a^{2}}{r^{2}}<0.
\eer
Applying the Sturm-Picone comparison theorem to (\ref{4.9}) and (\ref{4.10}),  we obtain that $\tilde{f}(r)$ have more zero points than $h(r)$.
Then there exists $r_{*}>0$ such that $\tilde{f}'(D_{1},r_{*})<0$ and for any $r\in(0,r_{*})$, $0<\tilde{f}(D_{1},r)<1$. Therefore $\mathcal{D}^{-}$ is non-empty.

In order to prove $\mathcal{D}^{+}\neq\emptyset$, we introduce a transformation to consider a modified variable $t=Dr$. Writing $\hat{f}(t)=\tilde{f}(D^{-1}t)$, $\hat{a}(t)=a(D^{-1}t)$ and $\hat{g}(t)=g(D^{-1}t)$, the new variables recast (\ref{4.1}) into the form
\ber\label{4.12}
\hat{f}''+\frac{1}{t}\hat{f}'-\frac{1}{t^{2}}\hat{f}=\frac{1}{t^{2}}(\hat{a}^{2}-1)+\frac{1}{2D^{2}}(\hat{f}^{2}+\hat{g}^{2}-1),~t\in[0,R_D D)
\eer
Since $1-a(r)\leq M_{2}r^{2}$, $0\leq r\leq1$, we arrive at
\ber
1-\hat{a}(t)\leq\frac{M_{2}t^{2}}{D^{2}},~0\leq t\leq D.
\eer
Then
\ber
\left|\frac{\hat{a}^{2}-1}{t^{2}}\right|\leq\frac{2(1-\hat{a})}{t^{2}}\leq\frac{2M_{2}}{D^{2}},~0\leq t\leq D.
\eer
From (\ref{4.6}), we have $\delta_0 D\rightarrow1$ as $D\rightarrow\infty$. Thus, letting $D\rightarrow\infty$, we get
\begin{align}
\hat{f}''(t)+\frac{1}{t}\hat{f}'(t)-\frac{1}{t^{2}}\hat{f}(t)=0,~t\in[0,1],\\
\hat{f}(t)\sim t,t\rightarrow0.
\end{align}
The solution of the above equation is $\hat{f}(t)=t$. Thus, for small value of $\frac{2M_{2}}{D^{2}}$, $\hat{f}(t)=1$ at $t_{0}=1$ and $\hat{f}'(t_{0})>0$, which implies that $\mathcal{D}^{+}$ is non-empty. Using the continuous dependence of $\tilde{f}$ and $\tilde{f}'$ on the parameter $D$, $\mathcal{D}^{-}$ and $\mathcal{D}^{+}$ are open set.
\end{proof}

By connectedness, there must be some value of $D\in\mathcal{D}^{0}$. Indeed, $\tilde{f}$ strictly increases in $(0,\infty)$. If $\tilde{f}$ is nondecreasing, the existence of $0<r_1<r_2<\infty$ would make $\tilde{f}$ a constant over the interval $[r_1, r_2]$. This contradicts the uniqueness theorem for ODEs. Consequently, for any $D\in\mathcal{S}^{D}_{2}$,
\ber
0<\tilde{f}(D,r)<1,~\tilde{f}'(D,r)>0,~r>0.
\eer

\begin{lemma}
Let $f(r)$ be given in Proposition \ref{l1.1}, $a(r)$, $g(r)$ obtained in Proposition \ref{l1.1} and \ref{l1.2}, respectively. If $D_{0}\in\mathcal{D}^{0}$, then for any $r\leq1$,
\ber
\tilde{f}(D_{0},r)\leq M_{3}r,
\eer
where $M_{3}$ is a positive constant.
\end{lemma}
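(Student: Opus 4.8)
The plan is to run the same Picard-type a priori estimate already used in the local-existence lemma, but now anchoring the constant $D_0$ uniformly from above by exploiting that $D_0\notin\mathcal{D}^+$. First I would recall from the proof of Lemma \ref{l4.0} that there is an $N>0$ large enough so that whenever $\frac{2M_2}{D^2}<\frac1N$ the rescaled profile $\hat{f}(t)$ reaches the value $1$ at some $t^*$ with $\hat{f}'(t^*)>0$; translating back, this means $D>\sqrt{2M_2 N}$ forces $D\in\mathcal{D}^+$. Since $D_0\in\mathcal{D}^0$ and the three sets are disjoint, we must have $D_0\leq\sqrt{2M_2 N}=:D_*$, a bound independent of $r$.

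Next I would insert $0<\tilde{f}(D_0,r)<1$, $0\le g(r)<\sqrt{1-2\alpha^2}\le 1$, and $|a(r)-1|\le M_2 r^2$ (from Proposition \ref{l1.1}) directly into the integral equation \eqref{4.2}. On $0\le s\le r\le 1$ one has $\frac{|a^2(s)-1|}{s^2}\le\frac{2(1-a(s))}{s^2}\le 2M_2$ and $\bigl|\tilde f^2+g^2-1\bigr|\le 2$, so the integrand is bounded by $r\bigl(1-\frac{s^2}{r^2}\bigr)\tilde f(s)\cdot(2M_2+1)$. Using $\tilde f(s)\le 1$ crudely, or better the already-known local bound $\tilde f(s)\le 2D_0 s$ near $0$, the integral is $O(r^3)$, giving
\begin{equation}
\tilde{f}(D_0,r)\le D_0 r+\tfrac{1}{2}(2M_2+1)\!\int_0^r r\Bigl(1-\tfrac{s^2}{r^2}\Bigr)\,2D_0 s\,\mathrm{d}s\le D_0 r+\tfrac{(2M_2+1)D_0}{4}r^3\le M_3 r
\end{equation}
for $r\le 1$, with $M_3:=D_*\bigl(1+\tfrac{2M_2+1}{4}\bigr)=\sqrt{2M_2 N}\bigl(1+\tfrac{2M_2+1}{4}\bigr)$. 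Strictly, one should phrase the estimate on $\tilde f(s)$ self-consistently (e.g. via a continuity/bootstrap argument on the set where $\tilde f(s)\le 2D_0 s$), exactly as in the local existence lemma, but this is routine once the uniform bound $D_0\le D_*$ is in hand.

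The only genuine obstacle is the first step: producing the $r$-independent upper bound on the admissible shooting slope $D_0$. This is not automatic from $D_0\in\mathcal{D}^0$ alone; it requires the quantitative "large-$D$ forces crossing" statement, which is precisely the content extracted from the rescaling argument $t=Dr$ in Lemma \ref{l4.0}. Once that is cited, everything else is a direct substitution into \eqref{4.2} together with the pointwise bounds on $a$, $g$, $\tilde f$ already established, so the remainder of the proof is mechanical.
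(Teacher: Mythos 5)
Your proposal is correct and follows essentially the same route as the paper: first extract the uniform bound $D_{0}\leq(2M_{2}N)^{\frac{1}{2}}$ from the fact that large $D$ forces membership in $\mathcal{D}^{+}$ (the rescaling argument of Lemma \ref{l4.0}), then substitute the pointwise bounds on $a$, $g$, $\tilde{f}$ into the integral equation (\ref{4.2}). The paper simply uses the crude bound $\tilde{f}\leq1$ (available globally since $D_{0}\in\mathcal{D}^{0}$, so no bootstrap is needed) to get $\tilde{f}(D_{0},r)\leq D_{0}r+(M_{2}+\tfrac{1}{2})r^{2}\leq M_{3}r$ for $r\leq1$, which is a minor variant of your estimate.
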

\begin{proof}
From the proof of Lemma \ref{l4.0}, there exists some constant, still denoted by $N$, such that if $\frac{2M_{2}}{D^{2}}<\frac{1}{N}$, then $D\in\mathcal{D}^{+}$. Since $D_0\notin\mathcal{D}^{+}$, it must satisfy
\ber\label{4.13}
D_{0}\leq(2M_{2}N)^{\frac{1}{2}}.
\eer
Using $|a(r)-1|\leq M_{2}r$ for any $r\leq1$, we have
\begin{align}
\tilde{f}(D_{0},r)
&\leq D_{0}r+\frac{r}{2}\int_{0}^{r}(2M_{2}+1)\mathrm{d}s\notag\\
&\leq D_{0}r+(2M_{2}+1)\frac{r^2}{2}\notag\\
&\leq M_{3}r,~r\leq1,\notag
\end{align}
where $M_{3}=(2M_{2}N)^{\frac{1}{2}}+(M_{2}+\frac{1}{2})$. Then the lemma follows.
\end{proof}

\begin{lemma}\label{l5.5}
Let $f(r)$ be given in Proposition \ref{l1.1}, $a(r)$, $g(r)$ obtained in Proposition \ref{l1.1} and \ref{l1.2}, respectively. There holds the asymptotic estimate
\ber\label{4.16}
\tilde{f}(r)=1+O(\exp{(-\gamma(1-\varepsilon)r)}),~~r\rightarrow\infty,
\eer
where $\gamma=\min\{1,2\alpha,2\beta\}$.
\end{lemma}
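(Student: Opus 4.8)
The plan is to prove the asymptotic decay of $\tilde f-1$ by the same comparison‐function / maximum‐principle strategy already used for $a(r)$ in Lemma \ref{l6.2} and for $g(r)$ in Lemma \ref{l4.4}. Introduce the shifted field $w(r)=1-\tilde f(r)$, so that $w>0$, $w(\infty)=0$, and from (\ref{4.1}) $w$ satisfies
\begin{equation}
w''+\frac{1}{r}w'-\frac{1}{r^{2}}w=\Big(\frac{a^{2}}{r^{2}}-1\Big)(1-w)+\frac{1}{2}\big((1-w)^{2}+g^{2}-1\big)(1-w)-\frac{1}{r}\cdot\frac{1}{r}+\cdots
\end{equation}
more usefully, linearising near $\tilde f=1$, one gets $w''+\frac1r w'=\big(1+o(1)\big)w+\frac{a^{2}}{r^{2}}(1-w)+\frac12 g^{2}(1-w)$ as $r\to\infty$, since $\frac12(\tilde f^{2}+g^{2}-1)\tilde f=\frac12\big((1-w)^2+g^2-1\big)(1-w)=-(1+o(1))w+O(g^2)$ and $\frac1{r^2}(a^2-1)\tilde f\to 0$. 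The point is that the coefficient of $w$ on the right tends to $1=\gamma^{2}/\gamma^{2}$... more precisely the natural decay rate of the homogeneous part is $1$, while the inhomogeneous source terms $a^{2}/r^{2}$ and $g^{2}/2$ decay like $\exp(-2\beta(1-\varepsilon')r)$ and $\exp(-2\alpha(1-\varepsilon')r)$ respectively by Lemmas \ref{l6.2} and \ref{l4.4}. Hence every term on the right-hand side is $O(\exp(-\gamma(1-\varepsilon')r))$ for $\gamma=\min\{1,2\alpha,2\beta\}$ and any $\varepsilon'\in(0,1)$.

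Concretely, I would fix $\varepsilon\in(0,1)$, choose $\varepsilon'$ slightly smaller, and set the comparison function
\begin{equation}
\zeta(r)=c\exp\big(-\gamma(1-\varepsilon)r\big),
\end{equation}
with $c>0$ to be chosen. A direct computation gives $\zeta''+\frac1r\zeta'=\big(\gamma^{2}(1-\varepsilon)^{2}-\frac{\gamma(1-\varepsilon)}{r}\big)\zeta$. Subtracting, one obtains an inequality of the form
\begin{equation}
(w-\zeta)''+\frac{1}{r}(w-\zeta)'\geq \gamma^{2}(1-\varepsilon)^{2}(w-\zeta)
\end{equation}
valid for all $r>r_{\varepsilon}$, once $r_{\varepsilon}$ is taken large enough that the collected error terms — namely $\big(\text{coeff of }w\big)-\gamma^{2}(1-\varepsilon)^{2}$ times $w$, which is positive since the coefficient tends to $1\ge\gamma^{2}>\gamma^{2}(1-\varepsilon)^{2}$, plus the positive quantity $\frac{\gamma(1-\varepsilon)}{r}\zeta$, plus the source terms $\frac{a^{2}}{r^{2}}(1-w)+\frac12 g^{2}(1-w)$ which are nonnegative for $r$ large since $0<\tilde f<1$ — all have the correct sign. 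Then pick $c$ large enough that $(w-\zeta)(r_{\varepsilon})\le 0$; since also $(w-\zeta)(r)\to 0$ as $r\to\infty$, the maximum principle applied on $(r_{\varepsilon},\infty)$ to the last displayed inequality yields $w(r)\le\zeta(r)$ for all $r>r_{\varepsilon}$, which is exactly (\ref{4.16}).

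The main obstacle is bookkeeping the sign of the residual terms when one replaces the exact nonlinearity $\frac12(\tilde f^{2}+g^{2}-1)\tilde f$ by its linearisation: one must verify that $-\frac12(\tilde f^2+g^2-1)\tilde f \ge (1-\delta)\,w$ for $r$ large (so that the effective coefficient of $w$ on the right exceeds $\gamma^2(1-\varepsilon)^2$), using $\tilde f\to 1$, $g\to 0$, and the elementary factorisation $1-\tilde f^2=(1-\tilde f)(1+\tilde f)$ together with $\tilde f^{2}+g^{2}<1$ from Theorem \ref{t1}(ii) (or simply $0<\tilde f<1$, $g\ge 0$). The terms $\frac{1}{r^2}(a^2-1)\tilde f$ and $\frac{a^2}{r^2}(1-w)$ need a moment's care because of the $1/r^{2}$ factor, but since $a$ decays exponentially (Lemma \ref{l6.2}), $a^{2}/r^{2}$ is dominated by $\exp(-2\beta(1-\varepsilon)r)\le\exp(-\gamma(1-\varepsilon)r)$ for large $r$; likewise $g^{2}$ is controlled by $\exp(-2\alpha(1-\varepsilon)r)$ via Lemma \ref{l4.4}. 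Everything else is the routine maximum-principle argument already executed twice in the paper, so once the coefficient comparison is pinned down the proof closes immediately.
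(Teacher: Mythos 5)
Your proposal follows exactly the paper's route: substitute $\psi=1-\tilde f$, compare against $c\exp(-\gamma(1-\varepsilon)r)$, and close with the maximum principle on $(r_\varepsilon,\infty)$, just as in Lemmas \ref{l6.2} and \ref{l4.4}. One correction, though: writing the equation for $w=1-\tilde f$ from (\ref{4.1}) gives
\begin{equation*}
w''+\frac{1}{r}w'=\frac{1}{2}(2-w)(1-w)\,w-\frac{a^{2}}{r^{2}}(1-w)-\frac{1}{2}g^{2}(1-w),
\end{equation*}
so the inhomogeneous terms enter with a \emph{minus} sign and are negative (since $0<\tilde f<1$), not ``nonnegative'' as you assert in the sign bookkeeping; if they really were nonnegative one could take $\gamma=1$ outright and the minimum with $2\alpha,2\beta$ would be superfluous. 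The argument is rescued by what you say in your final paragraph: these negative terms are $O(r^{-1}e^{-2\beta(1-\varepsilon')r})$ and $O(e^{-2\alpha(1-\varepsilon')r})$ by Lemmas \ref{l6.2} and \ref{l4.4} (taken with a slightly smaller $\varepsilon'<\varepsilon$), hence for large $r$ they are absorbed by the positive terms $\bigl(\tfrac12(2-w)(1-w)-\gamma^{2}(1-\varepsilon)^{2}\bigr)w+\tfrac{\gamma(1-\varepsilon)}{r}\zeta$ precisely because $\gamma\le\min\{2\alpha,2\beta\}$. This absorption step is the only place where the definition of $\gamma$ is actually used, and it is also the step the paper itself leaves implicit; with the sign fixed, your proof is the paper's proof.
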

\begin{proof}
Set $\psi(r)=1-\tilde{f}(r)$. Obviously, $\psi(r)>0$ for all $r>0$ and $\psi(r)$ satisfies the following equation
\begin{equation}
\psi''+\frac{1}{r}\psi'=\left(\frac{a^{2}}{r^{2}}+\frac{1}{2}(2-\psi)(1-\psi)
+\frac{1}{2}g^{2}\right)\psi-\frac{a^{2}}{r^{2}}-\frac{g^{2}}{2}.\nn\\
\end{equation}
Now we introduce a comparison function
\begin{equation}\label{4.17}
\eta(r)=c\exp{(-\gamma(1-\varepsilon)r)},
\end{equation}
where $c>0$ and $\gamma=\min\{1,2\alpha,2\beta\}$.
Then,
\begin{align}
(\psi-\eta)''+\frac{1}{r}(\psi-\eta)'=&\left(\frac{a^{2}}{r^{2}}+\frac{1}{2}(2-\psi)(1-\psi)+\frac{1}{2}g^{2}-\gamma^{2}(1-\varepsilon)^{2}\right)\psi\notag\\
&+\gamma^{2}(1-\varepsilon)^2(\psi-\eta)+\frac{\gamma(1-\varepsilon)}{r}\eta-\frac{a^{2}}{r^{2}}-\frac{1}{2}g^{2}.\notag
\end{align}
Using the estimate for $a(r)$, $g(r)$ stated in (\ref{2.19}) and (\ref{3.11}), we see that there is a sufficiently large $r_{\varepsilon}>0$ such that
\ber
\left(\frac{a^{2}}{r^{2}}+\frac{1}{2}(2-\psi)(1-\psi)+\frac{1}{2}g^{2}-\gamma^{2}(1-\varepsilon)^{2}\right)\psi
+\frac{\gamma(1-\varepsilon)}{r}\eta-\frac{a^{2}}{r^{2}}-\frac{1}{2}g^{2}\geq0\notag
\eer
for any $r>r_{\varepsilon}$. So we get
\begin{equation}\label{4.18}
(\psi-\eta)''+\frac{1}{r}(\psi-\eta)'\geq\gamma^{2}(1-\varepsilon)^2(\psi-\eta), r>r_{\varepsilon}.
\end{equation}
Choose the coefficient $c$ in (\ref{4.17}) large enough to make $(\psi-\eta)(r_{\varepsilon})\leq0$. Since $(\psi-\eta)\rightarrow0$ as $r\rightarrow\infty$, applying the maximum principle in (\ref{4.18}), we obtain that $\psi(r)<\eta(r)$ for all $r>r_{\varepsilon}$.
\end{proof}

\begin{lemma}
Let $f(r)$ be given in Proposition \ref{l1.1}, $a(r)$, $g(r)$ obtained in Proposition \ref{l1.1} and \ref{l1.2}, respectively. The solution to problem (\ref{4.1}) with the boundary condition $\tilde{f}(0)=0$ and $\tilde{f}(\infty)=0$ is unique.
\end{lemma}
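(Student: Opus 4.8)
The plan is to argue by contradiction, assuming two solutions $\tilde f_1$ and $\tilde f_2$ of (\ref{4.1}) both satisfying $\tilde f_i(0)=0$, $\tilde f_i(\infty)=1$ (note the statement's ``$\tilde f(\infty)=0$'' is a typo for the boundary condition $\tilde f(\infty)=1$ appearing in Proposition \ref{l1.3}), and to derive from them an integral identity of Bartnik--McLeod type whose sign is inconsistent unless $\tilde f_1\equiv\tilde f_2$. The natural substitution is to work with the quotient $\tilde f_i/r$ or, more directly, to mimic the uniqueness proof already given for $g(r)$: rewrite (\ref{1.3}) in the divergence form
\begin{equation}
\frac{(r\tilde f_i')'}{\tilde f_i}=\frac{a^{2}}{r}+\frac{r}{2}\bigl(\tilde f_i^{2}+g^{2}-1\bigr),\notag
\end{equation}
subtract the two copies, and note that the $a$- and $g$-dependent terms that do \emph{not} involve $\tilde f_i$ cancel because $a(r)$ and $g(r)$ are the \emph{same} fixed functions for both solutions. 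This leaves
\begin{equation}
\frac{(r\tilde f_1')'}{\tilde f_1}-\frac{(r\tilde f_2')'}{\tilde f_2}=\frac{r}{2}\bigl(\tilde f_1^{2}-\tilde f_2^{2}\bigr).\notag
\end{equation}

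Next I would multiply this identity through by $\tilde f_2^{2}-\tilde f_1^{2}$ and integrate over $(0,\infty)$. Integrating the left-hand side by parts and regrouping (the standard Bartnik--McLeod manipulation, exactly as cited from \cite{bh} in the uniqueness lemma for $g$) produces
\begin{equation}
\int_{0}^{\infty} r\left\{\Bigl(\tilde f_1'-\frac{\tilde f_1\tilde f_2'}{\tilde f_2}\Bigr)^{2}+\Bigl(\tilde f_2'-\frac{\tilde f_2\tilde f_1'}{\tilde f_1}\Bigr)^{2}\right\}\mathrm{d}r=-\frac{1}{2}\int_{0}^{\infty} r\bigl(\tilde f_1^{2}-\tilde f_2^{2}\bigr)^{2}\mathrm{d}r.\notag
\end{equation}
The left side is manifestly $\geq 0$ and the right side is manifestly $\leq 0$, so both vanish; in particular $\tilde f_1^{2}\equiv\tilde f_2^{2}$, and since both are positive on $(0,\infty)$ we conclude $\tilde f_1\equiv\tilde f_2$.

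The step requiring genuine care — the main obstacle — is justifying the integration by parts, i.e. showing that the boundary terms at $r=0$ and $r=\infty$ vanish and that all the integrals converge. At $r=0$ one uses the expansion $\tilde f_i(r)=D_i r+O(r^{3})$ from (\ref{4.3}), which gives $r\tilde f_i'(r)\to 0$ and keeps the ratios $\tilde f_1/\tilde f_2$, $\tilde f_1'/\tilde f_2'$ bounded near the origin, so the integrand is integrable there; at $r=\infty$ one uses the exponential decay $1-\tilde f_i(r)=O(\exp(-\gamma(1-\varepsilon)r))$ from Lemma \ref{l5.5} (equation (\ref{4.16})), together with the corresponding decay of $\tilde f_i'$ obtained by differentiating the equation, to kill the boundary contribution and ensure $\int^{\infty} r(\tilde f_1^2-\tilde f_2^2)^2\,\mathrm{d}r<\infty$. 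I would state these two asymptotic inputs explicitly before doing the formal computation, then carry out the by-parts step, and finally read off the contradiction from the opposite signs of the two sides.
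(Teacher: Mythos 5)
Your proposal is correct and follows essentially the same route as the paper's own proof: the same divergence-form rewriting of (\ref{4.1}), subtraction of the two copies, multiplication by $\tilde f_2^{2}-\tilde f_1^{2}$, and the resulting Brezis--Oswald-type integral identity, with the boundary terms controlled by the asymptotics (\ref{4.3}) and (\ref{4.16}). (Incidentally, you correctly flag the typo $\tilde f(\infty)=0$, and your coefficient $a^{2}/r$ in the divergence identity is the right one, where the paper misprints $(1-a)^{2}/r$.)
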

\begin{proof}
Suppose that there are two different solutions $\tilde{f}_{1}$ and $\tilde{f}_{2}$ satisfying
\ber
\tilde{f}_{1}(0)=\tilde{f}_{2}(0)=0,~~~\tilde{f}_{1}(\infty)=\tilde{f}_{2}(\infty)=1.\nn
\eer
Let us rewrite the equation (\ref{4.1}) as follows
\begin{equation}
\frac{(r\tilde{f}')'}{\tilde{f}}=\frac{(1-a)^{2}}{r}+\frac{1}{2}(\tilde{f}^{2}+g^{2}-1)r.\nn
\end{equation}
Then we have
\ber\label{3.9}
\frac{(r\tilde{f}_{1}')'}{\tilde{f}_{1}}-\frac{(r\tilde{f}_{2}')'}{\tilde{f}_{2}}=\frac{1}{2}r(\tilde{f}_{1}^{2}-\tilde{f}_{2}^{2}).
\eer
Multiplying (\ref{3.9}) by $\tilde{f}_{2}^{2}-\tilde{f}_{1}^{2}$ and integrating over $(0,\infty)$, we obtain the identity
\begin{equation}\label{3.10}
\int_{0}^{\infty}r\left\{\left(\tilde{f}'_{1}-\frac{\tilde{f}_{1}\tilde{f}'_{2}}{\tilde{f}_{2}}\right)^{2}+\left(\tilde{f}'_{2}-\frac{\tilde{f}_{2}\tilde{f}'_{1}}{\tilde{f}_{1}}\right)^{2}\right\}\mathrm{d}r\\
=-\frac{1}{2}\int_{0}^{\infty}r(\tilde{f}_{1}^{2}-\tilde{f}_{2}^{2})^{2}\mathrm{d}r.
\end{equation}
where we have used (\ref{4.3}) and (\ref{4.16}). It is clear that $\tilde{f}_{1}=\tilde{f}_{2}$ for all $r>0$.
\end{proof}

\section{The proof of Theorem 1.1}
In this section, we complete the proof of Theorem \ref{t1} by the Schauder fixed point theorem.

Firstly, we define the space $\mathrm{X}$ as follows
\ber
\mathrm{X}=\big\{f(r)\in C[0,\infty)|~r^{-k}(1+r^{k})f(r)~\text{bounded},0<k<1\big\},\notag
\eer
equipped with the norm
\ber
\|f\|_{\mathrm{X}}=\sup_{r\in[0,\infty)}\{|r^{-k}(1+r^{k})f(r)|\}.
\eer
It is easy to check that $\mathrm{X}$ is indeed a Banach space.
We denote the subset $\mathcal{S}$ of $\mathrm{X}$ by
\ber
\mathcal{S}=\big\{f(r)\in\mathrm{X}|~|r^{-1}f|\leq M,~\text{for}~r\leq1;~f(r)~\text{is increasing};~f(\infty)=1;~0\leq f(r)\leq1\big\},\notag
\eer
where $M=\max\{M_{1},M_{3}\}$. Here, $f(r)$ is chosen in accordance with Proposition \ref{l1.3}. It is straightforward to see that the set $\mathcal{S}$ is
non-empty, bounded, closed, and convex. The previous discussion allows us to define the mapping $T:f\mapsto\tilde{f}$ on $\mathrm{X}$.
\begin{lemma}\label{l5.1}
For the mapping $T:f\mapsto\tilde{f}$,
\item{(i)} The mapping $T$ takes $\mathcal{S}$ into itself;
\item{(ii)} $T$ is compact;
\item{(iii)} $T$ is continuous.
\end{lemma}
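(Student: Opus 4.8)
\textbf{Proof plan for Lemma \ref{l5.1}.}

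The plan is to verify the three properties of $T$ in turn, relying on the uniform constants $M_{1},M_{2},M_{3}$ produced in Propositions \ref{l1.1}--\ref{l1.3} together with the pointwise bounds $0<a<1$, $0\le g<\sqrt{1-2\alpha^{2}}$, $0<\tilde f<1$ and the asymptotic estimates already established. For \textbf{(i)}, given $f\in\mathcal S$, Propositions \ref{l1.1} and \ref{l1.2} produce $a(r)$ and $g(r)$ with the stated monotonicity and bounds, and Proposition \ref{l1.3} then produces $\tilde f=T(f)$ which is strictly increasing, satisfies $0<\tilde f<1$ on $(0,\infty)$, obeys $r^{-1}\tilde f\le M_{3}\le M$ for $r\le1$, and has $\tilde f(\infty)=1$. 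It remains only to check $\tilde f\in\mathrm X$, i.e.\ that $r^{-k}(1+r^{k})\tilde f(r)$ is bounded on $[0,\infty)$: near $r=0$ use $\tilde f(r)=Dr+O(r^{3})$ from \eqref{4.3} so $r^{-k}\tilde f\le r^{1-k}\cdot(\text{const})\to 0$, and for $r\ge1$ the factor $r^{-k}(1+r^{k})$ is bounded while $\tilde f\le1$. Hence $T(\mathcal S)\subseteq\mathcal S$.

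For \textbf{(ii)}, compactness, I would show $T(\mathcal S)$ is precompact in $\mathrm X$ via an Arzel\`a--Ascoli argument adapted to the weighted norm. The images $\tilde f$ are uniformly bounded; from the integral equation \eqref{4.2} (with $D=D_{0}\le(2M_{2}N)^{1/2}$ uniformly bounded by \eqref{4.13}) one differentiates to bound $\tilde f'(r)$ uniformly on compact subsets of $(0,\infty)$, and near $r=0$ one uses $\tilde f'(r)=D+O(r^{2})$ together with the uniform bound on $D_{0}$; this gives equicontinuity of $\{r^{-k}(1+r^{k})\tilde f(r)\}$ on $[0,R]$ for every $R$. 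To control the tail $r\to\infty$ uniformly, I would use the comparison-function argument of Lemma \ref{l5.5}: the barrier $\eta(r)=c\exp(-\gamma(1-\varepsilon)r)$ can be chosen with $c$ and $r_{\varepsilon}$ depending only on the uniform constants, so $|1-\tilde f(r)|$ decays uniformly, and since $r^{-k}(1+r^{k})\to1$, the weighted tails are uniformly small. Combining a uniformly convergent subsequence on $[0,R]$ with the uniform tail estimate yields a subsequence Cauchy in $\|\cdot\|_{\mathrm X}$.

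For \textbf{(iii)}, continuity, I would take $f_{n}\to f$ in $\mathcal S$ and show $T(f_{n})\to T(f)$. By continuous dependence on parameters/coefficients for ODEs, the associated $a_{n}\to a$ and $g_{n}\to g$ uniformly on compact sets, and the shooting constants $B_{0}^{(n)}$, $\lambda^{(n)}$, $D_{0}^{(n)}$ stay in bounded ranges (by the uniform bounds $B_{0}\le\beta^{2}N$, $\lambda\in I$, $D_{0}\le(2M_{2}N)^{1/2}$) so, along subsequences, converge; uniqueness in Propositions \ref{l1.1}--\ref{l1.3} forces the limits to be the ones associated to $f$, so the full sequences converge. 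Then $\tilde f_{n}\to\tilde f$ uniformly on compacts, and the uniform tail bound from part (ii) upgrades this to convergence in $\|\cdot\|_{\mathrm X}$. The main obstacle is part (ii): making the equicontinuity estimate near $r=0$ and, above all, the exponential tail estimate \emph{uniform over} $f\in\mathcal S$, which requires tracking that the constants $r_{\varepsilon}$, $c$ in the barrier functions depend only on $M$, $\alpha$, $\beta$ and not on the particular $f$ — once this uniformity is secured, Arzel\`a--Ascoli plus the weighted-tail control gives precompactness and the rest is routine. With Lemma \ref{l5.1} in hand, Schauder's fixed point theorem applied to $T:\mathcal S\to\mathcal S$ yields a fixed point $f=\tilde f$, which together with the corresponding $a,g$ solves \eqref{1.1}--\eqref{1.5}; the qualitative and asymptotic claims of Theorem \ref{t1} then follow from Propositions \ref{l1.1}--\ref{l1.3} and Lemmas \ref{l6.2}, \ref{l4.4}, \ref{l5.5}, completing the proof.
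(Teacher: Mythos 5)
Your proposal is correct and follows essentially the same route as the paper: part (i) is read off from Propositions \ref{l1.1}--\ref{l1.3}, part (ii) uses Arzel\`a--Ascoli on compact subintervals combined with the uniform bound $\tilde f\le Mr$ near $r=0$ and the exponential approach of $\tilde f$ to $1$ at infinity to control the weighted norm, and part (iii) splits $[0,\infty)$ into the same three regions. If anything you are more explicit than the paper, which silently assumes the uniformity over $f\in\mathcal S$ of the shooting parameter $D_0$ and of the constants $c$, $r_{\varepsilon}$ in the tail barrier (precisely the point you flag as the main obstacle), and whose continuity step you replace with the cleaner bounded-shooting-parameters-plus-uniqueness argument.
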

\begin{proof}
Part $(i)$ has already established in Proposition \ref{l1.1}, \ref{l1.2} and \ref{l1.3}.

We now concentrate on the proof of Part $(ii)$. For any $f\in S$, then $\tilde{f}$ satisfies (\ref{4.2}). It is straightforward to show that
\begin{equation}
\tilde{f}'_{n}(r)=D_{0}+\frac{1}{2}\int_{0}^{r}\left(1+\frac{s^{2}}{r^{2}}\right)\tilde{f}_{n}(s)\left\{\frac{1}{s^{2}}(a^{2}-1)+\frac{1}{2}(\tilde{f}_{n}^{2}+g^{2}-1)\right\}\mathrm{d}s\notag
\end{equation}
is bounded in any compact sub-interval of $(0,\infty)$. Using the mean value theorem, the sequence $\{\tilde{f}_{n}\}$ is equicontinuous in any compact sub-interval of $(0,\infty)$. Thus, by the Ascoli-Arzel\`{a} theorem, we may assume without loss of generality that $\{\tilde{f}_{n}\}$ converges uniformly to an element $\tilde{f}\in\mathcal{S}$ over any compact sub-interval of $(0,\infty)$ as $n\rightarrow\infty$. Consequently, for any $0<\delta<R<\infty$, $\tilde{f}_{n}\rightarrow\tilde{f}$ uniformly over $[\delta,R]$ as $n\rightarrow\infty$.

We only need to prove that when $r\rightarrow0$ and $r\rightarrow\infty$,
\ber
\|\tilde{f}_{n}-\tilde{f}\|_{X}\rightarrow0.
\eer
Since $|\tilde{f}_{n}|\leq Mr$, $|\tilde{f}|\leq Mr$ for any $r\leq1$, taking $0<\delta<1$, we have
\begin{equation}\label{4.14}
\sup_{r\in[0,\delta)}|r^{-k}(1+r^{k})(\tilde{f}_{n}-\tilde{f})|\leq2\sup_{r\in[0,\delta)}|r^{-k}(\tilde{f}_{n}-\tilde{f})|\leq4M\delta^{1-k},
\end{equation}
which approaches zero as $\delta\rightarrow0$.

On the other hang, since $\tilde{f}_{n}$ decaying exponentially fast at infinity, we see that for any $\varepsilon>0$, there exists $R>0$ large enough independent of $n$ such that
\ber
\sup_{r\in(R,\infty)}|r^{-k}(1+r^{k})(\tilde{f}_{n}-\tilde{f})|<\varepsilon.
\eer
Therefore $\{\tilde{f}_{n}\}$ is uniformly convergent to $\tilde{f}$, and the mapping $T$ is compact.

We now show that $T$ is continuous. It is sufficient to show that for any $f_{1}$, $f_{2}\in S$, if $\|f_{1}-f_{2}\|_{\mathrm{X}}\rightarrow0$, there holds
\ber
\|T(f_{1})-T(f_{2})\|_{\mathrm{X}}\rightarrow0.
\eer
Since $T(f_{1})(\infty)=T(f_{2})(\infty)$, then for any $\varepsilon>0$ there is a positive constant $R_{1}$ large enough such that
\ber
\sup_{r\in[R_{1},\infty)}|r^{-k}(1+r^{k})(T(f_{1})-T(f_{2}))|<\varepsilon.
\eer
For the given $\varepsilon$, in view of (\ref{4.14}), there exists $0<\delta_{1}<1$ such that
\ber
\sup_{r\in(0,\delta_{1})}|r^{-k}(1+r^{k})(T(f_{1})-T(f_{2}))|<\varepsilon.
\eer
Using the continuity of $T$ at $[\delta_1,R_1]$, we see that $T$ is continuous on $\mathrm{X}$.
\end{proof}

{\bf proof of Theorem \ref{t1}} According to Lemma \ref{l5.1}, the mapping $T$ satisfies the conditions of the
Schauder fixed point theorem. Consequently, we can find a solution pair $(a,g,f)$ to the mixed boundary value problem (\ref{1.1})-(\ref{1.5}).

In addition, from (\ref{1.2}) and (\ref{1.3}), $1-f^{2}-g^{2}$ satisfies
\ber
\begin{aligned}
&-(1-f^{2}-g^{2})''-\frac{1}{r}(1-f^{2}-g^{2})'+(g^{2}+f^{2})(1-f^{2}-g^{2})\nn\\
=&2\alpha^{2}g^{2}+\frac{2}{r^{2}}(1-a)^{2}f^{2}+2g'^{2}+2f'^{2},\notag\\
\end{aligned}
\eer
with boundary condition $(1-f^{2}-g^{2})(0)>0$, $(1-f^{2}-g^{2})(\infty)=0$.
Using the maximum principle we arrive at $f^{2}+g^{2}<1$, for all $r>0$. Moreover, the asymptotic estimates at infinity have been established in lemmas \ref{l6.2}, \ref{l4.4} and \ref{l5.5}. Then the Theorem \ref{t1} follows.

\end{document}